\theoremstyle{plain}
\newtheorem{thm}{\protect\theoremname}
\theoremstyle{plain}
\newtheorem{lem}[thm]{\protect\lemmaname}
\theoremstyle{plain}
\newtheorem{prop}{Proposition}
\newtheorem{definition}{Definition}
\theoremstyle{plain}
\providecommand{\keywords}[1]
{
  \small	
  \textbf{\textit{Keywords---}} #1
}
\providecommand{\assumptionname}{Assumption}
\providecommand{\lemmaname}{Lemma}
\providecommand{\theoremname}{Theorem}
\providecommand{\lemmaname}{Lemma}
\providecommand{\theoremname}{Theorem}
\newcommand{\algorithmiclastcon}{\textbf{Lastcon:}}
\newcommand{\lastcon}{\item[\algorithmiclastcon]}
\renewcommand{\algorithmiclastcon}{\textbf{Output:}}
\date{}
\begin{document}

\title{Higher-Order Coverage Errors of Batching Methods via Edgeworth Expansions on $t$-Statistics}
\author{Shengyi He$^{1}$, Henry Lam$^{1}$  \\
        \small $^{1}$ Department of Industrial Engineering and Operations Research, Columbia University \\
}

\maketitle

\begin{abstract}
While batching methods have been widely used in simulation and statistics, it is open regarding their higher-order coverage behaviors and whether one variant is better than the others in this regard.
We develop techniques to obtain higher-order coverage errors for batching methods by building Edgeworth-type expansions on $t$-statistics. The coefficients in these expansions are intricate analytically, but we provide algorithms to estimate the coefficients of the $n^{-1}$ error term via Monte Carlo simulation. We provide insights on the effect of the number of batches on the coverage error where we demonstrate generally non-monotonic relations. We also compare different batching methods both theoretically and numerically, and argue that none of the methods is uniformly better than the others in terms of coverage. However, when the number of batches is large, sectioned jackknife has the best coverage among all. 
\end{abstract}
\keywords{Batching methods, Edgeworth expansion, coverage errors, simulation analysis}

    

\section{Introduction}
Batching methods are widely used in simulation analysis. The basic idea of these methods is to divide the data into batches and quantify the variability of point estimates by suitably combining the batch estimates. They are especially useful tools to construct confidence intervals (CIs) when the variance of the output is hard to compute, such as quantile \citep{Nakayama2014confidence} whose variance estimation involves density estimation, and in serially dependent problems and steady-state estimation \citep{Asmussen2007,NAKAYAMA20071330}. 



While widely used, the detailed coverage behaviors of batching methods, beyond the well-known asymptotic limits, are quite open. To understand and to better compare the statistical performances of these methods, however, this question seems imminent. To put things in perspective, note that a good CI should have a small coverage error, which refers to the difference between the empirical coverage and the prescribed target coverage. It is known that under regularity conditions, all batching methods are asymptotically exact, in the sense that they achieve the prescribed coverage level as the sample size increases. In other words, each method only have ``higher-order'' coverage errors that would converge to zero as the sample size increases. However, when the sample size is finite, this higher-order coverage error could be significant and also affected by the number of batches and method used, which is what we investigate mainly in this paper.

 There are very few studies on the higher-order coverage errors of batching methods. The challenge is that the statistics used in these methods have an asymptotic $t$-distribution rather than a normal distribution, so conventional Edgeworth expansion cannot be directly applied. The most relevant result is the heuristic argument given in \cite{Nakayama2014confidence}, which argue that since the estimator based on the entire empirical distribution has a smaller bias, the so-called sectioning appears to lead to a better coverage compared to batching. \cite{Nakayama2014confidence} supports this claim with numerical results.
 
 In this paper, we develop tools to study higher-order expansions for the coverage probabilities of batching methods. Let $K$ be the number of batches and suppose that each batch has $n$ i.i.d. samples. When $K\geq r+3$ for some positive integer $r$, under regularity conditions, we show that the coverage errors of batching methods can be expanded as a series of $n^{-1/2}$ with residual $O(n^{-(r+1)/2})$. For symmetric CIs, we show that batching methods have coverage errors of order $O(n^{-1})$. To support the necessity of the assumption $K\geq r+3$, we provide examples where such expansion does not exist when $K=2,r=1$. The coefficients in the expansion involve some integration that cannot be explicitly calculated in general, but for symmetric CIs, we design a simulation scheme to generate unbiased estimates for the coefficient of the $n^{-1}$ error term, which is the leading term of the coverage error.
 
 Our argument for coverage error expansions can be extended to more general cases without the i.i.d. assumption as long as there is an Edgeworth expansion for the joint distribution of the batch averages. We provide extensions to the scenario where the sample is a dependent data sequence with proper recurrence and mixing conditions, for which batching methods are commonly used. To ensure an Edgeworth expansion, however, we need to run variants of the batching methods that are slightly different from standard practice, one by leaving a gap between successive batches, and one using regenerative cycles. These variants allow us to explicitly analyze the coverage errors under data dependence.

In terms of methodology, our analysis utilizes Edgeworth expansion and Taylor's expansion techniques combined with oddness and evenness arguments for functions. More precisely, we approximate the event that the target value is covered by the CI using a Taylor expansion argument where the coefficients are given by the implicit function theorem. Then, we study the probability of the approximated event  by integrating with the Edgeworth expansion that leverages embedded normality. For a symmetric CI, we use an oddness and evenness argument to show that the coefficient of the $n^{-1/2}$ term is 0. To our best knowledge, our line of techniques in deriving $t$-statistic expansions and applying it to the open question on higher-order coverage errors of batching methods appear the first in the literature.

By numerically comparing the theoretical coverages using our expansion (where the coefficients are estimated via simulation) and the actual coverages (estimated from repeated experiments), we show that our expansions give close approximations to the actual coverages. This allows us to compare the coverage errors of different batching methods and draw insights on the effect of the number of batches based on our expansion. From our analyses, we conclude that which method has smaller higher-order coverage error depends on problem parameters and none of them is uniformly better. However, for a fixed problem, we show that when $K$ is large and $n$ is fixed, batching suffers from a significant bias and has asymptotically 0 coverage, sectioning has an asymptotically incorrect coverage, while sectioned jackknife has asymptotically correct coverage.
When the total number of data is fixed, coverage errors tend to increase as $K$ increases. However, there is no monotonicity, meaning that for any batching method that we consider, when $K$ is small, the coefficient could either increase or decrease as $K$ increases depending on the problem parameters.

We summarize the main results and contributions of this paper:
\begin{enumerate}
    \item\emph{Building expansion techniques for $t$-statistics:} We develop techniques to study higher-order expansions for the distribution of statistics whose limiting distribution is $t$. Our techniques utilize the implicit function theorem combining with embedded normality in the considered statistic that allows us to leverage established Edgeworth expansions as a building block.
    \item\emph{Higher-order expansions for coverage errors of batching methods:} With the $t$-statistic expansions, we obtain expansions for the coverage errors of batching methods, both in the case of i.i.d. data and when data comes from a dependent sequence with proper recurrence and mixing conditions. In the latter case, we provide two approaches to run the batching methods that allow explicit analyses on the coverage errors.
 \item\emph{Sufficient conditions on the number of batches $K$:} To obtain our expansions with residuals of order $O(n^{-(r+1)/2})$, our theorem requires $K\geq r+3$. We provide an example where an expansion with residual $O(n^{-1})$ does not exist when $K=2$, which implies that the condition $K\geq 3$ is necessary when $r=1$.
      \item\emph{Simulation-based  algorithms to estimate coefficients in the expansions:} The coefficients in the expansions of coverage errors are intricate analytically, but amenable to simulation. We provide algorithms to estimate the coefficients of the $n^{-1}$ term. We prove that the algorithms give unbiased estimates for the coefficients.
    \item\emph{Insights on methods used and the effect of the number of batches:} We show that none of the batching methods are uniformly better than the others, but when the number of data in each batch is fixed and $K\rightarrow\infty$, the coverage probability of batching goes to 0, the coverage probability of sectioning goes to a limit that is different from the nominal level, while the coverage probability of sectioned jackknife converges to the nominal level. 
    When the total number of data is fixed, the coverage error tends to empirically increase as $K$ increases, but the error coefficient in our expansion is not monotone in $K$ and thus no theoretical monotonicity is achieved in general.
\end{enumerate}

The rest of this paper is as follows. Section \ref{subsec: literature} reviews related literature. Section \ref{sec: methods} introduces all considered batching methods. Section \ref{sec: validity} presents our higher-order coverage error expansions. Section \ref{sec: dependent} extends these expansions to models with dependent data. Section \ref{sec: alg} provides an algorithm to generate unbiased estimates for the $n^{-1}$ error terms for symmetric CIs. Section \ref{sec: asymptotic} discusses the asymptotic coverages as the number of batches grows. Section \ref{sec: numerics} validates our theoretical results via numerical experiments and uses our results to compare different methods and batch number choices. Technical proofs, some computation details, an alternative algorithm for batching, and additional numerical experiments are provided in the Appendix.

\section{Literature review}\label{subsec: literature}
We briefly review the literature on batching methods. \cite{pope1995improving} analyzes the coverage error of sectioning using Edgeworth expansion, but it focuses on the case when the number of batches goes to infinity so that the problem statistic can be approximated by normal. This is different from our analysis for the $t$ distribution approximation, which is our key novelty and faced challenge in this problem.  For the CI half width, \cite{Schmeiser1982batch} shows that if we assume the data size is large enough so that the non-normality of the batch estimators is negligible, then the expected half width would decrease as the number of batches increases, but the rate of decrease would become much slower when the number of batches is large. Similar observations are also made in \cite{glynn2018constructing}. Jackknife can be used to reduce small-sample bias within sections, but at the cost of greater computation time and uncertainty about the variance inflation \citep{lewis1989simulation}. 

Batching methods are commonly used in simulation output analysis, especially for steady state estimation where the data come from a dependent process. \cite{alexopoulos1996implementing} use batching to study steady-state means and numerically test different strategies for choosing the number of batches. They also study overlapping batch means where different batches can overlap with each other. \cite{steiger2002improved} also study steady-state means and proposes an algorithm called ASAP to progressively increase the batch size until the batch means pass the independence test or multivariate normality test. Some refinements (ASAP2,ASAP3) are provided in \cite{asap2,asap3}. \cite{Tafazzoli2008skart,Tafazzoli2011nskart} propose SKART and N-SKART which are adjusted batching methods based on skewness and autoregression.
Other algorithms concerning the CIs for steady-state means include \cite{LADA20061769} (WASSP) and \cite{LADA20061769} (SBATCH). \cite{Alexopoulos2014sequest} use the idea of sectioning and batching to develop algorithms to build CIs for steady-state quantiles. \cite{munoz1997batch} uses batching methods for the estimation of non-linear functions of steady-state means. \cite{dong2019new,dong2019asymptotic} use batching methods to decide the stopping time in sequential procedures and characterize the limiting distributions of the estimators at stopping times. Batching methods can also be used to estimate the variances of Markov chain Monte Carlo \citep{geyer1992practical} and the consistency of these variance estimators is established in  \cite{flegal2010batch,jones2006fixed}. Finally,   \cite{song1995optimal,flegal2010batch} analyze optimal batch sizes that minimize the mean squared errors of batching variance estimators.

Batching can also be seen as a special type of the more general umbrella technique of standardized time series (STS), as shown in Example 3.1 of \cite{Glynn1990simulation}. Similar to batching methods, the idea of STS is to cancel out the variance term by taking the ratio between a point estimator and a variance estimator, but in a more general way where the variance estimator is viewed as a finite-sample approximation of a general functional of Brownian motions that is asymptotically independent of the point estimator. \cite{Schruben1983confidence} proposes STS as a method to construct CIs for the steady-state mean of a stationary process. Generalizations and properties of STS-type methods are studied extensively in the literature, including \cite{Glynn1990simulation,Goldsman1990new,calvin2006permuted,antonini2009area,Alexopoulos2016spsts}. STS can also be used for constructing CIs for steady-state quantiles. \cite{calvin2013confidence} and \cite{Alexopoulos2019sequential,alexopoulos2020steady} use STS to study steady-state quantiles and establish the asymptotic validity of the STS CI under different conditions. 

Lastly, we mention a preliminary conference version of this work \cite{he2021error}. All contents in this paper are new except the argument on evenness and oddness that we utilize and the computation of expansions on some simple examples that we include in Appendix \ref{sec: explicit examples}.

\section{Batching methods}\label{sec: methods}

Consider the problem of constructing a CI for $\psi(P)$ where $P$ is an unknown distribution, $\psi$ is a known statistical functional  and we have data $X_1.\dots,X_{N}$ drawn i.i.d. from $P$. Suppose the data size is $N=nK$. Divide the data into $K$ batches each with size $n$ and denote $\hat{P}_i$ as the empirical distribution for the $i$-th batch where $i=1,2,\dots,K$. Denote $\hat{P}$ as the entire empirical distribution using all of the $N=nK$ data. By batching methods, in this paper we mean the following four variants:

\begin{itemize}
    \item\underline{Batching:} The batching CI is given by 
$$CI_{B} := \left(\frac{1}{K}\sum_{i}\psi\left(\hat{P}_{i}\right) \pm t_{K-1,\alpha/2}\frac{S_{\text{batch}}}{\sqrt{K}}\right)$$
where $S_{\text{batch}}^2=\frac{1}{K-1}\sum_{i=1}^{K}\left(\psi\left(\hat{P}_{i}\right)-\frac{1}{K}\sum_{j}\psi\left(\hat{P}_{j}\right)\right)^{2}$ and $t_{K-1,\alpha/2}$ is the upper $\alpha/2$-quantile of the $t_{K-1}$, the $t$-distribution with degree of freedom $K-1$. That is, batching uses the batch estimates $\psi(\hat P_i)$ as primitives and the sample mean and sample variance of these batch estimates to construct a CI.
\\

\item \underline{Sectioning:} The sectioning CI is given by 
$$CI_S := \left(\psi\left(\hat{P}\right) \pm t_{K-1,\alpha/2}\frac{S_{\text{sec}}}{\sqrt{K}}\right)$$
where $S_{\text{sec}}^2=\frac{1}{K-1}\sum_{i=1}^{K}\left(\psi\left(\hat{P}_{i}\right)-\psi\left(\hat{P}\right)\right)^{2}$. Compared with batching, sectioning uses the point estimate $\psi(\hat P)$ constructed from the entire empirical distribution in both the center of the interval and the center in the variance estimator $S_{sec}^2$.
\\

\item \underline{Sectioning-batching (SB):} The SB CI is 
$$CI_{SB} := \left(\psi\left(\hat{P}\right) \pm t_{K-1,\alpha/2}\frac{S_{\text{batch}}}{\sqrt{K}}\right).$$
SB is a modified sectioning \citep{Nakayama2014confidence} that is viewed as a middle ground between batching and sectioning. It uses the same variance estimator $S_{batch}^2$ as batching, but the same interval center $\psi(\hat P)$ as sectioning. 
\\

\item \underline{Sectioned jackknife (SJ):}  Let
$\hat{P}_{(i)}$ be the empirical distribution of all samples except for
those from the $i$-th section. Let $J_{i}=K\psi(\hat{P})-(K-1)\psi(\hat{P}_{(i)})$. The SJ CI is given by $$CI_{SJ}:=\left(\bar{J}\pm t_{K-1,\alpha/2}\frac{S_{\text{SJ}}}{\sqrt{K}}\right)$$
where $S_{\text{SJ}}^2=\frac{1}{K-1}\sum_{i=1}^{K}(J_{i}-\bar{J})^{2}$. SJ works in a similar way as conventional jackknife, but instead of considering the leave-one-out data, we leave one section or batch out. Like the conventional jackknife, SJ is also known to be bias-corrected, but requires less computational cost (Section III.5b, \cite{Asmussen2007}). 
\\
\end{itemize}

Under regularity conditions, all four methods above are asymptotically exact, i.e., $P(\psi(P)\in CI_\cdot)\rightarrow 1-\alpha$ as $n\to\infty$ where $\cdot$ can be $B$, $S$, $SB$ or $SJ$. For batching, this can be seen by the fact that the corresponding statistic has a limiting $t_{K-1}$ distribution:
\[
W_{B}:=\frac{\sqrt{nK}\left(\frac{1}{K}\sum_{i}\psi\left(\hat{P}_{i}\right)-\psi\right)}{\sqrt{\frac{1}{K-1}\sum_{i=1}^{K}\left(\sqrt{n}\psi\left(\hat{P}_{i}\right)-\frac{1}{K}\sum_{j}\sqrt{n}\psi\left(\hat{P}_{j}\right)\right)^{2}}} \Rightarrow t_{K-1}.
\]
Here $\psi:=\psi(P)$ is the target value and the limit is as $n\rightarrow\infty$ with $K$ fixed. Sectioning is also asymptotically exact since 
\[
W_{S}:=\frac{\sqrt{nK}\left(\psi\left(\hat{P}\right)-\psi\right)}{\sqrt{\frac{1}{K-1}\sum_{i=1}^{K}\left(\sqrt{n}\psi\left(\hat{P}_{i}\right)-\sqrt{n}\psi\left(\hat{P}\right)\right)^{2}}} = W_B + o_p(1) \Rightarrow t_{K-1}.
\] 
Similar to sectioning and batching, SB is also asymptotically exact since
\[
W_{SB}:=\frac{\sqrt{nK}\left(\psi\left(\hat{P}\right)-\psi\right)}{\sqrt{\frac{1}{K-1}\sum_{i=1}^{K}\left(\sqrt{n}\psi\left(\hat{P}_{i}\right)-\frac{1}{K}\sum_{j}\sqrt{n}\psi\left(\hat{P}_{j}\right)\right)^{2}}} = W_B + o_p(1) \Rightarrow t_{K-1}.
\]
Finally, the asymptotic exactness of SJ can be seen from
\[
W_{SJ}:=\frac{\sqrt{nK}\left(\bar{J}-\psi_{0}\right)}{\sqrt{\frac{1}{K-1}\sum_{i=1}^{K}(\sqrt{n}J_{i}-\sqrt{n}\bar{J})^{2}}}\Rightarrow t_{K-1}.
\]
To check that the last convergence indeed holds, consider the case $\psi(P)=E_{P}X$ first where $E_P$ denotes the expectation under $P$. In this
case, $J_{i}=\bar{X}_{i}$ so SJ is equivalent to
batching and sectioning whose statistic has limit $t_{K-1}$. More generally, with proper differentiability of $\psi$,
we can approximate $\psi(\hat{P})$ and $\psi(\hat{P}_{(i)})$ with
$E_{\hat{P}}IF(X)$ and $E_{\hat{P}_{(i)}}IF(X)$ where $IF(\cdot)$ is the influence function of $\psi$ at $P$. From this, we can get
the same asymptotic distribution.

The four CIs we have introduced above are two-sided and symmetric in the sense that each of the CI has a midpoint at the respective point estimate. With the limiting distributions of the statistics given above, it is not hard to see that the following lower one-sided CIs are also valid: $\widetilde{CI}_{B} := \left(-\infty,\frac{1}{K}\sum_{i}\psi\left(\hat{P}_{i}\right) + t_{K-1,\alpha}\frac{S_{\text{batch}}}{\sqrt{K}}\right)$, $\widetilde{CI}_S := \left(-\infty, \psi\left(\hat{P}\right) + t_{K-1,\alpha}\frac{S_{\text{sec}}}{\sqrt{K}}\right)$, $\widetilde{CI}_{SB} := \left(-\infty, \psi\left(\hat{P}\right) + t_{K-1,\alpha}\frac{S_{\text{batch}}}{\sqrt{K}}\right)$, $\widetilde{CI}_{SJ}:=\left(-\infty, \bar{J}+ t_{K-1,\alpha}\frac{S_{\text{SJ}}}{\sqrt{K}}\right)$. Similarly, one can consider upper one-sided CIs with shape $(\psi_l,\infty)$ for some $\psi_l$ or two-sided CIs that are not symmetric.

Although each of the four batching methods introduced above has asymptotically correct coverage, the coverage might be poor when we only have a finite sample. Note that $-t_{K-1,\alpha/2}\leq W_B\leq t_{K-1,\alpha/2} \Leftrightarrow \psi\in CI_{B}$ and similar arguments hold for sectioning, SJ and SB and for one-sided CIs. Therefore, to study the higher-order coverage errors, it suffices to study the distributions of $W_B,W_S,W_{SB}$ and $W_{SJ}$, and how much they deviate from $t_{K-1}$. 


\section{Expansions on batching methods with $t$-limits}\label{sec: validity}

For batching, by integrating over the Edgeworth expansion for batched estimates, we obtain the following.
\begin{thm}[Coverage error expansion for batching]\label{thm: batching_expansion}

Suppose that $\psi\left(\hat{P}_1\right)$ has a valid Edgeworth expansion, in the sense that for some $0<\sigma<\infty$,
\begin{equation}\label{eq: Edgeworth assumption}
P\left(\frac{\sqrt{n}\left(\psi\left(\hat{P}_{1}\right)-\psi\right)}{\sigma}\leq q\right)=\Phi(q)+\sum_{j=1}^{r}n^{-j/2}p_{j}(q)\phi(q)+O\left(n^{-(r+1)/2}\right)
\end{equation}
holds uniformly over $q\in\mathbb{R}$, and $p_j$ is an even polynomial when $j$ is odd and is an odd polynomial when $j$ is even. Here $\Phi$ and $\phi$ are the distribution function and density function of standard normal. Then: 
\begin{itemize}
    \item For any $q\in\mathbb{R}$, there exists $c_j^{(B,K)}\in\mathbb{R},j=1,2,\dots,r$, such that  $$P(W_{B}\leq q)=P(t_{K-1}\leq q)+\sum_{j=1}^rn^{-j/2}c_j^{(B,K)} + O(n^{-(r+1)/2})$$
    Here the coefficients $c_{j}^{(B,K)}$ depends on $K$, the distribution $P$, the objective function $\psi$ and the value of $q$, but do not depend on $n$.
    \item $P\left(-q\leq W_{B}\leq q\right)=P(-q\leq t_{K-1}\leq q)+O(n^{-1})$.
\end{itemize}
\end{thm}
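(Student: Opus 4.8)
The plan is to reduce the symmetric two-sided coverage to a parity argument that annihilates the $n^{-1/2}$ term supplied by the first bullet. First I would pass to the standardized batch estimates $Y_i:=\sqrt n(\psi(\hat P_i)-\psi)/\sigma$, $i=1,\dots,K$, which are i.i.d.\ and each obeys \eqref{eq: Edgeworth assumption}. Cancelling $\sigma$ from the numerator and denominator of $W_B$ exhibits it as the classical one-sample $t$-statistic $W_B=\sqrt K\,\bar Y/S_Y=:T(\vec Y)$, with $\bar Y$ and $S_Y$ the sample mean and sample standard deviation of $\vec Y=(Y_1,\dots,Y_K)$. The decisive structural fact is that $T$ is odd under a global sign flip, $T(-\vec y)=-T(\vec y)$, since the numerator is odd while $S_y$ is invariant; equivalently, the acceptance region $A_q:=\{\vec y:\abs{T(\vec y)}\le q\}$ satisfies $A_q=-A_q$. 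Writing $P(-q\le W_B\le q)=P(W_B\le q)-P(W_B\le -q)$ and inserting the first bullet at both $q$ and $-q$, the whole claim reduces to showing that the leading coefficient $c_1^{(B,K)}$, viewed as a function of $q$, is even, so that its contributions at $q$ and $-q$ cancel and only an $O(n^{-1})$ remainder survives.

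To see why $c_1^{(B,K)}$ is even I would track the parity induced by the hypothesis on the $p_j$. Formally differentiating \eqref{eq: Edgeworth assumption} turns the $j$-th correction into $(p_j'(y)-y\,p_j(y))\phi(y)$; since $p_j$ is even for odd $j$ and odd for even $j$, this factor is odd for odd $j$ and even for even $j$. Multiplying the independent marginals, the joint law of $\vec Y$ acquires an expansion whose leading piece $\prod_i\phi(y_i)$ is even in $\vec y$ and whose $n^{-1/2}$ piece $\big(\sum_i(p_1'(y_i)-y_ip_1(y_i))\big)\prod_k\phi(y_k)$ is odd in $\vec y$. Integrating over the symmetric region $A_q$, the even leading term contributes exactly $P(-q\le t_{K-1}\le q)$ because Gaussian inputs make $T(\vec Y)\sim t_{K-1}$, while the odd $n^{-1/2}$ term integrates to zero by symmetry. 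This is precisely the statement that the $n^{-1/2}$ coefficient of $P(-q\le W_B\le q)$ vanishes, i.e.\ $c_1^{(B,K)}(q)=c_1^{(B,K)}(-q)$, whence $P(-q\le W_B\le q)=P(-q\le t_{K-1}\le q)+O(n^{-1})$.

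The main obstacle is analytic rather than algebraic: the parity cancellation is clean, but converting the formal density manipulation into a rigorous $O(n^{-1})$ statement is delicate. The hypothesis is a distribution-function expansion, not a density (local) expansion, so I would either justify the required local expansion of the joint law or, more safely, run the argument entirely at the level of the first bullet, taking its one-sided expansion as given and proving only that the parity of $p_1$ forces $c_1^{(B,K)}$ to be even, which reuses the same bookkeeping without positing a joint density. Either way one must control the remainder uniformly after integrating against the Gaussian weight over the unbounded, curved region $A_q$, and this requires $S_Y$ to stay away from $0$ with sufficiently many inverse moments; this is exactly where a lower bound on the number of batches $K$ enters (for $r=1$, $K\ge3$ so that $S_Y^{-1}$ is integrable against the Gaussian weight), and it is the analytic core already assembled for the first bullet, which the present proof borrows wholesale.
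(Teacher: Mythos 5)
Your parity argument for the two-sided bound is exactly the paper's: the $n^{-1/2}$ density correction $(p_1'(y)-y\,p_1(y))\phi(y)$ is odd because $p_1$ is even, the acceptance region is symmetric because $W_B$ flips sign under $\mathbf{y}\mapsto-\mathbf{y}$, and Gaussian inputs reproduce $t_{K-1}$, so the $n^{-1/2}$ term integrates to zero. But there are two genuine gaps. First, you never prove the first bullet; you ``borrow it wholesale.'' That bullet is where the real work lies, and the paper's proof of it rests on two facts you would still need to supply: (i) $W_B=f\bigl((\sqrt{n}(\psi(\hat P_i)-\psi)/\sigma)_{i=1}^K\bigr)$ for a function $f$ that does not depend on $n$, so the integration region $\{f(\mathbf z)\le q\}$ is fixed in $n$ and the coefficients are automatically $n$-free; and (ii) the CDF-level hypothesis \eqref{eq: Edgeworth assumption} can be integrated coordinate-by-coordinate against the signed product Edgeworth measure with total error $O(n^{-(r+1)/2})$, which the paper proves in Appendix \ref{sec: proof other} by a Fubini argument exploiting that each $z_1$-slice of $\{f(\mathbf z)\le q\}$ is a union of at most two intervals, so the uniform CDF bound applies at the endpoints. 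This also resolves the difficulty you flag about the hypothesis being a distribution-function rather than a density expansion: the approximating (signed) measure is absolutely continuous even though the true law need not be, so no joint density of the data is ever posited.

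Second, and more seriously, your closing paragraph imports a requirement that does not belong to this theorem and would make your proof fail to establish it as stated. You claim the remainder control ``requires $S_Y$ to stay away from $0$ with sufficiently many inverse moments,'' hence $K\ge 3$ when $r=1$. Theorem \ref{thm: batching_expansion} carries no lower bound on $K$ at all, and it does hold for $K=2$: the paper computes the $K=2$ batching error explicitly in Appendix \ref{sec: explicit examples} and finds it is $O(n^{-1})$, exactly as claimed (the $K=2$ failure in Proposition \ref{prop: errorK2} concerns sectioning, not batching). The paper's proof never Taylor-expands in $1/S_Y$ and needs no inverse moments: because $f$ is $n$-free, one only integrates polynomials times Gaussian densities over a fixed region, and the error control is done at the CDF level as above. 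Inverse moments of the denominator, and the resulting lower bounds on the number of batches ($K\ge r+3$), are the crux of Theorem \ref{thm: validity} for sectioning, SB and SJ, where the statistic is \emph{not} an $n$-free function of the batch averages and an implicit-function/Taylor expansion is unavoidable. You have transplanted the difficulty of that theorem into this one, where it does not arise; as written, your argument would prove a strictly weaker statement than the one asserted.
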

\begin{proof}
As long as we have a valid Edgeworth expansion for $\sqrt{n}\left(\psi\left(\hat{P}_{i}\right)-\psi\right)$,
noting that $W_{B}$ is a function of $\left(\sqrt{n}\left(\psi\left(\hat{P}_{i}\right)-\psi\right)\right)_{i=1}^{K}$,
we can evaluate the probability $P\left(W_{s}\leq q\right)$ based
on integration. Let $f(\mathbf{z})=\frac{\sqrt{K}\frac{1}{K}\sum_{i=1}^Kz_i}{\sqrt{\frac{1}{K-1}\sum_{i=1}^{K}(z_i-\frac{1}{K}\sum_{j}z_j)^2}}$. Then one can check that
\begin{equation}\label{eq: WB function}
    W_{B}=f\left(\left(\sqrt{n}\left(\psi\left(\hat{P}_{i}\right)-\psi\right)/\sigma\right)_{i=1}^{K}\right)
\end{equation}
Therefore, from \eqref{eq: Edgeworth assumption}, by integration we can get (here, the reminder term below has the claimed order because the testing region $f(\mathbf{z})\leq q$ has a nice form: given $z_1,\dots,z_{i-1},z_{i+1},\dots,z_{K}$, the values of $z_i$ that makes $f(\mathbf{z})\leq q$ can be described by at most two intervals, whose measures are uniformly controlled by \eqref{eq: Edgeworth assumption}. See Appendix \ref{sec: proof other} for a detailed proof on this.)
\begin{equation}\label{eq: WB integration}
    P\left(W_{B}\leq q\right)=\int_{f(\mathbf{z})\leq q}\Pi_{j=1}^{K}d\left(\Phi(z_{j})+\sum_{j=1}^{r}n^{-j/2}p_{j}(z_{j})\phi(z_{j})\right)+O\left(n^{-(r+1)/2}\right).
\end{equation}

For a symmetric CI, by a similar integration, we have that
\begin{align*}
& P\left(-q\leq W_{B}\leq q\right) \\
 = & \int_{-q\leq f(\mathbf{z})\leq q}\Pi_{j=1}^{K}d\left(\Phi(z_j)+n^{-1/2}p_{1}(z_j)\phi(z_j)\right)+O\left(n^{-1}\right)\\
 = & P\left(-q\leq t_{K-1}\le q\right)+Kn^{-1/2}\int_{-q\leq f(\mathbf{z})\leq q}\phi(z_{1})\phi(z_{2})\dots\phi(z_{K})\left(-z_{1}p_{1}(z_{1})+p_{1}^{\prime}(z_{1})\right)d\mathbf{z}\\&+O(n^{-1}).
\end{align*}
Here $p_1^{\prime}$ is the derivative of $p_1$. Since $p_{1}$ is an
even polynomial, $-z_{1}p_{1}(z_{1})+p_{1}^{\prime}(z_{1})$ is an odd polynomial.
In addition, note that the area $\{-q\leq f(\mathbf{z})\leq q\}$ is symmetric around
$\mathbf{0}$ since $f(\mathbf{z})=-f(-\mathbf{z})$. Thus, the integration
in the RHS above is 0. As a result, we have that $P\left(-q\leq W_{s}\leq q\right)$
can be expanded as a power of $n^{-1/2}$ and its leading term is
of order $n^{-1}$.
\end{proof}

Theorem \ref{thm: batching_expansion} implies that the distribution of the batching statistic can be expanded as a series of $n^{-1/2}$, and that the coverage error for the symmetric CI (i.e., a CI centered at the point estimate as introduced in Section \ref{sec: methods}) given by batching is of order $O(n^{-1})$. The reasonableness of the condition \eqref{eq: Edgeworth assumption} can be checked from Theorem 2.2 of \cite{Hall1992}. Relation \eqref{eq: WB function} is important for the proof, which says that $W_B$ can be written as a function of $\left(\sqrt{n}\left(\psi\left(\hat{P}_{i}\right)-\psi\right)\right)_{i=1}^{K}$ whose distribution is well understood by Edgeworth expansion. Moreover, this function does not depend on $n$, so in \eqref{eq: WB integration}, when we integrate with respect to the Edgeworth expansion, the area of integration remains fixed when $n$ changes. 

However, for other batching methods, the above is no longer the case. Consider sectioning for example. $W_{S}$ cannot
be expressed as merely a function of $\left(\sqrt{n}\left(\psi\left(\hat{P}_{i}\right)-\psi\right)\right)_{i=1}^{K}$, but is also dependent on $\psi(\hat{P})$.
Moreover, it is difficult to study the joint distribution of
$$\Lambda:=\left(\sqrt{nK}\left(\psi\left(\hat{P}\right)-\psi\right),\left(\sqrt{n}\left(\psi\left(\hat{P}_{i}\right)-\psi\right)\right)_{i=1}^{K}\right)$$
via Edgeworth expansion, since its asymptotic joint distribution is degenerate. By the latter we mean that, under regularity conditions, $\sqrt{nK}\left(\psi\left(\hat{P}\right)-\psi\right) - \sqrt{K}\frac{1}{K}\sum_{i=1}^K \sqrt{n}\left(\psi\left(\hat{P}_{i}\right)-\psi\right) =o_p(1) $, which implies that the limiting distribution of $\Lambda\in\mathbb{R}^{K+1}$ has only $K$ degrees of freedom, hence is degenerate. Similarly for SJ, we note that $SJ$ depends on $\psi(\hat{P}),\psi(\hat{P}_{(1)}),\dots,\psi(\hat{P}_{(K)})$, but again it appears difficult to study
its joint distribution
because of the asymptotic degeneracy. 

To handle this issue, we focus on the smooth
function model where $\psi(P)=f(E_{P}X)$. For this model, it is not difficult to see that each of $W_S,W_{SB}$ and $W_{SJ}$ can be written as a function of $\tilde{\Lambda}:=\left(\sqrt{n}\left(\bar{X}_i-E_PX\right)\right)_{i=1}^{K}$ whose distribution can be approximated by Edgeworth expansion. However, we still face some difficulties. To illustrate this, let's take sectioning as an example and denote this function as $f_S$, which gives $W_S=f_S(\tilde{\Lambda})$. As one can check, the function $f_S(\cdot)$ would depend on $n$ (as long as $f$ is nonlinear), so the set of $\tilde{\Lambda}$ that makes $f_S(\tilde{\Lambda})\leq q$ depends on $n$. Therefore, a direct integration as in the proof of Theorem \ref{thm: batching_expansion} will not give an expansion whose coefficients are free of $n$. To address this, we approximate the event $W_S\leq q$ by using a conditioning argument and studying the Taylor expansion for the critical value. More precisely, letting $A_i=\sqrt{n}(\bar{X}_i-E_PX)$, we consider the vector  $A:=(\bar{A},A_1-\bar{A},\dots,A_{K-1}-\bar{A})\in\mathbb{R}^{dK}$ which can be seen as a linear transformation of $\tilde{\Lambda}$ (so its Edgeworth expansion is also known). Let $A_{0,1}\in\mathbb{R}$ denote the first coordinate of $A$ and let $A^{\prime}\in\mathbb{R}^{dK-1}$ denote the rest of the coordinates of $A$. We reformulate the event $W_{S}\leq q$ (whose probability we are interested in) as $ A_{0,1}\leq F_{+}^{n}$ where $F_{+}^{n}$ is a function of $A^{\prime}$ (and depend on $n$). The expansion for $F_{+}^{n}$ w.r.t. $n^{-1/2}$ can be studied by the implicit function theorem. Moreover, the smoothness of (the Edgeworth approximation for) the distribution of $A_{0,1}$ can be used to argue that the contribution of the higher-order terms in the expansion for $F_{+}^{n}$ to the final expectation is also of higher order. Therefore, with the expansion for $F_{+}^{n}$, by integrating with the Edgeworth expansion for $A$ in a proper way, we get an expansion for the probability of $A_{0,1}\leq F_{+}^{n}$ which is what we want. The detailed argument is given in Appendix \ref{sec: proof_validity}. We can show the following theorem regarding the validity and order of expansion for other batching methods. 
\begin{thm}[Coverage error expansions for all methods]
\label{thm: validity}Suppose that $\psi(\cdot)$ is a statistical functional mapping from distributions in $\mathbb{R}^d$ to $\mathbb{R}$ defined by $\psi\left(\bar{P}\right)=f\left(E_{\bar{P}}X\right)$
for a vector $X\sim \bar{P},X\in\mathbb{R}^{d}$. Assume the following Cramer's condition
holds for the distribution of $X_1\in\mathbb{R}^d$ (recall that $X_1,\dots,X_{nK}$ are drawn i.i.d. from $P$):
\begin{equation}\label{eq: cramer}
\limsup_{\left|t\right|\rightarrow\infty}\left|E_{P}(\exp\{i\left<t,X\right>\})\right|<1,
\end{equation}
Suppose that for some positive integer $r$, $X$ has finite moments up to order
$r+2$ with nonsingular covariance, and $f$ is $r+1$ times differentiable in a neighborhood of
$E_{P}X$ with $\nabla f(E_{P}X)\neq 0$. Then:
\begin{itemize}
    \item If $K\geq r+3$, then for any $q\in\mathbb{R}$, there exists $c_j^{(SJ,K)}\in\mathbb{R},j=1,2,\dots,r$, such that  $$P(W_{SJ}\leq q)=P(t_{K-1}\leq q)+\sum_{j=1}^rn^{-j/2}c_j^{(SJ,K)} + O(n^{-(r+1)/2})$$
    Here the coefficients $c_{j}^{(SJ,K)}$ depends on $K$, the distribution $P$ that generates each $X_i$, the objective function $f$ and the value of $q$, but does not depend on $n$.
\item Suppose that $K\geq 4$. Then we have $P\left(-q\leq W_{SJ}\leq q\right)=P(-q\leq t_{K-1}\leq q)+O(n^{-1})$.
\end{itemize}
The same result holds if $W_{SJ}$ is replaced by $W_{S}$, $W_{SB}$ or $W_{B}$ and the coefficients $c_j^{(SJ,K)},j=1,2,\dots,r$ are replaced with a different set of coefficients corresponding to each method.
\end{thm}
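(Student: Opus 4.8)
The plan is to follow the roadmap sketched just before the statement, treating $W_{SJ}$ (and likewise $W_S$, $W_{SB}$, $W_B$) as a smooth, $n$-dependent function of the batch means and then reducing the coverage probability to a one-dimensional integral against a multivariate Edgeworth density. First I would set $A_i=\sqrt{n}(\bar{X}_i-E_{P}X)$ and Taylor-expand every building block about $E_{P}X$: $\psi(\hat{P})=f(\bar{X})$, $\psi(\hat{P}_i)=f(\bar{X}_i)$, and for SJ the pseudovalues $J_i=Kf(\bar{X})-(K-1)f(\bar{X}_{(i)})$ with $\bar{X}_{(i)}$ the leave-one-batch-out mean. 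Since $f$ is $(r+1)$-times differentiable, this writes $W_{SJ}=W_{SJ}(\tilde{\Lambda};n)$ as a ratio whose numerator and denominator are polynomials in the $A_i$ with coefficients that are power series in $n^{-1/2}$ through order $r$ with remainder $O(n^{-(r+1)/2})$; the leading numerator is $\sqrt{K}\,\nabla f(E_{P}X)^{\top}\bar{A}$ and the leading denominator is the sample standard deviation of the scalars $\nabla f(E_{P}X)^{\top}A_i$, which is exactly the structure producing the $t_{K-1}$ limit.

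Next I would pass to the contrast coordinates $A=(\bar{A},A_1-\bar{A},\dots,A_{K-1}-\bar{A})$, an $n$-independent invertible linear map of $\tilde{\Lambda}$. The vector $(A_1,\dots,A_K)$ is a normalized sum of $n$ i.i.d.\ $\mathbb{R}^{dK}$-vectors (one draw from each batch), whose covariance is nonsingular and which inherits Cramér's condition \eqref{eq: cramer} from that of $X_1$, so the multivariate Edgeworth expansion applies and transfers to a density expansion for $A$. After an orthogonal change of coordinates of $X$ making $\nabla f(E_{P}X)$ proportional to $e_1$ (possible since $\nabla f(E_{P}X)\neq0$), the numerator depends genuinely on the single coordinate $A_{0,1}$, so the implicit function theorem applied to $W_{SJ}(A_{0,1},A';n)=q$ yields $\{W_{SJ}\leq q\}=\{A_{0,1}\leq F_{+}^{n}(A')\}$ with $F_{+}^{n}(A')=F_0(A')+n^{-1/2}F_1(A')+\dots+n^{-r/2}F_r(A')+O(n^{-(r+1)/2})$, the $F_j$ being $n$-free functions of $A'$ read off from the derivatives of $W_{SJ}$. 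I would then integrate out $A_{0,1}$ first against the smooth conditional Edgeworth density up to the threshold $F_{+}^{n}(A')$ and Taylor-expand that integral about $F_0(A')$; since $F_{+}^{n}-F_0=O(n^{-1/2})$, each term raises the order by $n^{-1/2}$, and collecting through order $n^{-r/2}$ and integrating over $A'$ produces the $c_j^{(SJ,K)}$, the $n=\infty$ leading term reproducing $P(t_{K-1}\leq q)$ because there all four statistics collapse to the same studentized mean of the $\nabla f(E_{P}X)^{\top}A_i$.

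The main obstacle is controlling the remainder uniformly so it is genuinely $O(n^{-(r+1)/2})$, and this is exactly where $K\geq r+3$ enters. The coefficients $F_j$ and the Taylor coefficients of the threshold integral carry negative powers of the denominator $S$ (the sample standard deviation of the $K$ batch contrasts, with $K-1$ degrees of freedom) that grow with the expansion order; the integrals of the $n^{-j/2}$ terms converge, and the tail contributions where the density approximation is weakest remain negligible, only when $E[S^{-m}]$ is finite for $m$ up to roughly $r+1$. Since $S^2$ has $K-1$ degrees of freedom, this requires $K-1>r+1$, i.e.\ $K\geq r+3$, which also explains why the expansion can fail at $K=2,r=1$. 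I would therefore split the domain into a central part, handled by the density expansion directly, and a tail part, controlled by crude bounds together with the finite negative moments of $S$.

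Finally, for the symmetric statement I would write $P(-q\leq W_{SJ}\leq q)=P(W_{SJ}\leq q)-P(W_{SJ}\leq -q)$ and show its $n^{-1/2}$ coefficient $c_1^{(SJ,K)}(q)-c_1^{(SJ,K)}(-q)$ vanishes by the oddness/evenness argument already used for batching: the leading statistic is odd in $A$ while its $O(n^{-1/2})$ correction (from the quadratic term of $f$) is even, the leading region $\{W^{(0)}\leq q\}$ is the negation-image of $\{W^{(0)}\geq -q\}$, and $p_1$ is even, so the first-order integrand is odd over a symmetric domain and integrates to zero; this uses only the first-order expansion, hence $K\geq4$. The proofs for $W_S$, $W_{SB}$ and $W_B$ are identical once each is written as the corresponding smooth function of $\tilde{\Lambda}$, with only the $F_j$ and thus the $c_j$ changing.
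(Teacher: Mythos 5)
Your proposal is correct and follows essentially the same route as the paper's proof: Taylor expansion of the statistic in the batch means, passage to the coordinates $(\bar A, A_1-\bar A,\dots,A_{K-1}-\bar A)$, an implicit-function-theorem reformulation of the coverage event as $\{A_{0,1}\le F_{+}^{n}(A')\}$ (up to negligible tail events) integrated against the conditional Edgeworth density, the inverse-chi-squared negative-moment bound on the studentizing denominator that forces $K\ge r+3$, and the oddness/evenness cancellation of the $n^{-1/2}$ coefficient (applied with $r=1$, hence $K\ge 4$) for the symmetric interval. The only cosmetic differences are your WLOG rotation making $\nabla f(E_PX)$ proportional to $e_1$ and your boundary-shift phrasing of the parity argument, which the paper instead executes by conditioning on $Z_1,\dots,Z_{K-1}$ and cancelling endpoint contributions; both are the same oddness/evenness mechanism.
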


Theorem \ref{thm: validity} gives results similar to Theorem \ref{thm: batching_expansion}, but has different assumptions and works for all batching methods. The assumptions on the Cramer's condition, finiteness of moments, and the differentiability of $f$ are common in the literature of Edgeworth expansion. The assumption that seems nontrivial is the requirement $K\geq r+3$. In our proof, this condition
is used to guarantee the finiteness of some terms that involve the
moment of an inverse chi-squared distribution with $K-1$ degrees of
freedom. This condition may not be necessary in some cases, e.g., the simple examples in Appendix \ref{sec: explicit examples}. However, we also find examples where the claim of Theorem \ref{thm: validity} does not hold when $K$ is too small. The following proposition reveals that when $K=2$, the magnitude of error for sectioning in the symmetric case is indeed larger than $n^{-1}$. This implies that for Theorem \ref{thm: validity} with $r=1$ to hold, $K$ should be at least 3 (unless there are other additional assumptions). 
\begin{prop}[Magnitude of error when $K=2$]\label{prop: errorK2}
Let $\psi(\bar{P})=f(E_{\bar{P}}[X],E_{\bar{P}}[{Y}]),f(x,y)=x+y^{2}$
and $(X,Y)\sim N(0,2I_{2})$ under $P$ where $I_2$ is the 2-dimensional identity matrix,  
$$\left| P(-q\leq W_S\leq q)-P(-q\leq t_1\leq q)\right|=\omega(n^{-1})$$ but $$\left| P(-q\leq W_S\leq q)-P(-q\leq t_1\leq q)\right|=o(n^{-1/2}).$$ 
\end{prop}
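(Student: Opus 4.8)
The plan is to compute the coverage error of sectioning directly for this specific example, since the low dimension ($K=2$, $d=2$) and the explicit Gaussian distribution make everything tractable in closed form. With $(X,Y)\sim N(0,2I_2)$, the batch averages $\bar X_i,\bar Y_i$ are independent Gaussians, and $\psi=f(E_PX,E_PY)=0+0^2=0$. First I would write out $W_S$ explicitly. Setting $A_i=\sqrt{n}\bar X_i$ and $B_i=\sqrt{n}\bar Y_i$, the point estimate contributes $\sqrt{nK}(\psi(\hat P)-\psi)=\sqrt{K}(\bar A+\frac{1}{\sqrt n}\cdot\frac{1}{K}(\sum_i B_i)^2/\dots)$ type terms; the key feature of $f(x,y)=x+y^2$ is that the nonlinearity enters only through the $y^2$ piece, and its second derivative is constant, so the Taylor expansion of $\psi(\hat P)$ and of each $\psi(\hat P_i)$ in powers of $n^{-1/2}$ terminates exactly rather than being an infinite series. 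This is the structural reason the example is clean: there are no higher-order remainder terms to control, only a finite polynomial in the Gaussian variables scaled by powers of $n^{-1/2}$.

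Next I would carry out the expansion of the event $-q\le W_S\le q$ to the relevant order. The numerator of $W_S$ is $\sqrt{nK}(\psi(\hat P)-\psi)$ and the denominator is the sectioning standard error built from $\psi(\hat P_i)-\psi(\hat P)$. Because $f$ is quadratic in $y$ with no cross term, I expect the $n^{-1/2}$ correction to $W_S$ relative to its leading $t_1$-behavior to be governed by the $Y$-variables entering quadratically, and the crucial point is to track the \emph{even} part of the correction, since the symmetric coverage error $P(-q\le W_S\le q)-P(-q\le t_1\le q)$ picks out exactly the even-in-the-fluctuations contributions (the odd parts cancel by the symmetry argument used in Theorem \ref{thm: batching_expansion}). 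I would expand $W_S$ as $W_S = W^{(0)} + n^{-1/2}W^{(1)} + n^{-1}W^{(2)}+\dots$ where $W^{(0)}$ has the $t_1$ distribution, and then integrate the indicator of $\{-q\le W_S\le q\}$ against the (exact, Gaussian) joint density of $(A_1,A_2,B_1,B_2)$. The claim $|P(-q\le W_S\le q)-P(-q\le t_1\le q)|=o(n^{-1/2})$ is the assertion that the $n^{-1/2}$ coefficient vanishes, which I would verify via the oddness/evenness cancellation exactly as in the batching proof; this part should be routine.

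The substantive claim is the lower bound $|P(-q\le W_S\le q)-P(-q\le t_1\le q)|=\omega(n^{-1})$, i.e. that the error is \emph{strictly larger} in order than $n^{-1}$. This is where the $K=2$ degeneracy bites. With only two batches, the denominator $S_{\text{sec}}$ is built from a single squared difference $(\psi(\hat P_1)-\psi(\hat P))^2+(\psi(\hat P_2)-\psi(\hat P))^2$, and the $t_1$ limit has heavy tails (a Cauchy-type denominator that can be arbitrarily small). I expect the mechanism to be that the correction term in $W_S$, when divided by a denominator that concentrates near zero with non-negligible probability under $t_1$, produces a contribution whose expectation against the Gaussian density fails to be $O(n^{-1})$ — concretely, some moment of the form $E[\phi_{\text{denominator}}^{-s}]$ that would be finite for $K\ge 3$ is \emph{infinite} (or only conditionally convergent) when $K=2$, exactly matching the remark after Theorem \ref{thm: validity} that the proof needs finiteness of inverse-chi-squared moments with $K-1$ degrees of freedom. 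The hard part will be making this heavy-tail/non-integrability argument rigorous: I would identify the precise term in the expansion of $P(-q\le W_S\le q)$ whose coefficient is governed by a divergent inverse-chi-squared moment at $K-1=1$ degree of freedom, show its contribution is of order strictly between $n^{-1}$ and $n^{-1/2}$ (plausibly $n^{-1}\log n$ or $n^{-3/4}$ depending on how the divergence is regularized by the finite-$n$ cutoff), and confirm it does not cancel against anything else. The remaining bookkeeping — that all other terms are genuinely $O(n^{-1})$ or smaller and that the surviving term has a nonzero multiplier for generic $q$ — I would relegate to direct computation with the explicit densities.
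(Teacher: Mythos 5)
Your proposal is correct in substance and follows essentially the same route as the paper's proof: reduce to an exact computation with independent standard normals, solve the coverage event for the conditionally normal coordinate, kill the $n^{-1/2}$ term by an oddness/evenness cancellation, and obtain the $\omega(n^{-1})$ lower bound from the divergent inverse moment $E[|B_{1,1}|^{-1}]=\infty$ (the inverse-chi moment at $K-1=1$ degrees of freedom), which the paper makes rigorous by a second-order Taylor expansion whose second derivative is pointwise positive (so nothing cancels) together with monotone convergence at the regularization scale $h=n^{-1/2}$. One caution: your upper-bound step cannot be run as a literal term-by-term expansion of $W_S$ to order $n^{-1}$ --- by your own lower bound no such expansion exists, since the would-be $n^{-1}$ coefficient is a divergent integral --- so, as the paper does, you must prove the $o(n^{-1/2})$ claim by dominated convergence with an explicit dominating function, after noting that the even shift term cancels between the two endpoints $F_{+}^{(n)}$ and $F_{-}^{(n)}$ and the surviving first-order term is odd with mean zero.
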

We also remark that in Theorem \ref{thm: validity} the condition $K\geq r+3$ could be potentially relaxed to $K\geq r+2$ if we only need a residual $o(n^{-r/2})$ instead of $O(n^{-(r+1)/2})$. This could be done by conducting all the expansions in our proofs with reminder terms of the Peano type. A detailed argument of this would require some delicate but similar analysis as this paper.

\section{Extensions to dependent data}\label{sec: dependent}

Following from the discussion before Theorem \ref{thm: validity},
for the smooth function model, as long as the (scaled) batch averages
have a valid joint Edgeworth
expansion and other regularity conditions in Theorem \ref{thm: validity}
hold, we can show the validity of Edgeworth expansion for
the resulting statistic. In what follows, we extend Theorem \ref{thm: validity} to the setting where the data is a dependent sequence.

Consider a stationary dependent sequence (e.g., a Markov chain) $X_1,X_2,\dots$ and suppose we are interested in estimating $f(Eg(X_1))$ where the expectation is taken under the stationary measure and $g$ maps from the the state space of the data sequence to $\mathbb{R}$. Here we introduce $g$ because the Edgeworth expansion literature only works for the average of $g(X_i)$'s (which is one-dimensional) but not for the average of $X_i$'s (which can be multidimensional). Suppose we divide the data into batches each having $n$ samples. Let $\bar{g}_k$ denote the average of $g(X_i)$ among the $X_i$'s in the $k$-th batch. For this model, under some conditions, Edgeworth expansion for the distribution of $\sqrt{n}(\bar{g}_1-Eg(X_1))$ is known (\citealt{jensen1989asymtotic,malinovskii1987limit}). However, this is not sufficient to inform the joint distribution of $(\sqrt{n}(\bar{g}_1-Eg(X_1)),\dots,\sqrt{n}(\bar{g}_K-Eg(X_1)))$ since we do not know the dependence among batches. Indeed, noting that
$Cor(g(X_{n}),g(X_{n+1}))$ (here $Cor$ means correlation) would contribute $\Phi(n^{-1})$ to $Cor\left(\sqrt{n}\left(\bar{g}_{1}-Eg(X_1)\right),\sqrt{n}\left(\bar{g}_{2}-Eg(X_1)\right)\right)$,
we expect that $Cor\left(\sqrt{n}\left(\bar{g}_{1}-Eg(X_1)\right),\sqrt{n}\left(\bar{g}_{2}-Eg(X_1)\right)\right)$ is of order $n^{-1}$. Therefore, the dependence among batches would contribute $n^{-1}$ to the Edgeworth expansion for the joint distribution  which is not negligible for the higher-order coverage error. Without further information, it seems challenging to explicitly analyze the effect of dependence among batches. For this reason, we consider some modified schemes where different batch averages are independent or nearly independent. 

\subsection{Approach 1: Leave a gap between successive batches}

Suppose that, between each two adjacent batches, we discard $n^{\delta}$
data for some $0<\delta<1$ (for convenience, suppose that after this
operation, each batch has $n$ data). Intuitively, with this gap, there is more independence between adjacent batches. After this operation, we construct the CIs in the same way as introduced in Section \ref{sec: methods}. More precisely, following the convention therein, let $\hat{P}_{i},i=1,2,\dots,K$ denote the empirical distribution of $X_j$'s in the $i$-th batch (after leaving the gap). Let $\hat{P}=\frac{1}{K}\sum_{i=1}^K\hat{P}_i$ and $\hat{P}_{(i)} = \frac{1}{K-1}\sum_{1\leq k\leq K,k\neq i}\hat{P}_{k}$. Let $\psi(\cdot)$ be defined as $\psi({\bar{P}})=f(E_{X\sim {\bar{P}}} g(X))$. With these, we construct the CIs $CI_B,CI_S,CI_{SB},CI_{SJ}$ and consider the associated statistics $W_{SJ},W_{S},W_{B},W_{SB}$ in terms of $\psi(\cdot),\hat{P},\hat{P}_{i},\hat{P}_{(i)},i=1,2,\dots,K$ using the same formulas as given in Section \ref{sec: methods}. 


To proceed, we introduce the notion of mixing coefficient and Harris recurrence.
\begin{definition}[Mixing coefficient]
For any two Borel $\sigma$-fields $\mathcal{A}$ and $\mathcal{B}$, define the mixing coefficient $\alpha(\mathcal{A},\mathcal{B}):=\sup_{A\in\mathcal{A},B\in\mathcal{B}}\left|P(A\cap B)-P(A)P(B)\right|$. For a stationary sequence X with $\mathcal{F}_j^l:=\sigma(X_j,\dots,X_l)$, define $\alpha(n):=\sigma(\mathcal{F}_{-\infty}^0,\mathcal{F}_{n}^{\infty})$.
\end{definition}
\begin{definition}[Harris recurrence]\label{def: Harris}
A Markov chain $X$ defined on state space $\mathcal{X}$ is said to be Harris recurrent if there exists a measure $\mu$ such that for any measurable set $B\subset\mathcal{X}$ satisfying $\mu(B)>0$, we have $P_x(X_n\in B,i.o.)=1,\forall x\in\mathcal{X}$.
\end{definition}
 With the Edgeworth expansion proposed in Theorem 1 of \citet{jensen1989asymtotic}, we can show the following validity of the coverage error expansion: 
\begin{thm}[Coverage error expansions for batching methods on dependent data bearing recurrent atoms with gaps between successive batches] \label{thm: validity dependent}
Suppose that $X_{i}$, $i=1,2,\dots$ is a Harris-recurrent and strictly stationary
Markov chain with a recurrent atom $A_{0}$ and stationary distribution $P$. Moreover, suppose
that 
$n^{\frac{r+1}{2\delta}}\alpha(n)\rightarrow 0$ as $n\rightarrow 0$ for some positive integer $r$. 
Let
\[
\tau=\min\{n>0:X_{n}\in A_{0}\},G=\sum_{i=1}^{\tau}\left|g(X_{i})\right|
\]
Suppose the uniform Cramer condition holds: there exists $\delta^{\prime}<1$
such that $\left|E_P\exp(iuX_{1}+iv\tau)\right|<\delta^{\prime}$ for all
$v\in\mathbb{R}$ and all $u\in\mathbb{R}^{d}$ with $\left\Vert u\right\Vert >c$.
Suppose further that $E\tau^{r+3}<\infty,EG^{r+3}<\infty$.
Suppose that $f$ is $r$ times differentiable in a neighborhood of
$E_Pg(X_1)$ and $\nabla f(E_Pg(X_1))\neq 0$. Moreover, suppose that $K\geq r+3$. Then for any $q\in\mathbb{R}$, there exists $\bar{c}_j^{(SJ,K)}\in\mathbb{R},j=1,2,\dots,r$, such that  $$P(W_{SJ}\leq q)=P(t_{K-1}\leq q)+\sum_{j=1}^rn^{-j/2}\bar{c}_j^{(SJ,K)} + O(n^{-(r+1)/2})$$
    Here the coefficients $\bar{c}_{j}^{(SJ,K)}$ depends on $K$, the stationary distribution $P$, the objective function $g$ and the value of $q$, but does not depend on $n$.
The same result holds if $W_{SJ}$ is replaced by $W_{S}$, $W_{SB}$ or $W_{B}$ and the coefficients $\bar{c}_j^{(SJ,K)},j=1,2,\dots,r$ are be replaced with a different set of coefficients corresponding to each method.
\end{thm}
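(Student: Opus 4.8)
The plan is to reduce the dependent-data case to the independent-batch machinery already established in Theorem \ref{thm: validity}, exploiting the gap to render the batches asymptotically independent up to an error absorbed by the mixing hypothesis. Following the remark preceding this section, it suffices to show that the scaled batch averages $Z_i := \sqrt{n}(\bar{g}_i - E g(X_1))/\sigma$, $i=1,\dots,K$, admit a valid \emph{joint} Edgeworth expansion of product form; once this is in hand, the smooth-function-model argument of Theorem \ref{thm: validity} (the implicit-function-theorem reformulation of $\{W_{SJ}\le q\}$ as $A_{0,1}\le F_+^n$, the conditioning on the first coordinate, the Taylor expansion of the critical value, and the integration against the joint expansion) applies verbatim, with $K\ge r+3$ again guaranteeing finiteness of the inverse-chi-squared moments of $K-1$ degrees of freedom.

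First I would record the marginal expansion. By strict stationarity each batch is a length-$n$ contiguous block of the chain, so Theorem 1 of \citet{jensen1989asymtotic} applies under the stated recurrent-atom, moment ($E\tau^{r+3}<\infty$, $EG^{r+3}<\infty$), and uniform Cramer conditions, yielding for each fixed $i$ an Edgeworth expansion for $P(Z_i\le q)$ of the form \eqref{eq: Edgeworth assumption} with remainder $O(n^{-(r+1)/2})$, long-run variance $0<\sigma<\infty$, and the usual polynomial parity.

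Next I would pass from the marginal to the joint expansion by decoupling the batches. Let $(\tilde Z_1,\dots,\tilde Z_K)$ be independent surrogates, $\tilde Z_i$ having the law of $Z_i$. Because the $\tilde Z_i$ are independent, the product of the marginal Jensen expansions is a genuine joint Edgeworth expansion with nonsingular (diagonal) limiting covariance $\sigma^2 I_K$, so Theorem \ref{thm: validity} yields the claimed expansion for the statistic $\tilde W_{SJ}$ built from $\tilde Z$. It then remains to control $|P(W_{SJ}\le q)-P(\tilde W_{SJ}\le q)|$. Since $\{W_{SJ}\le q\}$ is measurable with respect to the $K$ batch blocks and consecutive blocks are separated by the gap of $n^{\delta}$ discarded observations, a telescoping application of the $\alpha$-mixing inequality (swapping one block at a time for an independent copy across a gap) bounds this difference by $(K-1)\,\alpha(n^{\delta})$. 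The hypothesis $n^{(r+1)/(2\delta)}\alpha(n)\to 0$ is calibrated exactly so that $\alpha(n^{\delta})=o(n^{-(r+1)/2})$: writing $m=n^{\delta}$ gives $\alpha(m)=o\big(m^{-(r+1)/(2\delta)}\big)=o(n^{-(r+1)/2})$. Hence the decoupling error is absorbed into the $O(n^{-(r+1)/2})$ remainder, and the coefficients $\bar c_j^{(SJ,K)}$ are precisely those produced by the independent surrogate; the argument is identical for $W_S,W_{SB},W_B$, only the limiting coefficients changing.

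The main obstacle is the decoupling step. The coefficient $\alpha(\cdot)$ controls $|P(A\cap B)-P(A)P(B)|$ for events on opposite sides of a gap, but $\{W_{SJ}\le q\}$ couples all $K$ blocks simultaneously through a nonlinear map, so I must verify that iterating the mixing bound across the $K-1$ gaps yields the clean total-variation bound $(K-1)\alpha(n^{\delta})$ \emph{uniformly} in $q$, rather than a bound that degrades with the geometric complexity of the region $\{f(\mathbf{z})\le q\}$, and that the contiguous-block-with-discarded-data structure indeed places the relevant sub-$\sigma$-fields at separation $n^{\delta}$ so that $\alpha(n^{\delta})$ is the correct quantity. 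A secondary check is that Jensen's expansion supplies the same polynomial parity assumed in \eqref{eq: Edgeworth assumption}, so that the reduction to Theorem \ref{thm: validity} is literally applicable.
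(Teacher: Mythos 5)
Your proposal follows essentially the same route as the paper's proof: establish the marginal Edgeworth expansion for a single batch average via Theorem 1 of \citet{jensen1989asymtotic}, decouple the $K$ batches across the $n^{\delta}$ gaps by a telescoped $\alpha$-mixing bound costing at most $K\alpha(n^{\delta})=o(n^{-(r+1)/2})$, and then run the machinery of Theorem \ref{thm: validity} on the resulting product-form expansion. Even the obstacle you flag---that $\alpha$-mixing directly controls pairs of events rather than the joint law on the non-rectangular region $\{W_{SJ}\leq q\}$---appears in the same form in the paper, which states the factorization only for product sets (its display \eqref{eq: K weak dependence}) and passes to the coverage event by ``doing integration'' without further elaboration.
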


Theorem \ref{thm: validity dependent} stipulates that for a dependent data sequence satisfying proper conditions, the coverage error also has a valid expansion similar to Theorems \ref{thm: batching_expansion} and \ref{thm: validity}. In Theorem \ref{thm: validity dependent}, we didn't make a claim that the coverage error of a symmetric CI is of order $O(n^{-1})$ since the expansion in \cite{jensen1989asymtotic} doesn't discuss the oddness and evenness of the polynomials in the Edgeworth expansion. But as long as we have this oddness and evenness, we can show that the coverage error for a symmetric CI is order $O(n^{-1})$ (see the second claim of Theorem \ref{thm: validity dependent 2} next). 

We discuss the conditions introduced in Theorem \ref{thm: validity dependent}. The Harris recurrence condition can be seen as a generalization
of positive recurrent Markov chains with countable states while preserving
some good properties for a rich theory. This condition can be easily verified in some settings beyond countable
state space: for example, for the waiting time process in a queue, we can let $\mu$ in Definition \ref{def: Harris} (and also the recurrent atom as required in Theorem \ref{thm: validity dependent}) be concentrated on the state
``0'', then we can verify the Harris recurrence as long as ``0'' is a recurrent state. Similar setups using positive recurrent Harris
chains also appear in IV.6b of \citet{Asmussen2007}. 
More generally, the Doeblin chains can be seen as special cases of
recurrent Harris chains \citet{malinovskii1987limit}. Practical examples
include M/G/1  waiting time processes, AR(1)
processes, and more general queueing systems
as considered in the numerical examples of \citet{chien1994batch,alexopoulos1996implementing,steiger2002improved,alexopoulos2013sequential}.
Conditions that can guarantee the mixing condition $n^{\frac{r+1}{2\delta}}\alpha(n)\rightarrow 0$ in the setting of Harris recurrent Markov chains can be found
in  Section 3.2 of \citet{duchi2021statistics}. Under those conditions, we actually have $\alpha(n)=c^n$ for some $c\in(0,1)$, so $n^{\frac{r+1}{\delta}}\alpha(n)\rightarrow 0$ holds for arbitrary $\delta>0$. Other assumptions on the finiteness
of moments and Cramer condition are common in the study of Edgeworth
expansion.

The following theorem does not require the existence of
atom as in Theorem \ref{thm: validity dependent}, but involves more complex notations in the statement. Moreover, we are able to conclude that the coverage error for a symmetric CI is of order $O(n^{-1})$ thanks to the knowledge of the oddness and evenness of the polynomials in the Edgeworth expansion in \cite{malinovskii1987limit}. 
\begin{thm}[Coverage error expansions for batching methods on dependent data with gaps between successive batches]\label{thm: validity dependent 2}
Suppose that $X_{i}$, $i=1,2,\dots$ is a Harris-recurrent Markov chain and is strictly stationary. Moreover, suppose that $n^{\frac{r+1}{2\delta}}\alpha(n)\rightarrow 0$ for some positive integer $r$. 
Let $P$ be the stationary distribution. Suppose that there exist
a set $C$, a positive $\lambda$ and a probability measure $\varphi_{C}$
concentrated on $C$ such that
\[
P_{x}\left( \cup_{i=1}^{\infty}\{X_{i}\in C\}\right) =1
\]
for any $x$ and 
\[
P_{x}(X_{2}\in A)\geq\lambda\varphi_{C}(A)
\]
 for any $x\in C,A\subset C$. For any distribution $\alpha$,
define $P_{\alpha,\lambda}$ as the measure of the process $\{(X_i,b_i),i=1,2,\dots\}$ with initial distribution
\[
P_{\alpha,\lambda}(X_{1}\in dx,b_{1}=\delta)=\alpha(dx)(\lambda\delta+(1-\lambda)(1-\delta))
\]
and transition probability
\[
P_{\alpha,\lambda}\left((x,1),A\times\{\delta\}\right)=\begin{cases}
(\lambda\delta+(1-\lambda)(1-\delta))P(x,A) & x\notin C\\
(\lambda\delta+(1-\lambda)(1-\delta))\varphi_{C}(A) & x\in C
\end{cases}
\]
\[
P_{\alpha,\lambda}\left((x,0),A\times\{\delta\}\right)=\begin{cases}
(\lambda\delta+(1-\lambda)(1-\delta))P(x,A) & x\notin C\\
(\lambda\delta+(1-\lambda)(1-\delta))Q(x,A) & x\in C
\end{cases}
\]
where $P(x,\dot)$ is the transition probability of $X_i,i=1,2,\dots$ and $Q(x,\cdot)=(1-\lambda)^{-1}(P(x,\cdot)-\lambda\varphi_{C}(\cdot))$.
Let
\[
\tau=\min\{n>0:X_{n}\in C,b_{n}=1\},\tilde{g}(x)=g(x)-Eg(X_{1}),\Sigma_{g,n}=n^{-1/2}\sum_{i=1}^{n}\tilde{g}(X_{i}),G=\sum_{i=1}^{\tau}\tilde{g}(X_{i}).
\]
Suppose that

(i) $\limsup_{\left|t\right|\rightarrow\infty}\left|E_{\varphi_{C},\lambda}\exp(itG)\right|<1$.

(ii) $\sigma_{G}:=E_{\varphi_{C},\lambda}G^{2}>0$

(iii) $E_{\varphi_{C},\lambda}\tau^{r+3}<\infty,E_{P,\lambda}\tau^{r+1}<\infty$

(iv) $E_{\varphi_{C},\lambda}\left(\sum_{i=1}^{\tau}\left|g(X_{i})\right|\right)^{r+3}<\infty,E_{P,\lambda}\left(\sum_{i=1}^{\tau}\left|g(X_{i})\right|\right)^{r+1}<\infty$

Then,
\begin{itemize}
    \item If $K\geq r+3$, then for any $q\in\mathbb{R}$, there exists $\bar{\bar{c}}_j^{(SJ,K)}\in\mathbb{R},j=1,2,\dots,r$, such that  $$P(W_{SJ}\leq q)=P(t_{K-1}\leq q)+\sum_{j=1}^rn^{-j/2}\bar{\bar{c}}_j^{(SJ,K)} + O(n^{-(r+1)/2})$$
    Here the coefficients $\bar{\bar{c}}_{j}^{(SJ,K)}$ depends on $K$, the stationary distribution $P$, the objective function $g$ and the value of $q$, but does not depend on $n$.
\item  Suppose that $K\geq 4$. Then $P\left(-q\leq W_{SJ}\leq q\right)=P(-q\leq t_{K-1}\leq q)+O(n^{-1})$.
\end{itemize}
The same result holds if $W_{SJ}$ is replaced by $W_{S}$, $W_{SB}$ or $W_{B}$ and the coefficients $\bar{\bar{c}}_j^{(SJ,K)},j=1,2,\dots,r$ are be replaced with a different set of coefficients corresponding to each method.

\end{thm}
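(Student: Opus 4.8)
The plan is to reduce this theorem to the coverage-error machinery already developed for Theorem~\ref{thm: validity}, so that the only genuinely new work is the construction of a valid \emph{joint} Edgeworth expansion for the scaled batch averages $\tilde{\Lambda}=(\sqrt{n}(\bar{g}_k-Eg(X_1)))_{k=1}^{K}$ in the absence of a true recurrent atom. The architecture mirrors Theorems~\ref{thm: validity} and~\ref{thm: validity dependent}: once $\tilde{\Lambda}$ has a joint Edgeworth expansion of product (near-independence) form with the correct polynomial parity, each of $W_B,W_S,W_{SB},W_{SJ}$ is, under the smooth function model $\psi(\bar{P})=f(E_{\bar{P}}g(X))$, a function of $\tilde{\Lambda}$, and the conditioning plus implicit-function-theorem argument sketched before Theorem~\ref{thm: validity} applies essentially verbatim.

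First I would produce the single-batch marginal expansion. The minorization hypothesis $P_x(X_2\in A)\geq\lambda\varphi_C(A)$ on $C$ together with $P_x(\cup_i\{X_i\in C\})=1$ is exactly what is needed to run the splitting construction encoded by the augmented law $P_{\alpha,\lambda}$: on the split chain $\{(X_i,b_i)\}$, whose $X$-marginal reproduces the original chain, the set $\{(x,1):x\in C\}$ behaves as a genuine recurrent atom whose hitting times $\tau$ cut the chain into i.i.d. regeneration cycles. On this regenerative representation, conditions (i)--(iv) are precisely the hypotheses under which \citet{malinovskii1987limit} furnishes an Edgeworth expansion for the normalized partial sum $\Sigma_{g,n}$ with residual $O(n^{-(r+1)/2})$; crucially, that expansion records the parity of the polynomials (even index yielding an odd polynomial and vice versa), which is the property that was unavailable in Theorem~\ref{thm: validity dependent}. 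This is the single-batch analogue of the hypothesis~\eqref{eq: Edgeworth assumption}.

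Next I would upgrade this to a joint expansion using the gap. Because $n^{\delta}$ observations are discarded between consecutive batches, the dependence between two adjacent batch averages is governed by the across-gap mixing coefficient $\alpha(n^{\delta})$. Writing $m=n^{\delta}$, the hypothesis $n^{\frac{r+1}{2\delta}}\alpha(n)\to 0$ says precisely that $n^{(r+1)/2}\alpha(n^{\delta})=m^{(r+1)/(2\delta)}\alpha(m)\to 0$, so the error incurred in replacing the joint law of $\tilde{\Lambda}$ by the product of its $K$ marginals---a sum of at most $K-1$ dominant across-gap mixing terms for fixed $K$---is $o(n^{-(r+1)/2})$ and is absorbed into the residual. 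Hence the joint distribution of $\tilde{\Lambda}$ factorizes into a product of marginal Edgeworth expansions up to $O(n^{-(r+1)/2})$, which is exactly the input consumed by the proof of Theorem~\ref{thm: validity}.

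With the joint expansion secured, the two claims follow by the earlier steps. For the first, I would invoke the conditioning and implicit-function-theorem argument of Appendix~\ref{sec: proof_validity}: reformulate $W_{SJ}\leq q$ as $A_{0,1}\leq F_{+}^{n}$, expand $F_{+}^{n}$ in powers of $n^{-1/2}$, and integrate against the product-form expansion for $\tilde{\Lambda}$, where $K\geq r+3$ guarantees the finiteness of the inverse-chi-squared-type moments that appear; the same argument covers $W_S,W_{SB},W_B$ with method-specific coefficients. For the symmetric claim, the parity of the Malinovskii polynomials lets me rerun the evenness/oddness cancellation of Theorem~\ref{thm: batching_expansion}: the factor multiplying $n^{-1/2}$ is an odd function while the region $\{-q\leq f(\mathbf{z})\leq q\}$ is symmetric about the origin, so the $n^{-1/2}$ coefficient vanishes and the error is $O(n^{-1})$, requiring only $K\geq 4$. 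I expect the main obstacle to lie in the first two steps---verifying that the split-chain regeneration cycles meet Malinovskii's hypotheses with the stated moment bounds (the $E_{P,\lambda}$ conditions in (iii)--(iv) are what control the initial and terminal incomplete cycles) and that the gap delivers near-independence uniformly in $q$ to the claimed order. Once the factorized joint expansion with the correct parity is in hand, the passage to coverage errors is the bookkeeping already carried out for Theorem~\ref{thm: validity}.
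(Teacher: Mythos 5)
Your proposal is correct and follows essentially the same route as the paper's proof: the gap together with the mixing hypothesis gives factorization of the joint law of the batch averages into the product of marginals up to $o(n^{-(r+1)/2})$, the split-chain construction makes the expansion of \citet{malinovskii1987limit} (with its known polynomial parity) available for the marginals, and the conditioning/implicit-function machinery of Theorem \ref{thm: validity} plus the oddness--evenness cancellation then deliver both claims. No substantive differences or gaps.
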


While the choices of $C$ and $\lambda$ in the statement of Theorem \ref{thm: validity dependent 2} seem to have some freedom,
it is shown in \citet{malinovskii1987limit} that any choice would
lead to the same Edgeworth expansion. 

\subsection{Approach 2: Decompose into regenerative cycles}

Suppose that $X_{1},X_{2},\dots$ has a recurrent state $a_{0}$.
Let $T_{1}=\inf\left\{ k>0:X_{k}=a_{0}\right\} $ and for $i\geq2$,
let $T_{i}=\inf\left\{ k>T_{i-1}:X_{k}=a_{0}\right\} $. Let $Y_{i}=\sum_{k=T_{i}}^{T_{i+1}-1}g(X_{k})$.
We may regard $\left(Y_{i},T_{i+1}-T_{i}\right)$
as the new data and perform batching for these data. For example,
suppose that we are interested in $E_{\pi}g(X_{1})$, where $\pi$ is the stationary distribution. Note that  $E_{\pi}g(X_{1})=\frac{EY_{i}}{E\left(T_{i+1}-T_{i}\right)}$ which belongs to
the family of smooth function models. Therefore, if we suppose that
$\left(Y_{i},T_{i+1}-T_{i}\right)$ satisfies the same assumptions
as $X_{i}$ in Theorem \ref{thm: validity} and other conditions therein are satisfied, then all the claims in Theorem
\ref{thm: validity} would hold. 

More concretely, we construct the CIs for $\psi:=E_{\pi}g(X_{1})$ in the following way. Let $Q_i:=\left(Y_{i},T_{i+1}-T_{i}\right),i=1,2,\dots,nK$. Let $\hat{P},\hat{P}_i$ and $\hat{P}_{(i)}$ denote the entire, batched and leave-one-batch-out empirical distributions of $\{Q_i\}_{i=1}^{nK}$ as introduced in Section \ref{sec: methods}. For any two-dimensional distribution ${\bar{P}}$, we define $\psi({\bar{P}})=\frac{E_{\bar{P}} [Q^{(1)}]}{E_{\bar{P}}[Q^{(2)}]}$ where $Q\sim {\bar{P}}$ and for $j=1,2$, $Q^{(j)}$ stands for the $j$-th coordinate of $Q$. Note that the target value can be written as $\psi=\frac{EQ_1^{(1)}}{EQ_1^{(2)}}$ (where the expectation is taken under the true distribution of $Q_1$). Construct $CI_B,CI_S,CI_{SB},CI_{SJ}$ and define the corresponding statistics $W_{B},W_{S},W_{SB},W_{SJ}$ in terms of $\psi(\cdot),\hat{P},\hat{P}_i,\hat{P}_{(i)}$ using the same formulas as in Section \ref{sec: methods}. 

We have the following theorem regarding the validity of expansion:
\begin{thm}[Coverage error expansions for batching methods on dependent data using regenerative cycles]\label{thm: validity regenerate}
 Suppose that 
the following Cramer's condition
holds for the distribution of $Q_i$: 
\begin{equation}\label{eq: cramerQ}
\limsup_{\left|t\right|\rightarrow\infty}\left|E[\exp\{i\left<t,Q_1\right>\}]\right|<1,
\end{equation}
Suppose that for some positive integer $r$, $Q_1$ has finite moments up to order
$r+2$ with nonsingular covariance. Suppose that $E[T_2-T_1]>0$.  Then:
\begin{itemize}
    \item If $K\geq r+3$, then for any $q\in\mathbb{R}$, there exists $\tilde{c}_j^{(SJ,K)}\in\mathbb{R},j=1,2,\dots,r$, such that  $$P(W_{SJ}\leq q)=P(t_{K-1}\leq q)+\sum_{j=1}^rn^{-j/2}\tilde{c}_j^{(SJ,K)} + O(n^{-(r+1)/2})$$
    Here the coefficients $\tilde{c}_{j}^{(SJ,K)}$ depends on $K$, the true distribution of $Q_1$, and the value of $q$, but does not depend on $n$.
\item Suppose that $K\geq 4$. Then we have $P\left(-q\leq W_{SJ}\leq q\right)=P(-q\leq t_{K-1}\leq q)+O(n^{-1})$.
\end{itemize}
The same result holds if $W_{SJ}$ is replaced by $W_{S}$, $W_{SB}$ or $W_{B}$ and the coefficients $\tilde{c}_j^{(SJ,K)},j=1,2,\dots,r$ are be replaced with a different set of coefficients corresponding to each method.
\end{thm}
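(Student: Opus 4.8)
The plan is to reduce the claim entirely to Theorem~\ref{thm: validity} by recognizing that, once the data are recast as regenerative cycles, the estimation problem is exactly an i.i.d.\ smooth function model in dimension $d=2$. Concretely, I would take $Q_1=(Y_1,T_2-T_1)\in\mathbb{R}^2$ as the basic observation and set $f(x,y)=x/y$, so that $\psi=E_\pi g(X_1)=EQ_1^{(1)}/EQ_1^{(2)}=f(EQ_1)$ and therefore $\psi(\bar P)=f(E_{\bar P}Q)$ is precisely of the form required by Theorem~\ref{thm: validity}. The batch, section, SB and SJ statistics built from the cycle empirical distributions $\hat P,\hat P_i,\hat P_{(i)}$ are then literally the statistics $W_B,W_S,W_{SB},W_{SJ}$ of that theorem applied to the $Q_i$'s.

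First I would establish that $Q_1,\dots,Q_{nK}$ are i.i.d. This is the regenerative structure: by the strong Markov property applied at each return time $T_i$ to the recurrent state $a_0$, the blocks $(X_{T_i},\dots,X_{T_{i+1}-1})$ are i.i.d., and since $Q_i=(Y_i,T_{i+1}-T_i)$ is a fixed measurable functional of the $i$-th block, the $Q_i$ inherit independence and identical distribution. Thus the $nK$ cycles divided into $K$ batches of size $n$ constitute exactly the i.i.d.\ data structure underlying Theorem~\ref{thm: validity}, and the regime $n\to\infty$ with $K$ fixed is the growth of the number of cycles per batch.

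Next I would verify the hypotheses of Theorem~\ref{thm: validity} for this $(Q_1,f)$. The Cramer condition~\eqref{eq: cramer} for $Q_1$ is exactly the assumed~\eqref{eq: cramerQ}; finiteness of moments up to order $r+2$ and nonsingularity of the covariance of $Q_1$ are assumed directly. It remains to check smoothness and the nonvanishing gradient of $f(x,y)=x/y$ at $EQ_1=(EY_1,E(T_2-T_1))$. Since $T_{i+1}>T_i$ we have $E(T_2-T_1)>0$ (as assumed), so the second coordinate of $EQ_1$ is strictly positive; hence $f$ is $C^\infty$, in particular $r+1$ times differentiable, on a neighborhood of $EQ_1$ where the second coordinate stays positive. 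Moreover $\nabla f(EQ_1)=\bigl(1/E(T_2-T_1),\,-EY_1/(E(T_2-T_1))^2\bigr)$ has nonzero first component, so $\nabla f(EQ_1)\neq 0$. With all hypotheses in force, Theorem~\ref{thm: validity} delivers $P(W_{SJ}\leq q)=P(t_{K-1}\leq q)+\sum_{j=1}^r n^{-j/2}\tilde c_j^{(SJ,K)}+O(n^{-(r+1)/2})$ under $K\geq r+3$ with coefficients depending only on $K$, the law of $Q_1$ and $q$, and the symmetric-CI statement $P(-q\leq W_{SJ}\leq q)=P(-q\leq t_{K-1}\leq q)+O(n^{-1})$ under $K\geq 4$. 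The identical conclusion for $W_S,W_{SB},W_B$ is the corresponding clause of Theorem~\ref{thm: validity}.

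Since the heavy lifting, namely the $t$-statistic Edgeworth machinery, the implicit-function/conditioning argument, and the oddness/evenness cancellation for symmetric intervals, is all contained in Theorem~\ref{thm: validity}, there is essentially no new analytic obstacle here; the work is conceptual bookkeeping. The one point demanding care is the clean verification that the regenerative blocks are genuinely i.i.d.\ and that the cycle-level moment and Cramer conditions transfer without loss to the hypotheses of Theorem~\ref{thm: validity}, in particular that the stated moment order $r+2$ on $Q_1$ is exactly what Theorem~\ref{thm: validity} consumes and that nonsingular covariance of $Q_1$ rules out the degenerate case $Y_1\equiv c\,(T_2-T_1)$. Once these are in place, the result follows immediately by specialization.
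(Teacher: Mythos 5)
Your proposal is correct and matches the paper's own treatment: the paper proves Theorem~\ref{thm: validity regenerate} exactly by this reduction, regarding the i.i.d.\ regenerative cycles $Q_i=(Y_i,T_{i+1}-T_i)$ as the data and the ratio $\psi = EQ_1^{(1)}/EQ_1^{(2)}$ as a smooth function model, then invoking Theorem~\ref{thm: validity}. Your verification of the hypotheses (Cramer condition, moments, smoothness and nonvanishing gradient of $f(x,y)=x/y$ at $EQ_1$ using $E[T_2-T_1]>0$) is precisely the bookkeeping the paper leaves implicit.
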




\section{An algorithm to estimate coefficients of the $n^{-1}$ error via simulation}\label{sec: alg}

For this section, we mainly focus on the symmetric CIs whose errors are of order $n^{-1}$ by Theorem \ref{thm: validity}. Consider the smooth function model where we want to construct a CI for $f(EX)$. Recall that Theorem \ref{thm: validity} implies that 
\begin{equation}\label{eq: c def}
    P(-q\leq W_{\cdot}\leq q)=P(-q\leq t_{K-1}\leq q) + cn^{-1} + O(n^{-3/2})
\end{equation}
where $W_{\cdot}$ can be either one of $W_S,W_B,W_{SB}$ and $W_{SJ}$ and $c$ is a constant that depends on the method used, the number of batches $K$, the underlying distribution $P$, the function $f$, and the critical point $q$, but does not depend on $n$. For some very simple examples, we can calculate $c$ explicitly (see Section \ref{sec: explicit examples}). However, in general, $c$ involves some integration that cannot be calculated explicitly. In this section, we give
a simulation scheme to generate an unbiased estimator for $c$ for each batching method. Our proposed algorithm is Algorithm \ref{alg: error estimation} whose construction follows closely from the proof of Theorem \ref{thm: validity}. The polynomials $p_1$ and $p_2$ in Algorithm \ref{alg: error estimation} are defined as follows, which are due to \cite{skovgaard1986multivariate}:
\begin{equation}\label{eq: Edgeworth formula}
    \begin{array}{l}
    p_1(x)=\phi_{\Sigma}(x)\left[1+\frac{1}{6} \chi_{i j k} \sigma^{i \alpha} \sigma^{j \beta} \sigma^{k \gamma} x_{\alpha} x_{\beta} x_{\gamma}-\frac{1}{2} \sigma^{i j} \chi_{i j k} \sigma^{k \alpha} x_{\alpha}\right]\\
p_2(x)=\phi_{\Sigma}(x)\left[\frac{1}{24} \chi_{i j k l}\left\{\sigma^{i \alpha} \sigma^{j \beta} \sigma^{k \gamma} \sigma^{l \delta} x_{\alpha} x_{\beta} x_{\gamma} x_{\delta}-6 \sigma^{i \alpha} \sigma^{j \beta} \sigma^{k l} x_{\alpha} x_{\beta}+3 \sigma^{i j} \sigma^{k l}\right\}\right. \\
\quad+\frac{1}{72} \chi_{i j k} \chi_{l m n}\left\{\sigma^{i \alpha} \sigma^{j \beta} \sigma^{k \gamma} \sigma^{l \delta} \sigma^{m \varepsilon} \sigma^{n \tau} x_{\alpha} x_{\beta} x_{\gamma} x_{\delta} x_{\varepsilon} x_{\tau}\right. \\
\quad-\left(6 \sigma^{i \alpha} \sigma^{j \beta} \sigma^{k \gamma} \sigma^{l \delta} \sigma^{m n}+9 \sigma^{i \alpha} \sigma^{j \beta} \sigma^{l \gamma} \sigma^{m \delta} \sigma^{k n}\right) x_{\alpha} x_{\beta} x_{\gamma} x_{\delta} \\
\quad+\left(9 \sigma^{i \alpha} \sigma^{l \beta} \sigma^{j k} \sigma^{m n}+18 \sigma^{i \alpha} \sigma^{j \beta} \sigma^{k l} \sigma^{m n}+18 \sigma^{i \alpha} \sigma^{l \beta} \sigma^{j m} \sigma^{k n}\right) x_{\alpha} x_{\beta} \\
\left.\left.\quad-\left(9 \sigma^{i j} \sigma^{k l} \sigma^{m n}+6 \sigma^{i l} \sigma^{j m} \sigma^{k n}\right)\right\}\right]
\end{array}
\end{equation}

Here $\phi_{\Sigma}(\cdot)$ is the density of $N(0,\Sigma)$, $\sigma^{ij}$ are the coordinates of matrix $\Sigma^{-1}$, and $\chi_{ijk}$ and $\chi_{ijkl}$ stand for the joint cumulants
of the coordinates of $X$. The Einstein summation convention is used here, which means we sum over repeated indices. For example, we omit the summation symbol $\sum_{1\leq i,j,k,\alpha,\beta,\gamma\leq d}$ when we write $\chi_{i j k} \sigma^{i \alpha} \sigma^{j \beta} \sigma^{k \gamma} x_{\alpha} x_{\beta} x_{\gamma}$. 

The following proposition asserts the correctness of the algorithm. The proof uses similar techniques as the proof of Theorem \ref{thm: validity}, but with more explicit algebra.

\begin{thm}[Unbiasedness of simulation algorithm to estimate coefficients of $n^{-1}$]\label{prop: alg correctness}
Suppose that the conditions of Theorem \ref{thm: validity} hold with $r=2$, and the expansion is given as \eqref{eq: c def} where $W_\cdot$ can be any of $W_S$, $W_{B}$, $W_{SB}$ and $W_{SJ}$. Then, Algorithm \ref{alg: error estimation} returns an unbiased estimator for $c$.
\end{thm}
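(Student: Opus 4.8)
The plan is to make explicit the closed form of the $n^{-1}$ coefficient $c$ that the proof of Theorem \ref{thm: validity} produces only implicitly, and then to recognize this closed form as the expectation of a computable random variable under a fixed Gaussian law. Once $c$ is written as such an expectation, the unbiasedness of the sample average formed in Algorithm \ref{alg: error estimation} is immediate from linearity of expectation. Thus the real content is the explicit-algebra step of identifying $c$ with a simulatable integrand, exactly the ``more explicit algebra'' the statement alludes to.

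First I would revisit the derivation behind Theorem \ref{thm: validity}, specialized to $r=2$ and to the symmetric event $\{-q\leq W_\cdot\leq q\}$. Following the conditioning argument described before Theorem \ref{thm: validity}, each statistic $W_\cdot$ is written as a function of the rescaled batch means $\tilde\Lambda=(\sqrt n(\bar X_i-E_PX))_{i=1}^K$, and the event is recast via the critical surface $A_{0,1}\leq F_+^n(A')$ (together with its lower counterpart). Expanding $F_+^n$ to second order in $n^{-1/2}$ by the implicit function theorem and integrating against the joint Edgeworth expansion of $A$, whose $n^{-1/2}$ and $n^{-1}$ polynomial corrections are precisely the $p_1$ and $p_2$ of \eqref{eq: Edgeworth formula}, one collects the coefficient of $n^{-1}$. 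Because the CI is symmetric, the oddness/evenness argument of Theorem \ref{thm: batching_expansion} annihilates the $n^{-1/2}$ contribution, so that $c$ is a sum of three $n$-free pieces: an integral of the $p_2$-type correction over the fixed leading-order acceptance region, a contribution quadratic in $p_1$ interacting with the first-order Taylor coefficient of $F_+^n$, and boundary terms coming from the first- and second-order Taylor coefficients of $F_+^n$. Crucially, every ingredient is a fixed function of the cumulants $\chi_{ijk},\chi_{ijkl}$, the derivatives of $f$ at $E_PX$, the covariance $\Sigma$, and $q$; none depends on $n$. This also explains why Algorithm \ref{alg: error estimation} requires both $p_1$ and $p_2$.

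Second, I would rewrite $c$ as a single expectation under a fixed probability law. Factoring the Gaussian density $\phi_{\Sigma}$ out of the polynomial Edgeworth corrections, each of the three pieces becomes $E[h_j(\mathbf Z)]$, where $\mathbf Z=(Z_1,\dots,Z_K)$ consists of $K$ i.i.d.\ copies of $N(0,\Sigma)$ (the batch-level Gaussian limit), and $h_j$ multiplies the indicator of the fixed acceptance region by the relevant polynomial correction and implicit-function Taylor coefficients. Summing, $c=E[h(\mathbf Z)]$ with $h=\sum_j h_j$ explicitly computable from the primitives above. Algorithm \ref{alg: error estimation} draws i.i.d.\ copies $\mathbf Z^{(1)},\dots,\mathbf Z^{(M)}$ from exactly this law and returns $M^{-1}\sum_m h(\mathbf Z^{(m)})$, whose mean is $c$ provided $h(\mathbf Z)$ is integrable. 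I would then check line by line that the integrand built in the algorithm coincides with $h$, so that it reproduces $c$ and nothing else.

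The main obstacle is this integrability, together with the bookkeeping that matches the analytic coefficient to the quantity the algorithm computes. The terms in $h$ arising from the variance estimators $S_\cdot^2$ involve inverse powers of a chi-squared-type quadratic form in $\mathbf Z$ with $K-1$ degrees of freedom, and the implicit-function derivatives introduce further negative powers; controlling $E|h(\mathbf Z)|$ therefore reduces to finiteness of moments of an inverse chi-squared with $K-1$ degrees of freedom. This is exactly where the hypothesis $K\geq r+3=5$, inherited from the conditions of Theorem \ref{thm: validity} with $r=2$, is used. I would verify this integrability term by term and confirm that the oddness/evenness cancellation of the $n^{-1/2}$ piece carries over verbatim to the present symmetric event. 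The remaining work, namely carrying out the explicit Taylor expansion of $F_+^n$ and matching it to the construction of $h$, is tedious but routine.
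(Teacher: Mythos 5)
Your proposal follows essentially the same route as the paper's own proof: specialize the machinery of Theorem \ref{thm: validity} to $r=2$, expand the critical values $F_{\pm}^{(n)}$ to second order via the implicit function theorem, integrate against the joint Edgeworth density (whose corrections are built from the $p_1,p_2$ of \eqref{eq: Edgeworth formula} via the linear transformation to $(A_0,B_1,\dots,B_{K-1})$), collect the $n^{-1}$ coefficient as a fixed-region integral plus boundary terms, and recognize this as the Gaussian expectation of exactly the integrand $ER$ that the algorithm simulates, with integrability secured by inverse-chi-squared moments under $K\geq 5$. The only cosmetic differences are bookkeeping (the paper's \eqref{eq: error4sum} has four terms rather than your three, and the vanishing of the $n^{-1/2}$ term is already guaranteed by \eqref{eq: c def} rather than needing a fresh oddness/evenness argument), so the proposal is correct.
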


\begin{algorithm}[htp]
\caption{An unbiased simulation scheme to compute the coefficient of the $n^{-1}$ error term}
\label{alg: error estimation}
\begin{algorithmic}[1]
\Ensure Derivatives of $f$ at $EX_1$ up to order 3, cumulants of $X_1$ up to order 4, testing statistic $W\cdot$ ($W_\cdot$ can be either of $W_S,W_B,W_{SB}$ and $W_{SJ}$ and is regarded as a function of batch averages and $n$)

\State Let $\Sigma=Var(X_1)$. Simulate $X_{1},X_{2},\dots,X_{K}\stackrel{\text{i.i.d.}}{\sim}N(0,\Sigma)$.
Let $A_{0}=\frac{X_{1}+\dots+X_{K}}{K},B_{i}=X_{i}-A_{0},i=1,2,\dots,K$. 
\State Perform Taylor's expansion on $W_\cdot$ regarding $X_1,\dots,X_K$ as the batch averages,
and write it as $W_\cdot=\sqrt{K(K-1)}\frac{a+n^{-1/2}b_{1}+n^{-1}b_{2}}{\sqrt{E_{2}+n^{-1/2}\lambda+n^{-1}e}}+O_p(n^{-3/2})$.
Express each of $a, b_{1},b_{2},\lambda,e,E_{2}$ as a polynomial of $A_{0},B_{1},B_{2},\dots,B_{K}$ (the expressions depend on the batching scheme we use. The expressions for sectioning is included in Section \ref{sec: alg}. The expressions for other schemes are given in Section \ref{sec: expansion other schemes}). 
\State Compute the derivative of $b_{1}$ and $\lambda$
w.r.t. $A_{0,1}$ and denote them by $b_{1}^{\prime}$ and $\lambda^{\prime}$ respectively. Then let (denote $u=\nabla f(EX_1)$)
\[
F_{x}=\frac{b_{1}}{\sqrt{E_{2}}}-\frac{1}{2}\frac{\lambda a}{E_{2}\sqrt{E_{2}}},F_{xx}=\frac{1}{\sqrt{E_{2}}}\left(a\left(-\frac{e}{E_{2}}+\frac{3}{4}\frac{\lambda^{2}}{E_{2}^{2}}\right)-b_{1}\frac{\lambda}{c}+2b_{2}\right),
\]
\[
F_{y}=\frac{a^{\prime}}{\sqrt{E_{2}}},F_{xy}=\frac{b_{1}^{\prime}}{\sqrt{E_{2}}}-\frac{1}{2}\frac{\left(\lambda a\right)^{\prime}}{E_{2}\sqrt{E_{2}}},
\]
\[
F_{+}=\frac{q\sqrt{E_2}}{\sqrt{K(K-1)}}-\sum_{i=2}^d u_i A_{0,i}, F_{-}=\frac{-q\sqrt{E_2}}{\sqrt{K(K-1)}}-\sum_{i=2}^d u_i A_{0,i}.
\]
\State Compute
\[
y_{x}=-F_{x}/F_{y}|_{A_{0,1}=F_{+}},y_{xx}=-\left(F_{xx}+2F_{xy}y_{x}\right)/F_{y}|_{A_{0,1}=F_{+}},
\]
\[
y_{x}^{(-)}=-F_{x}/F_{y}|_{A_{0,1}=F_{-}},y_{xx}^{(-)}=-\left(F_{xx}+2F_{xy}y_{x}\right)/F_{y}|_{A_{0,1}=F_{-}}.
\]
\State Derive the error term estimator:
\begin{align*}
 ER = & I_C(A)\left[\sum_{1\leq i<j\leq K}p_{1}(x_{i}(A))p_{1}(x_{j}(A))+\sum_{1\leq i\leq K}p_{2}(x_{i}(A))\right]\\
 & \left(\begin{array}{c}
\phi_{\tilde{\sigma}_{0}}(F_{+}-\mu)\left(p_{1}(x(A))\right)|_{A_{0,1}=F_{+}}\left(y_{x}\right)\\
-\phi_{\tilde{\sigma}_{0}}(F_{-}-\mu)\left(p_{1}(x(A))\right)|_{A_{0,1}=F_{-}}\left(y_{x}^{(-)}\right)
\end{array}\right)+\left(\begin{array}{c}
\phi_{\tilde{\sigma}_{0}}(F_{+}-\mu)\left(\frac{1}{2}y_{xx}\right)\\
-\phi_{\tilde{\sigma}_{0}}(F_{-}-\mu)\left(\frac{1}{2}y_{xx}^{(-)}\right)
\end{array}\right)\\
 & +\frac{1}{2}\left(\begin{array}{c}
\phi_{\tilde{\sigma}_{0}}(F_{+}-\mu)(-\left(F_{+}-\mu\right)/\tilde{\sigma}_{0})y_{x}^{2}\\
-\phi_{\tilde{\sigma}_{0}}^{\prime}(F_{-}-\mu)(-\left(F_{-}-\mu\right)/\tilde{\sigma}_{0})y_{x}^{(-)2}
\end{array}\right)
\end{align*}
Here, $(x_{1}(A),\dots,x_{K}(A))^{\top}=\left(\begin{array}{ccccc}
1 & 1\\
1 &  & 1\\
\dots &  &  & \dots\\
1 &  &  &  & 1\\
1 & -1 & -1 & \dots & -1
\end{array}\right)\left(\begin{array}{c}
A_{0}\\
B_{1}\\
\dots\\
B_{K-2}\\
B_{K-1}
\end{array}\right)$ and $C$ represents $-q\leq\sqrt{K(K-1)}\frac{[u,A_{0}]}{\sqrt{\sum_{i=1}^{K}\left(\begin{array}{c}
[u,A_{i}-A_{0}]\end{array}\right)^{2}}}\leq q$. The polynomials $p_1$ and $p_2$ are given in \eqref{eq: Edgeworth formula}. $\tilde{\sigma}_{0}=\left(\sigma_{0}-\sigma_{01}\sigma_{11}^{-1}\sigma_{10}\right)/K$,
$\sigma_{0}$ is the variance of the first coordinate of $X_1$, $\sigma_{11}$ is the variance of the last $d-1$ coordinates of $X_1$, and $\sigma_{01}$ is their covariance. $\mu=\sigma_{01}\sigma_{11}^{-1}A_{0}^{\prime}$ where $A_0^\prime$ is the vector of the last $d-1$ coordinates of $A_0$. $\phi_{\sigma}(\cdot)$ is the density of $N(0,\sigma)$.

\lastcon{An unbiased estimator of $c$ in \eqref{eq: c def} given by $ER$
}
\end{algorithmic}
\end{algorithm}

In what follows, we illustrate how to compute the polynomials $a,b_1,b_2,d,e,E_2,b_1^{\prime},d^{\prime}$ as required by Steps 2 and 3 of Algorithm \ref{alg: error estimation} using sectioning as an example. The computation for other schemes can be found in Section \ref{sec: expansion other schemes}.

Let $u=\nabla f,v=\nabla^{2}f/2,w=\nabla^{3}f/6$. Recall the definition of $A_0,B_i,i=1,2,\dots,K$ in Algorithm \ref{alg: error estimation}. Then 
\begin{align*}
W_{S} & =\frac{\sqrt{nK}\left(f\left(m+n^{-1/2}A_{0}\right)-f_{0}\right)}{\sqrt{\frac{1}{K-1}\sum_{i=1}^{K}\left(\sqrt{n}f\left(m+n^{-1/2}X_{i}\right)-\sqrt{n}f\left(m+n^{-1/2}A_{0}\right)\right)^{2}}}\\
 & =\sqrt{K(K-1)}\frac{[u,A_{0}]+n^{-1/2}[v,A_{0},A_{0}]+n^{-1}\left[w,A_{0},A_{0},A_{0}\right]}{\sqrt{\sum_{i=1}^{K}\left(\begin{array}{c}
[u,A_{i}-A_{0}]+n^{-1/2}[v,A_{i},A_{i}]-n^{-1/2}[v,A_{0},A_{0}]\\
+n^{-1}\left[w,A_{i},A_{i},A_{i}\right]-n^{-1}\left[w,A_{0},A_{0},A_{0}\right]
\end{array}\right)^{2}}}+O_p(n^{-3/2})
\end{align*}
Here, $[u,A_{m}];=u_{i}A_{m,i},[v,A_{m},A_{m}]=:v_{ij}A_{m,i}A_{m,j},[w,A_{m},A_{m},A_{m}]=:w_{ijk}A_{m,i}A_{m,j}A_{m,k}$.
The leading term of the denominator is $\sum_{i}[u,A_{i}-A_{0}]^{2}$.
The coefficient for $n^{-1/2}$ is 
\[
\lambda=2\sum_{i}[u,B_{i}]\left([v,B_{i}+A_{0},B_{i}+A_{0}]-[v,A_{0},A_{0}]\right)
\]
The coefficient for $n^{-1}$ is 
\[
e=\sum_{i}\left[\begin{array}{c}
     \left([v,B_{i}+A_{0},B_{i}+A_{0}]-[v,A_{0},A_{0}]\right)^{2}  \\
     +2[u,B_{i}]\left(\left[w,B_{i}+A_{0},B_{i}+A_{0},B_{i}+A_{0}\right]-\left[w,A_{0},A_{0},A_{0}\right]\right) 
\end{array}\right]
\]
We also have $a=[u,A_{0}],b_{1}=[v,A_{0},A_{0}],b_{2}=\left[v,A_{0},A_{0},A_{0}\right]$. 
The derivatives w.r.t. $A_{0,1}$ can be computed as
\[
b_{1}^{\prime}=2v_{11}A_{0,1}+2\sum_{j=2}^{d}v_{1j}A_{0,j}=2\sum_{j=1}^{d}v_{1j}A_{0,j}
\]
\[
\lambda^{\prime}=2\sum_{i}[u,B_{i}]2\left(\sum_{j=1}^{d}v_{1j}\left(B_{i,j}+A_{0,j}\right)-\sum_{j=1}^{d}v_{1j}A_{0,j}\right)=4\sum_{i}[u,B_{i}]\sum_{j=1}^{d}v_{1j}B_{i,j}.
\]
This gives all polynomials required by Steps 2 and 3 of Algorithm \ref{alg: error estimation}.

We also point out that when $K=2$, the proposed algorithm may not work for batching, although in this case the error of batching has a valid expansion by Theorem \ref{thm: batching_expansion}.  This is described in the next proposition.
\begin{prop}\label{prop: batching small K}
Consider the same setting as in Proposition \ref{prop: errorK2}. In this case, Algorithm \ref{alg: error estimation} does not give the correct coefficient of the $n^{-1}$ coverage error term of batching.
\end{prop}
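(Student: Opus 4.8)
The plan is to compute, in this Gaussian model, the genuine $n^{-1}$ coefficient $c$ (which is finite) and the constant returned by Algorithm~\ref{alg: error estimation}, and to check that they disagree. First note that $c$ is well defined here: for Gaussian data $\psi(\hat P_1)=\bar X_1+\bar Y_1^2$ satisfies the Edgeworth assumption~\eqref{eq: Edgeworth assumption} exactly, so Theorem~\ref{thm: batching_expansion} applies for every $K\ge2$ and yields $P(-q\le W_B\le q)=P(-q\le t_1\le q)+cn^{-1}+O(n^{-3/2})$ with $c\in\mathbb R$. This is exactly the regime of interest: the direct integration argument of Theorem~\ref{thm: batching_expansion} survives at $K=2$, whereas the implicit-function route of Theorem~\ref{thm: validity} on which the algorithm is built requires $K\ge4$. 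With $\nabla f(0,0)=(1,0)$ and $\nabla^2 f/2=\mathrm{diag}(0,1)$, writing $U_i=\sqrt n\,\bar X_i$, $V_i=\sqrt n\,\bar Y_i$ (exactly i.i.d.\ $N(0,2)$), $S=U_1+U_2$, $D=U_1-U_2$ (independent $N(0,4)$), $P=V_1^2+V_2^2$, $Q=V_1^2-V_2^2$, one checks
$$W_B=\frac{S+n^{-1/2}P}{\left|D+n^{-1/2}Q\right|},\qquad \{-q\le W_B\le q\}=\{\,|S+n^{-1/2}P|\le q\,|D+n^{-1/2}Q|\,\}.$$

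I would then compute $c$ from this closed form. Conditioning on $(P,Q)$ and integrating the Gaussian laws of $(S,D)$, the $O(n^{-1/2})$ term vanishes by parity, and a second-order expansion gives, with $E[Q^2]=16$, $E[P^2]=32$, $\Psi(t)=2\Phi(q|t|)-1$, and $\int\phi''\Psi=2q\int|t|\phi(t)\phi(q|t|)\,dt$ (one integration by parts),
$$c=\tfrac{E[Q^2]}{8}\!\int\!\phi''\Psi\,dt-\tfrac{E[P^2]q}{4}\!\int\!\phi(t)|t|\phi(q|t|)\,dt=-\frac{4q}{\pi(1+q^2)}.$$
This finite number is the target the algorithm must reproduce.

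Next I would run Algorithm~\ref{alg: error estimation} and show $E[ER]$ is different. Three simplifications make $ER$ explicit. (i) Since $X$ is Gaussian, all third- and fourth-order cumulants vanish, so the cumulant-dependent pieces of $p_1,p_2$ in~\eqref{eq: Edgeworth formula}, and hence the $n^{-1}$ Edgeworth-weight term and the first-order boundary term, contribute nothing. (ii) Since $f$ is quadratic, $\nabla^3f=0$, so $b_2=0$. (iii) For $K=2$ the batch deviations are rank one, $[u,A_1-A_0]=-[u,A_2-A_0]$ and $[v,A_1,A_1]-\overline{[v]}=-([v,A_2,A_2]-\overline{[v]})$, so Cauchy--Schwarz holds with equality: $\lambda^2=4eE_2$. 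Feeding $b_2=0$ and $\lambda^2=4eE_2$ into $y_{xx}=-(F_{xx}+2F_{xy}y_x)\sqrt{E_2}$ produces, after the algebra, the exact cancellation $y_{xx}\equiv0$, killing the middle boundary term of $ER$. With $E_2=D^2/2$, $\lambda=4D\,A_{0,2}B_{1,2}$, $b_1=A_{0,2}^2+B_{1,2}^2$, the remaining ($y_x^2$) boundary term reduces to independent Gaussian integrals; evaluating it (with $A_{0,2},B_{1,2}\stackrel{\mathrm{iid}}{\sim}N(0,1)$, $E[b_1^2]=8$, and $E[\phi(q|D|/2)\,q|D|/2]=q/(\pi(1+q^2))$) gives
$$E[ER]=-\big(E[b_1^2]+4q^2E[A_{0,2}^2B_{1,2}^2]\big)\frac{q}{\pi(1+q^2)}=-\frac{4q\,(2+q^2)}{\pi(1+q^2)}=(2+q^2)\,c\neq c.$$
Thus the algorithm is biased, returning $(2+q^2)c$ instead of $c$, which is the assertion.

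The main obstacle is step (iii): verifying $\lambda^2=4eE_2$ and that, together with $b_2=0$, it forces $y_{xx}\equiv0$. This is the crux that both makes the computation tractable and pinpoints why $K=2$ fails: the genuine $n^{-1}$ effect of the non-smooth denominator $|D+n^{-1/2}Q|$ (the sign flip of $D+n^{-1/2}Q$ near $D=0$), which the exact $c$ captures through the $|t|$ appearing in the integrals above, is invisible to the smooth second-order implicit-function expansion underlying the algorithm, precisely the expansion whose validity the hypothesis $K\ge r+3$ is designed to guarantee. The rest — the closed form of $W_B$, the second-order coverage expansion, and the Gaussian integrals in $E[ER]$ — is routine once the cancellation $y_{xx}\equiv0$ is established.
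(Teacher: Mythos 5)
Your proposal is correct and its key computations check out, but it establishes the proposition by a genuinely different route from the paper. The paper's own proof is qualitative: it writes the critical values of the interval center for the event $\{-q\le W_B\le q\}$, observes that they contain the factor $\left|1+n^{-1/2}(Y_1^2-Y_2^2)/(X_1-X_2)\right|$, and argues that the smooth implicit-function expansion underlying Algorithm~\ref{alg: error estimation} amounts to dropping this absolute value (equivalently, truncating the Taylor expansion of $\sqrt{1+x}$), which is illegitimate at order $n^{-1}$ because the sign flips with probability that is not $o(n^{-1})$ and the remainder $Mn^{-3/2}/(X_1-X_2)^3$ has infinite expectation; it never evaluates the true coefficient or the algorithm's output. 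You evaluate both in closed form: the true $c=-4q/(\pi(1+q^2))$, obtained by Taylor-expanding the Gaussian densities rather than the non-smooth indicator (so the sign-flip effect is retained -- this is the sound way to do it), and $E[ER]=(2+q^2)c$, obtained from your simplifications (i)--(iii), of which the crucial one -- the $K=2$ rank-one identity $\lambda^2=4eE_2$ forcing $y_{xx}\equiv 0$ -- I verified via $y_{xx}=-F_{+}\left(-e/E_2+\lambda^2/(4E_2^2)\right)-2b_2$; your moments $E[P^2]=32$, $E[Q^2]=16$, $E[b_1^2]=8$ and the Gaussian integrals are also correct. One caution: the identity $\lambda^2=4eE_2$ holds for the correct second-order coefficient $e=\sum_i\bigl([v,A_0+B_i,A_0+B_i]-\frac{1}{K}\sum_j[v,A_0+B_j,A_0+B_j]\bigr)^2$; the batching formula in Appendix~\ref{sec: expansion other schemes} carries a spurious factor of $2$ on this squared term, and you have implicitly corrected it, which is the right reading since Step 2 of the algorithm calls for the actual Taylor expansion. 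What your route buys is a quantitative strengthening: the algorithm's bias is exactly $c-E[ER]=-(1+q^2)c=4q/\pi$, and this is precisely the $O(n^{-1})$ contribution of the sign-flip region $\{|D|<n^{-1/2}|Q|\}$ that the smooth expansion cannot see, so it confirms the paper's diagnosis with explicit numbers and shows the error is not a mere constant-factor miscalibration (the ratio $2+q^2$ depends on $q$). What the paper's route buys is brevity: it isolates the failure mechanism (the heavy-tailed $1/(X_1-X_2)$, hence the role of the $K\ge r+3$ moment condition) without any closed-form evaluation. Both arguments rest on the same underlying phenomenon.
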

For batching (only), we can use an alternative algorithm that leverages Theorem \ref{thm: batching_expansion}. This algorithm, which is detailed in Appendix \ref{sec: batching algorithm}, works regardless of the value of $K$. When $K\geq 5$, both this algorithm and Algorithm \ref{alg: error estimation} would give unbiased estimators for the coefficient of $n^{-1}$ error term, but Algorithm \ref{alg: error estimation} would have a smaller variance due to the conditioning argument in its construction.


\section{Asymptotic coverages as number of batches grows}\label{sec: asymptotic}
We close our theoretical developments with a discussion on the behaviors of batching methods when the number of batches $K$ grows. For a general $\psi(\cdot)$ and distribution $P$, we have the following theorem regarding the asymptotic as $K\rightarrow\infty$ when the number of samples in each batch is fixed:

\begin{thm}[Asymptotic coverages as number of batches grows]\label{thm: K_large}
Suppose that we are under the setting introduced in Section \ref{sec: methods}. In particular, we have i.i.d. data $X_1,\dots,X_{nK}$ drawn from $P$. Suppose that $\psi(\cdot)$ is continuously Gateaux differantiable at $P$ with influence function $IF$. Suppose that $E\psi(\hat{P}_{1})-\psi\neq 0$ (i.e., $\psi(\hat{P}_1)$ is a biased estimator of $\psi$),  $\text{Var}(\psi(\hat{P}_1))<\infty$, and $Var_P IF=\sigma^2,0<\sigma<\infty$.  Fix $n$ and let $K\rightarrow \infty$. Then for any $q>0$, $$P(-q\leq W_B\leq q)\rightarrow 0,$$ $$P(-q\leq W_S\leq q) \rightarrow \Phi\left(q\sqrt{nE(\psi(\hat{P}_1)-\psi)^2}/\sigma\right)-\Phi\left(-q\sqrt{nE(\psi(\hat{P}_1)-\psi)^2}/\sigma\right),$$ while $$P(-q\leq W_{SJ}\leq q)\rightarrow \Phi(q)-\Phi(-q).$$
\end{thm}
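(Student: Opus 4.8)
The plan is to treat each batch estimate $\psi(\hat{P}_i)$ as an i.i.d. draw (recall $n$ is fixed and the $K$ batches are independent), write $\xi_i:=\psi(\hat{P}_i)$ with mean $\mu_n:=E\psi(\hat{P}_1)$ and variance $v_n:=\mathrm{Var}(\psi(\hat{P}_1))$, and observe that all three statistics share the shape $\sqrt{K}\cdot(\text{centered numerator})/(\text{sample-standard-deviation-type denominator})$ after cancelling the common $\sqrt{n}$. The three limits then follow from the law of large numbers (LLN) applied to the denominators, a central limit theorem (CLT) combined with the consistency of $\psi(\hat{P})$ as $N=nK\to\infty$ applied to the numerators, and Slutsky's theorem.

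For batching, $W_B=\sqrt{K}(\bar\xi-\psi)/S$ with $\bar\xi=\frac1K\sum_i\xi_i$ and $S^2=\frac1{K-1}\sum_i(\xi_i-\bar\xi)^2$. By the LLN, $\bar\xi\to\mu_n$ and $S^2\to v_n$ almost surely, so $(\bar\xi-\psi)/S\to(\mu_n-\psi)/\sqrt{v_n}$, which is nonzero precisely because of the bias assumption $E\psi(\hat{P}_1)-\psi\neq0$. Hence $\abs{W_B}=\sqrt{K}\,\abs{(\bar\xi-\psi)/S}\to\infty$ almost surely, and bounded convergence gives $P(-q\le W_B\le q)\to0$ for every fixed $q$. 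For sectioning, $W_S=\sqrt{K}(\hat\psi-\psi)/\tilde S$ with $\hat\psi=\psi(\hat{P})$ and $\tilde S^2=\frac1{K-1}\sum_i(\xi_i-\hat\psi)^2$. The assumed differentiability and $\mathrm{Var}_P IF=\sigma^2$ give the von Mises linearization $\sqrt{N}(\hat\psi-\psi)=\frac1{\sqrt N}\sum_j IF(X_j)+o_p(1)\Rightarrow N(0,\sigma^2)$, so $\sqrt{K}(\hat\psi-\psi)\Rightarrow N(0,\sigma^2/n)$; expanding $\tilde S^2$ around $\psi$ and using $\hat\psi\to\psi$ together with the LLN shows $\tilde S^2\to E(\psi(\hat{P}_1)-\psi)^2$ (the cross term vanishes). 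Slutsky then gives $W_S\Rightarrow N\big(0,(\sigma^2/n)/E(\psi(\hat{P}_1)-\psi)^2\big)$, whose variance reproduces the stated Gaussian probability.

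The sectioned jackknife is the substantive case. I would expand $J_i=K\psi(\hat{P})-(K-1)\psi(\hat{P}_{(i)})$ through its influence function. Writing $\psi(\hat{P})-\psi=E_{\hat{P}}IF+R$ and $\psi(\hat{P}_{(i)})-\psi=E_{\hat{P}_{(i)}}IF+R_{(i)}$, a direct count shows the jackknife weights collapse the linear parts exactly: $J_i=\psi+\frac1n\sum_{j\in\text{batch }i}IF(X_j)+\Delta_i=\psi+\bar I_i+\Delta_i$, where $\bar I_i$ is the batch average of influence values and $\Delta_i=KR-(K-1)R_{(i)}$. Consequently $\bar J-\psi=\frac1N\sum_j IF(X_j)+\big(KR-\tfrac{K-1}{K}\sum_i R_{(i)}\big)$, so the numerator satisfies $\sqrt{nK}(\bar J-\psi)=\frac1{\sqrt{nK}}\sum_j IF(X_j)+\sqrt{nK}\big(KR-\tfrac{K-1}{K}\sum_i R_{(i)}\big)$. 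The first term converges to $N(0,\sigma^2)$ by the CLT; for the denominator, $\frac{n}{K-1}\sum_i(J_i-\bar J)^2\approx\frac{n}{K-1}\sum_i(\bar I_i-\bar{\bar I})^2\to n\,\mathrm{Var}(\bar I_1)=\sigma^2$ by the LLN, since $\mathrm{Var}(\bar I_1)=\sigma^2/n$. Slutsky then delivers $W_{SJ}\Rightarrow N(0,1)$ and hence the stated limit $\Phi(q)-\Phi(-q)$.

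The main obstacle is controlling the remainder block $\sqrt{nK}\big(KR-\tfrac{K-1}{K}\sum_i R_{(i)}\big)$ and the analogous remainder in the variance estimator, and showing both are $o_p(1)$ despite $n$ being fixed (so that no single $\psi(\hat{P}_i)$ is consistent) and despite the remainders being multiplied by the large factors $K$ and $K-1$. The key leverage is the identity $\hat{P}-\hat{P}_{(i)}=\frac1K(\hat{P}_i-\hat{P}_{(i)})$, showing $\hat{P}$ and each $\hat{P}_{(i)}$ differ by an $O(1/K)$ signed measure, combined with the bias-cancelling structure of the jackknife: carrying the von Mises expansion to second order, the $O(1/N)$ (bias-dominant) part of the remainders cancels exactly in $KR-\tfrac{K-1}{K}\sum_i R_{(i)}$, leaving only a degenerate, U-statistic-type second-order term of order $O_p(1/N)$ that vanishes after multiplication by $\sqrt{N}$. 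Making this rigorous under the stated differentiability — in particular quantifying the remainder uniformly enough across the $K\to\infty$ leave-one-batch-out perturbations — is the delicate step; the batching and sectioning arguments above are comparatively routine.
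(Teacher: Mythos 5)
Your batching and sectioning arguments are correct and essentially identical to the paper's own proof: the law of large numbers on the i.i.d.\ batch estimates combined with the bias assumption forces $|W_B|\to\infty$, and the von Mises linearization $\sqrt{nK}(\psi(\hat P)-\psi)\Rightarrow N(0,\sigma^2)$ together with the LLN for the denominator and Slutsky gives the sectioning limit.

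The genuine gap is in the sectioned-jackknife case, and you have named it yourself without closing it. Your route hinges on a \emph{second-order} von Mises expansion of $\psi$: you need the $O(1/N)$ parts of the remainders $R$, $R_{(i)}$ to cancel exactly in $KR-\frac{K-1}{K}\sum_i R_{(i)}$, leaving a degenerate U-statistic-type term. But the theorem assumes only that $\psi$ is continuously Gateaux differentiable at $P$; no second-order differentiability is available, so the expansion you invoke is not justified under the stated hypotheses. Moreover, even granting such an expansion, the step you defer (``quantifying the remainder uniformly enough across the $K\to\infty$ leave-one-batch-out perturbations'') is precisely the substantive content of the claim: the remainders $\Delta_i=KR-(K-1)R_{(i)}$ carry the diverging factors $K$ and $K-1$, enter the numerator through $\sqrt{nK}(\bar J-\psi)$ and enter the variance estimator $\frac{1}{K-1}\sum_i(J_i-\bar J)^2$ quadratically, and nothing in your sketch bounds them. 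The paper sidesteps all of this: viewing each batch of $n$ observations as a single (multidimensional) data point, $\psi(\hat P)$ is a functional of the empirical distribution of $K$ i.i.d.\ batch-level points, $J_i$ is the ordinary delete-one jackknife pseudo-value, and the limit $W_{SJ}\Rightarrow N(0,1)$ follows from the established consistency of the usual jackknife under continuous Gateaux differentiability (Theorem 2.3 of \cite{Shao1995}) --- exactly the assumption the theorem makes. If you want a self-contained proof along your lines, you would have to strengthen the hypotheses (e.g., assume a second-order expansion of $\psi$ with uniformly controlled remainder), which proves a different theorem.
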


Theorem \ref{thm: K_large} implies that when $K$ is large, batching has a significant bias which could lead to an extremely small coverage. The sectioning statistic converges in distribution to a limit that is not standard normal, so its asymptotic coverage is different from the nominal level. In contrast, the SJ statistic converges to the standard normal, so SJ is the only method among all considered ones that is consistent in this regime. 
This observation is also consistent with the numerical findings in \cite{Nakayama2014confidence}, who considers $K=10$ and $K=20$ and observes severe under-coverage for batching when $K=20$ in some examples. 



Finally, suppose we fix the total number of data and let $K$ increases. In this case, for sectioning and batching, the performance could be bad because
the number of data in each batch could be extremely small. On the other hand, SJ it reduces to a usual jackknife. So SJ is evidently superior in this case.

\section{Numerical results}\label{sec: numerics}
In this section, we run experiments to validate our expansions. Moreover, based on the coefficients of the $n^{-1}$ error term and the derived theoretical coverage probabilities, we investigate the coverage performances of our considered batching methods and in terms of the number of batches in two regimes: when the number of data in each batch is fixed, and when the total data size is fixed.

\subsection{Validation of expansions and simulation algorithm}\label{subsec: validation}

In this subsection, we validate the correctness of our expansions and Algorithm \ref{alg: error estimation} by comparing the theoretical with the actual coverage probabilities. Moreover, we investigate the effect of the number of batches on coverage error and compare different batching methods. We consider smooth function models where we want to construct a CI for $f(EX)$. In what follows, we present the numerical findings for different choices of $f$ and the distribution of $X$.

\subsubsection{Normal}\label{subsec: validation normal}

Consider the model $f(EX)$ where $X\sim N(0,1)$ and $f(x) = x + x^2$. We estimate both the coverage errors given by our theoretical expansion
and empirical experiments. Recall that $c$ is the coefficient for the $n^{-1}$ error term defined in \eqref{eq: c def}. For the theoretical coverage
probabilities, we first estimate $c$ via $10^{4}$ replications of Algorithm \ref{alg: error estimation}. Then an estimate of the theoretical coverage is given by the nominal level plus $cn^{-1}$. On the other hand, the actual
coverage probabilities are estimated via $10^{6}$ experimental repetitions, where in each repetition we generate a new data set and compute the CI, and at the end we estimate the empirical coverages by taking the proportions of times where the CI covers the truth. 

Figure \ref{fig: fix_n} shows the result
with a fixed number of samples per batch $n=30$, and the number of batches $K$ ranging from 2 to 30 (note that although Theorem \ref{prop: alg correctness} requires $K\geq 5$ in general, for this example we also have the validity of expansion for $K=2,3,4$). We see that for each batching method, the
estimated theoretical coverage probabilities are close to the
estimated actual coverage probabilities. This validates the correctness of our estimation for the error. When $K$ becomes large, the coverage probability of batching decreases quickly, while SJ has the smallest error among the four methods. For example, when the nominal level is 80\% and $K=30$, the estimated coverage probability of batching is only about 70\%, which is 10\% smaller than the nominal level, while SJ has the smallest error. On the other hand, when $K$ is small, this observation does not hold. In fact, when $K\leq 5$, batching has the smallest error while SJ has the largest error, which is exactly the opposite of the observation when $K$ is large. In addition, we observe that the critical value of $K$ where the comparison among methods changes depends on the nominal level: When the nominal level is 80\%, batching has the largest coverage error when $K=10$; however, when the nominal level is 95\%, batching still has the smallest coverage error when $K=10$. Moreover, we can see that for each method, the theoretical coverage (and thus the coefficient $c$) is not monotone in $K$. For example, when the nominal level is 95\%, none of the curves is monotonically increasing or decreasing.

\begin{figure}[htbp]
\begin{subfigure}{0.32\textwidth} \includegraphics[width=1\textwidth]{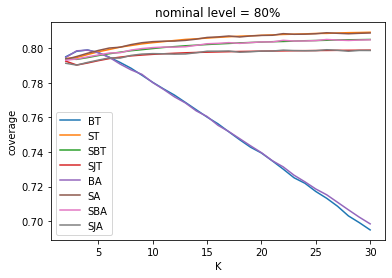}
\end{subfigure} \begin{subfigure}{0.32\textwidth} \includegraphics[width=1\textwidth]{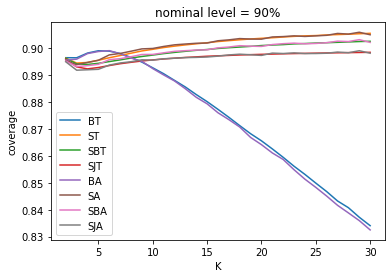}
\end{subfigure}
\begin{subfigure}{0.32\textwidth} \includegraphics[width=1\textwidth]{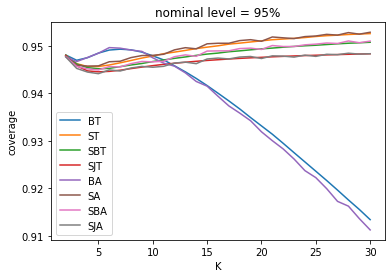}
\end{subfigure} 

\caption{Theoretical and actual coverage probabilities for the model in Section \ref{subsec: validation normal}. BT, ST, SBT, and SJT represents the estimated theoretical coverage probabilities of batching, sectioning, SB, and SJ, respectively. BA, SA, SBA, and SJA represents the estimated actual coverage probabilities of batching, sectioning, SB, and SJ, respectively. \label{fig: fix_n}}
\end{figure}

\subsubsection{Exponential and chi-square}\label{subsec: validation exp}

Consider the model $\psi({\bar{P}})=f(E_{\bar{P}}X,E_{\bar{P}}Y)$. Let $X\sim exp(1)-1$, $Y\sim\frac{\chi_{1}^{2}-1}{\sqrt{2}}$
and they are independent under the true distribution $P$. Let $f(x,y)=x+2y+y^{2}+x^{3}$. This is an example where the samples are multidimensional with non-normal distribution.  The cumulants
are given by (note that there are no cross-cumulants by independence)
\[
\kappa_{2}(X)=1,\kappa_{3}(X)=2,\kappa_{4}(X)=6,\kappa_{2}(Y)=1,\kappa_{3}(Y)=2\sqrt{2},\kappa_{4}(Y)=12
\]
We use $4\times 10^4$ replications of Algorithm \ref{alg: error estimation} to estimate the theoretical coverage probabilities, and $10^6$ experimental repetitions to estimate the actual coverage probabilities. We set $n=30$ and let $K$ range from 4 to 30. The results are shown in Figure \ref{fig: expchi}. The estimated theoretical coverage probabilities are again close to the actual coverage probabilities, although the differences appear slightly larger than the first example where the data is exactly normal. The increased differences are probably due to the need to estimate the error term induced by the non-normality of the underlying distribution. When $K$ is large, we observe that SJ has the smallest coverage error. In comparison, batching has more significant under-coverage, while sectioning and SB have significant over-coverage issues. For example, when the nominal level is 80\%, the estimated coverage probability of batching is around 77\%, the estimated coverage probabilities of sectioning and SB are above 82\%, while the estimated coverage probability of SJ is close to the nominal level. But as in the previous example, this observation may not hold when $K$ is small. For example, when the nominal level is 95\% and $K=5$, the estimated coverage probability of batching, sectioning and SB are between 94.75\% and 95\%, while the estimated coverage probability of SJ is smaller than 94.75\%, indicating that SJ has the largest error in this case.

\begin{figure}[htbp]
\begin{subfigure}{0.32\textwidth} \includegraphics[width=1\textwidth]{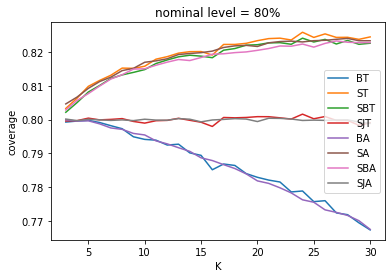}
\end{subfigure} \begin{subfigure}{0.32\textwidth} \includegraphics[width=1\textwidth]{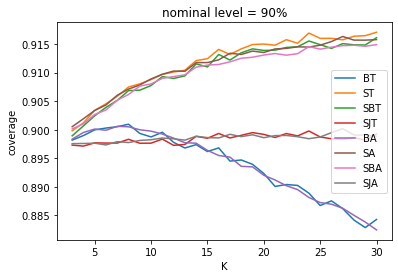}
\end{subfigure}
 \begin{subfigure}{0.32\textwidth} \includegraphics[width=1\textwidth]{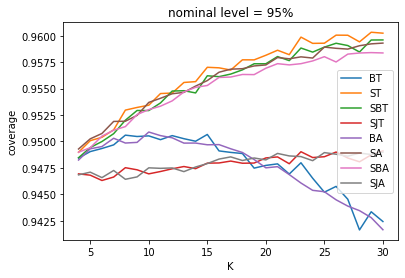}
\end{subfigure} 

\caption{Theoretical and actual coverage probabilities for the model in Section \ref{subsec: validation exp}. The definitions of BT, ST, etc are the same as in Figure \ref{fig: fix_n}.\label{fig: expchi}}
\end{figure}

\subsubsection{Normal and its square}\label{subsec: validation norm chi}

Again consider the model $\psi({\bar{P}})=f(E_{\bar{P}}X,E_{\bar{P}}Y)$. Let $X\sim N(0,1)$, $Y=\frac{X^{2}-1}{\sqrt{2}}$ under the true distribution $P$, and let $f(x,y)=sin(x+y^{2})$. This is an example where the coordinates of the samples are dependent and $f$ is not a polynomial.
The joint cumulants can be computed as follows:
\[
\kappa(X,X)=\kappa(Y,Y)=1,\kappa(X,Y)=0
\]
\[
\kappa(X,X,X)=\kappa(X,Y,Y)=0,\kappa(X,X,Y)=\sqrt{2},\kappa(Y,Y,Y)=2\sqrt{2}
\]
\[
\kappa(X,X,X,Y)=\kappa(X,Y,Y,Y)=0,\kappa(X,X,Y,Y)=4,\kappa(Y,Y,Y,Y)=12
\]
Other setups are the same as the previous example. The results are shown in Figure \ref{fig: normal_chi}. Again, the actual coverage probabilities are close to the estimated theoretical coverage probabilities. In this example, when $K$ is large, we also observe that SJ has the smallest error and batching has significant under-coverage issues, which is consistent with the findings in the previous two examples. A difference with the previous examples is that, for sectioning, SB, and SJ, the coverage probabilities do not change much when $K$ changes, which can be seen since the curves for these methods are close to horizontal.

\begin{figure}[htbp]
\begin{subfigure}{0.32\textwidth} \includegraphics[width=1\textwidth]{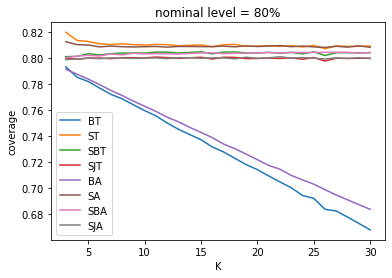}
\end{subfigure} \begin{subfigure}{0.32\textwidth} \includegraphics[width=1\textwidth]{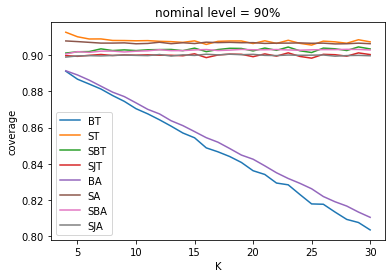}
\end{subfigure}
\begin{subfigure}{0.32\textwidth} \includegraphics[width=1\textwidth]{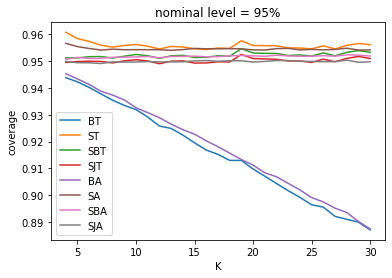}
\end{subfigure} 

\caption{Theoretical and actual coverage probabilities for the model in Section \ref{subsec: validation norm chi}. The definitions of BT, ST, etc are the same as in Figure \ref{fig: fix_n}.\label{fig: normal_chi}}
\end{figure}


\subsection{Coverage error comparisons when fixing total data size}

We investigate the coverage error when the total data size $N$ is fixed and $K$ changes. In this case, the number of data in each section is $N/K$ which will become smaller when $K$ is larger. We consider $N=1000$ and $K$ ranges from 4 to 30. Since $N/K>30$ which is large, we expect our asymptotic approximation to be accurate and estimate the coverage probability by the nominal level plus $c(N/K)^{-1}$. We consider the models in Section \ref{subsec: validation}. The results are plotted in Figure \ref{fig: fix N}. When $K$ is large, we observe a similar trend as in Section \ref{subsec: validation}: SJ has the smallest coverage error, sectioning and SB have over-coverage issues, while batching has under-coverage issues. When $K$ is smaller, the coverage probabilities of all methods are close to the nominal level. This can be attributed to that when $K$ is small, there are sufficient samples in each section which makes the coverage error small.

\subsection{Non-monotonicity of coverage errors in number of batches}

Lastly, we also investigate the trend of the coefficient when fixing total data size (i.e., $Kc$). In general, the coefficient does not exhibit monotonicity behavior as $K$ increases. This can be seen from the estimated error coefficients reported in Table \ref{tab: coef} which contains two experiments with different setups. In particular, from Experiment 1, we can see that the error coefficient of batching increases when $K$ goes from 5 to 7, and then decreases as $K$ increases from 7. The error coefficient of sectioning decreases when $K$ goes from 5 to 7, and then increases as $K$ increases from 7. The error coefficient of SB decreases when $K$ increases from 5 to 9, and then increases as $K$ increases from 9. So we conclude that the error coefficients of batching, sectioning and SB are not monotone. In Experiment 1, the error coefficient of SJ keeps decreasing as $K$ increases from 5 to $7$, but in Experiment 2, we see that the coefficient of SJ increases as $K$ increases from 5 to 7. Therefore, the error coefficient of SJ is also not monotone in a particular direction.



\begin{figure}[htbp]
\begin{subfigure}{0.32\textwidth} \includegraphics[width=1\textwidth]{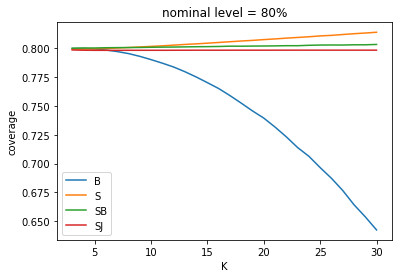}\caption{}
\end{subfigure} \begin{subfigure}{0.32\textwidth} \includegraphics[width=1\textwidth]{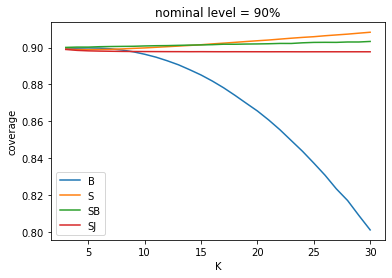}\caption{}
\end{subfigure}
\begin{subfigure}{0.32\textwidth} \includegraphics[width=1\textwidth]{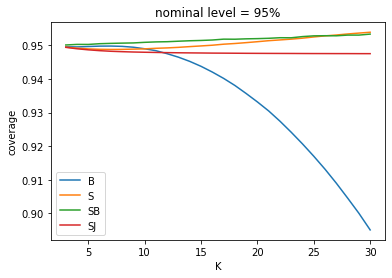}\caption{}
\end{subfigure}
\begin{subfigure}{0.32\textwidth} \includegraphics[width=1\textwidth]{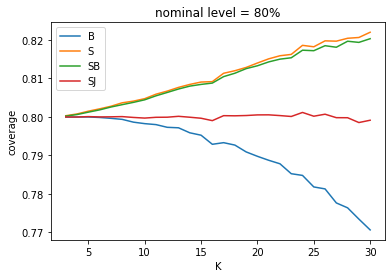}\caption{}
\end{subfigure} \begin{subfigure}{0.32\textwidth} \includegraphics[width=1\textwidth]{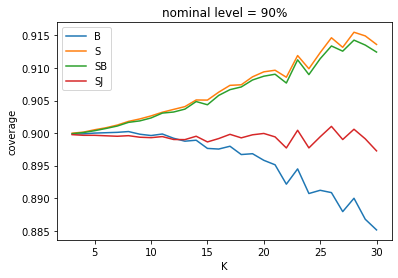}\caption{}
\end{subfigure}
\begin{subfigure}{0.32\textwidth} \includegraphics[width=1\textwidth]{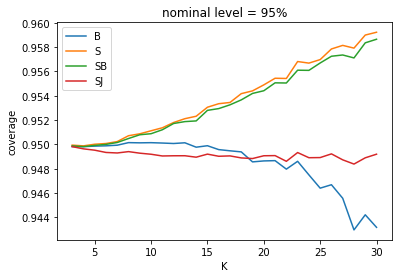}\caption{}
\end{subfigure}
\begin{subfigure}{0.32\textwidth} \includegraphics[width=1\textwidth]{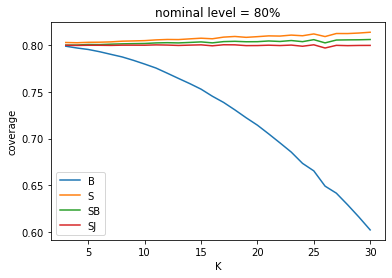}\caption{}
\end{subfigure} \begin{subfigure}{0.32\textwidth} \includegraphics[width=1\textwidth]{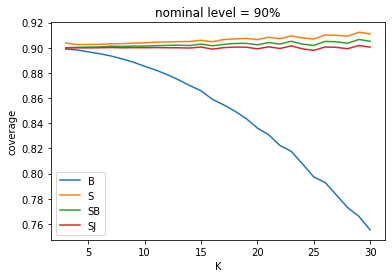}\caption{}
\end{subfigure}
\begin{subfigure}{0.32\textwidth} \includegraphics[width=1\textwidth]{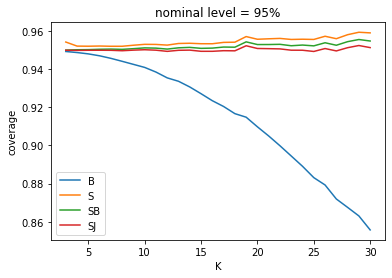}\caption{}
\end{subfigure}

\caption{Theoretical coverage probabilities when $N$ is fixed. (a)-(c), (d)-(f), and (g)-(i) correspond to the models in Sections \ref{subsec: validation normal}, \ref{subsec: validation exp}, and \ref{subsec: validation norm chi}, respectively. The definitions of BT, ST, etc are the same as in Figure \ref{fig: fix_n}.\label{fig: fix N}}
\end{figure}

\begin{table}[]
\caption{Estimated error coefficients. Here $c_{\text{B}}$ represents the coefficient of the $n^{-1}$ coverage error term for batching as defined in \eqref{eq: c def} where we added subscript ``B'' to highlight its dependence on the method used. $c_{\text{S}},c_{\text{SJ}},c_{\text{SB}}$ are defined similarly. ``CI'' represents the half width of a 95\% CI for the target quantity.}
\centering
\textbf{Experiment 1}: $f(x) = x + x^2,X\sim N(0,1)$, nominal level = 95\% \\
\begin{tabular}{|l|l|l|l|l|l|l|l|l|}
\hline
K  & $Kc_{\text{B}}$        & CI-B     & $Kc_{\text{S}}$        & CI-S     & $Kc_{\text{SJ}}$       & CI-SJ    & $Kc_{\text{SB}}$       & CI-SB    \\ \hline
5  & -0.383  & 0.008 & -1.088 & 0.014 & -1.368 & 0.016 & -1.207 & 0.015 \\ \hline
6  & -0.266  & 0.011 & -1.209 & 0.016 & -1.613 & 0.014 & -1.423 & 0.017 \\ \hline
7  & -0.246  & 0.015 & -1.251 & 0.019 & -1.795 & 0.013 & -1.532 & 0.019 \\ \hline
8  & -0.335  & 0.019 & -1.233 & 0.022 & -1.917 & 0.012 & -1.611 & 0.022 \\ \hline
9  & -0.592  & 0.023 & -1.157 & 0.025 & -2.014 & 0.012 & -1.673 & 0.026 \\ \hline
10 & -1.051  & 0.028 & -1.076 & 0.029 & -2.085 & 0.011 & -1.651 & 0.028 \\ \hline
11 & -1.628  & 0.030 & -0.911 & 0.033 & -2.143 & 0.011 & -1.639 & 0.032 \\ \hline
12 & -2.523  & 0.037 & -0.792 & 0.036 & -2.197 & 0.010 & -1.580 & 0.035 \\ \hline
13 & -3.559  & 0.041 & -0.626 & 0.039 & -2.244 & 0.010 & -1.520 & 0.038 \\ \hline
14 & -4.805  & 0.045 & -0.409 & 0.041 & -2.277 & 0.009 & -1.407 & 0.039 \\ \hline
15 & -6.273  & 0.050 & -0.207 & 0.044 & -2.305 & 0.009 & -1.299 & 0.040 \\ \hline
16 & -7.974  & 0.059 & 0.007  & 0.045 & -2.339 & 0.009 & -1.221 & 0.045 \\ \hline
17 & -9.854  & 0.062 & 0.308  & 0.045 & -2.355 & 0.009 & -1.055 & 0.046 \\ \hline
18 & -11.956 & 0.068 & 0.514  & 0.049 & -2.376 & 0.008 & -0.951 & 0.046 \\ \hline
19 & -14.347 & 0.074 & 0.774  & 0.051 & -2.398 & 0.008 & -0.798 & 0.047 \\ \hline
20 & -16.858 & 0.082 & 1.052  & 0.051 & -2.415 & 0.008 & -0.634 & 0.049 \\ \hline
21 & -19.538 & 0.090 & 1.352  & 0.052 & -2.427 & 0.008 & -0.495 & 0.055 \\ \hline
22 & -22.580 & 0.098 & 1.580  & 0.053 & -2.438 & 0.008 & -0.317 & 0.052 \\ \hline
23 & -25.873 & 0.105 & 1.812  & 0.058 & -2.447 & 0.008 & -0.143 & 0.053 \\ \hline
24 & -29.338 & 0.115 & 2.099  & 0.059 & -2.461 & 0.007 & 0.017  & 0.056 \\ \hline
25 & -33.002 & 0.123 & 2.463  & 0.057 & -2.473 & 0.007 & 0.159  & 0.055 \\ \hline
26 & -36.867 & 0.133 & 2.781  & 0.059 & -2.477 & 0.007 & 0.358  & 0.060 \\ \hline
27 & -41.014 & 0.144 & 3.040  & 0.059 & -2.485 & 0.007 & 0.574  & 0.057 \\ \hline
28 & -45.427 & 0.151 & 3.350  & 0.059 & -2.488 & 0.007 & 0.770  & 0.058 \\ \hline
29 & -49.949 & 0.166 & 3.611  & 0.063 & -2.500 & 0.007 & 0.926  & 0.059 \\ \hline
30 & -54.900 & 0.174 & 3.862  & 0.062 & -2.504 & 0.007 & 1.132  & 0.058 \\ \hline
\end{tabular}

\textbf{Experiment 2}: $f(x) = x_1+x_2+x_3-0.4x_1^2-0.06x_1x_2-2.13x_2^2+1.6x_3^2-1.79x_1^3-0.84x_1^2x_2+0.5x_1x_2^2-1.25x_2^3, X\sim N(0,I_3)$, nominal level = 80\%. 

\begin{tabular}{|c|c|c|c|}
\hline
$K$ & 5     & 6      & 7     \\
\hline
$Kc_{\text{SJ}}$  & -0.87 & -0.786 & -0.74 \\
\hline
CI-SJ & 0.015 & 0.013  & 0.012 \\
\hline
\end{tabular}
\label{tab: coef}
\end{table}

\section*{Acknowledgments}
We gratefully acknowledge support from the National Science Foundation under grants CAREER CMMI-1834710 and IIS-1849280.


\bibliographystyle{plainnat}
\bibliography{arXiv}

\appendix

\section{Explicit Coverage Error Expansions for Simple Examples}\label{sec: explicit examples}
Suppose that $K=2$, $\psi(P)=f(E_{P}X):=E_{P}X+\lambda\left(E_{P}X\right)^{2}$
and $P_{0}$ is standard normal. In this case, the batched estimates
are $\psi(\hat{P}_{i})=f(E_{\hat{P}_{i}}X)\stackrel{d}{=}f(\frac{1}{\sqrt{n}}U_{i})$
where $U_{i}\sim N(0,1)$. For this model, the higher-order coverage errors can be computed explicitly via the following lemma.
\begin{lem}\label{lem: K2_computation}
With the model introduced above, the higher-order coverage errors for batching, sectioning, SB and SJ can be expressed as

\begin{equation}
P(-q\leq W_{B}\leq q)-P(-q\leq t_{1}\leq q)=\frac{\lambda^{2}}{n}\left(-q\left(q^{2}-1\right)^{2}\left(\frac{1}{q^{2}+1}\right)^{3}\frac{4}{\pi}\right)+O(n^{-3/2}),\label{eq: K2_b_err}
\end{equation}

\begin{equation}
P(-q\leq W_{S}\leq q)-P(-q\leq t_{1}\leq q) = \frac{\lambda^{2}}{n}\left(-q^{5}\left(\frac{1}{q^{2}+1}\right)^{3}\frac{4}{\pi}+q\left(\frac{1}{q^{2}+1}\right)^{2}\frac{1}{\pi}\right)+O(n^{-3/2}),\label{eq: K2_s_err}
\end{equation}

\begin{equation}\label{eq: K2_sb_err}
P(-q\leq W_{SB}\leq q)-P(-q\leq t_{1}\leq q)=\frac{\lambda^{2}}{n}\left(-q^5\left(\frac{1}{q^{2}+1}\right)^{3}\frac{4}{\pi}\right)+O(n^{-3/2}),
\end{equation}

\begin{equation}\label{eq: K2 sj err}
    P(-q\leq W_{SJ}\leq q)-P(-q\leq t_{1}\leq q)=\frac{\lambda^{2}}{n}\left(-q(q^{2}+1)^{-1}\frac{4}{\pi}\right)+O(n^{-3/2}).
\end{equation}

\end{lem}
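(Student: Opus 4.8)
The plan is to exploit the fully explicit Gaussian structure to reduce each statistic to a closed form and then expand the coverage probability directly, so that no general Edgeworth machinery is needed. First I would record that with $K=2$ and $\psi=0$ each batch mean satisfies $E_{\hat P_i}X=\bar X_i\stackrel{d}{=}U_i/\sqrt n$ with $U_1,U_2$ i.i.d.\ standard normal, so $\psi(\hat P_i)=\bar X_i+\lambda\bar X_i^2$. Writing $\epsilon=\lambda/\sqrt n$, $S=U_1+U_2$ and $D=U_1-U_2$ (independent $N(0,2)$ variables), I would derive the exact identities, for instance $W_B=(Z_1+Z_2)/\abs{Z_1-Z_2}$ with $Z_i=U_i+\epsilon U_i^2$, and the analogous closed forms for $W_S,W_{SB},W_{SJ}$. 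Expanding in $\epsilon$, every statistic takes the shape $\frac{1}{\abs D}\bigl(S+\epsilon P_1(S,D)+\epsilon^2 P_2(S,D)\bigr)+O(\epsilon^3)$ for explicit polynomials $P_1,P_2$ that differ across methods (e.g.\ $P_1=\tfrac12(D^2-S^2)$ for batching, $P_1=-\tfrac12 S^2$ for sectioning and SB, $P_1=-\tfrac12(S^2+D^2)$ for SJ); the $\epsilon=0$ term $S/\abs D$ is exactly $t_1$-distributed, recovering $P(-q\le t_1\le q)$.

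Next I would compute $P(-q\le W_\cdot\le q)$ by conditioning on $\abs D=t$. Given $t$, since $\abs D>0$ the event $W_\cdot\le q$ becomes $h_t(S)\le qt$, where $h_t(S)=\abs D\,W_\cdot$ is the cubic-in-$S$ map read off from the numerator, and the two-sided event reduces to $\{S^{*}_{-}(t)\le S\le S^{*}_{+}(t)\}$. Inverting $h_t(S)=\pm qt$ by Taylor expansion (equivalently the implicit function theorem, as in the proof of Theorem~\ref{thm: validity}) gives $S^{*}_{\pm}=\pm qt+a^{\pm}_1\epsilon+a^{\pm}_2\epsilon^2+O(\epsilon^3)$ with coefficients that are explicit polynomials in $t$ and $q$. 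Writing the conditional probability as $\Phi_{\sqrt2}(S^{*}_{+})-\Phi_{\sqrt2}(S^{*}_{-})$ and expanding, the $\epsilon^1$ term cancels because $\phi_{\sqrt2}$ is even (the oddness/evenness mechanism already used in Theorem~\ref{thm: batching_expansion}), leaving the $\epsilon^2$ coefficient
\[
\phi_{\sqrt2}(qt)\bigl(a^{+}_2-a^{-}_2\bigr)+\tfrac12\phi'_{\sqrt2}(qt)\bigl((a^{+}_1)^2+(a^{-}_1)^2\bigr).
\]
For batching, SB and SJ one finds $a^{+}_2=a^{-}_2=0$, so only the $\phi'_{\sqrt2}$ term survives; for sectioning the extra $D^2$-dependence in $P_2$ makes $a^{+}_2-a^{-}_2\neq0$, producing the additional $q(q^2+1)^{-2}/\pi$ contribution visible in \eqref{eq: K2_s_err}.

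Finally I would integrate the $\epsilon^2$ coefficient against the density $\frac{1}{\sqrt\pi}e^{-t^2/4}$ of $\abs D$. Using $\phi'_{\sqrt2}(x)=-\tfrac{x}{2}\phi_{\sqrt2}(x)$ and the elementary moments
\[
\int_0^\infty t^{2k+1}e^{-(q^2+1)t^2/4}\,dt=\tfrac{k!}{2}\Bigl(\tfrac{4}{q^2+1}\Bigr)^{k+1},
\]
each integral closes in elementary form; substituting $\epsilon^2=\lambda^2/n$ then yields the four stated expressions. This bookkeeping is routine but method-dependent, so I would carry it out separately for $W_B,W_S,W_{SB},W_{SJ}$.

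The main obstacle is justifying the $O(n^{-3/2})$ remainder, not the formal expansion. Since $K=2$ lies outside the $K\ge4$ regime of Theorem~\ref{thm: validity}---and Proposition~\ref{prop: errorK2} shows $K=2$ genuinely fails in a related higher-dimensional example---the remainder cannot be imported from the general theory and must be controlled by hand. The two delicate regions are where the denominator changes sign (equivalently $1+\epsilon S<0$) and where $\abs D$ is small so that $S/\abs D$ blows up; both lie on events of probability exponentially small in $n$ by Gaussian tail bounds, hence contribute beyond every polynomial order and may be discarded. Showing that the Taylor remainder of the threshold inversion integrates against the Gaussian weight to something genuinely $O(\epsilon^3)$, uniformly in $t$, is the crux, and here the exact normality of the example replaces the Edgeworth estimates underlying Theorems~\ref{thm: batching_expansion} and~\ref{thm: validity}.
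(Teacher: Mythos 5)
Your proposal is correct and takes essentially the same route as the paper's own proof: both exploit exact normality to write each statistic in closed form in the sum/difference variables (your $(S,D)$ are the paper's $(A_0,A_1)$ up to a $\sqrt{2}$ rescaling), solve for the critical values of the numerator variable by Taylor/implicit-function inversion, cancel the $O(\epsilon)$ term via evenness of the Gaussian density combined with $a_1^{+}=a_1^{-}$ (which indeed holds for all four methods, with $a_2^{+}\neq a_2^{-}$ only for sectioning), and integrate the $O(\epsilon^2)$ coefficient against the half-normal density of $\left|D\right|$ using elementary Gaussian moments, which reproduces the stated constants. The differences are purely notational and organizational, including your slightly more systematic bookkeeping of the remainder and of the exponentially small exceptional events, so no further comparison is needed.
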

\begin{proof}

The test statistic for batching can be
expressed as 
\begin{equation*}
W_{B}  =\frac{\sqrt{nK}\left(\frac{f\left(\frac{1}{\sqrt{n}}U_{1}\right)+f\left(\frac{1}{\sqrt{n}}U_{2}\right)}{2}\right)}{\sqrt{\frac{n}{2}\left(f\left(\frac{1}{\sqrt{n}}U_{1}\right)-f\left(\frac{1}{\sqrt{n}}U_{2}\right)\right)^{2}}}
  =\frac{\sqrt{2}\left(\frac{U_{1}+U_{2}}{2}+\frac{\lambda}{\sqrt{n}}\frac{U_{1}^{2}+U_{2}^{2}}{2}\right)}{\frac{\sqrt{2}}{2}\left|U_{1}-U_{2}\right|\sqrt{\left(1+\frac{\lambda}{\sqrt{n}}\left(U_{1}+U_{2}\right)\right)^{2}}}
\end{equation*}
Denote $A_{0}=\sqrt{2}\frac{U_{1}+U_{2}}{2}$ and $A_{1}=\sqrt{2}\frac{U_{1}-U_{2}}{2}$
(note that they are independent). Then from the above, 
\[
W_{B}=\frac{A_{0}+\frac{\lambda}{\sqrt{2}\sqrt{n}}\left(A_{0}^{2}+A_{1}^{2}\right)}{\left|A_{1}\right|\sqrt{\left(1+\frac{\sqrt{2}\lambda}{\sqrt{n}}A_{0}\right)^{2}}}=\frac{A_{0}+\frac{\lambda}{\sqrt{2}\sqrt{n}}\left(A_{0}^{2}+A_{1}^{2}\right)}{\left|A_{1}\right|\left(1+\frac{\sqrt{2}\lambda}{\sqrt{n}}A_{0}\right)}
\]
Here the second equality holds as long as $1+\frac{\sqrt{2}\lambda}{\sqrt{n}}A_{0}>0$
which happens with probability $1-O(e^{-n})$. Based on this expression for $W_S$, we study the event $W_B\leq q$:
\begin{align*}
W_{B} & \leq q\Leftrightarrow A_{0}+\frac{\lambda}{\sqrt{2}\sqrt{n}}\left(A_{0}^{2}+A_{1}^{2}\right)\leq q\left|A_{1}\right|\left(1+\frac{\sqrt{2}\lambda}{\sqrt{n}}A_{0}\right)
\end{align*}
which is a quadratic function in $A_{0}$.  It can be equivalently
written as
\begin{equation}\label{eq: A_1_formula}
A_{0}+\frac{\lambda}{\sqrt{2n}}\left(A_{0}^{2}-2q\left|A_{1}\right|A_{0}\right)\leq q\left|A_{1}\right|-\frac{\lambda}{\sqrt{2}\sqrt{n}}A_{1}^{2}
\end{equation}
We want to write the above as $A_0<V+O(n^{-3/2})$ for some critical value $V$ that is independent of $A_0$. From the above inequality, $V$ satisfy 
\[
V=q\left|A_{1}\right|+O_p(n^{-1/2})=:q\left|A_{1}\right|+V_{1}
\]
for some $V_1=O_p(n^{-1/2})$. Plugging this in \eqref{eq: A_1_formula} and solving for $V_1$, we get
\[
V_{1}=\left(q^{2}-1\right)\frac{\lambda}{\sqrt{2}\sqrt{n}}A_{1}^{2}+O(n^{-1})=:\left(q^{2}-1\right)\frac{\lambda}{\sqrt{2}\sqrt{n}}A_{1}^{2}+V_{2}.
\]
Again, by plugging this in \eqref{eq: A_1_formula} and solving for $V_2$, we have that $V_2$ satisfy
\[
V_{2}=0+O_p(n^{-3/2}).
\]

So as a conclusion, with exponentially small error, $W_{B}\leq t\Leftrightarrow A_{0}\leq q\left|A_{q}\right|+\left(q^{2}-1\right)\frac{\lambda}{\sqrt{2}\sqrt{n}}A_{1}^{2}+O_p(n^{-3/2}).$
Similarly, $W_{B}\geq-q\Leftrightarrow A_{0}\geq-q\left|A_{1}\right|+\left(q^{2}-1\right)\frac{\lambda}{\sqrt{2}\sqrt{n}}A_{1}^{2}+O_p(n^{-3/2})$. Based on this, for the coverage error we have
\begin{align*}
 & P(-q\leq W_{B}\leq q)-P(-q\leq t_{1}\leq q)\nonumber \\
= & P\left(-q\left|A_{1}\right|+\left(q^{2}-1\right)\frac{\lambda}{\sqrt{2}\sqrt{n}}A_{1}^{2}\leq A_{0}\leq q\left|A_{1}\right|+\left(q^{2}-1\right)\frac{\lambda}{\sqrt{2}\sqrt{n}}A_{1}^{2}\right)\\ 
& -P\left(-q\left|A_{1}\right|\leq A_{0}\leq q\left|A_{1}\right|\right)+O(n^{-3/2})\\
= & E_{A_{1}}\left[\begin{array}{c}
     \Phi\left(q\left|A_{1}\right|+\left(q^{2}-1\right)\frac{\lambda}{\sqrt{2}\sqrt{n}}A_{1}^{2}\right)-\Phi\left(-q\left|A_{1}\right|+\left(q^{2}-1\right)\frac{\lambda}{\sqrt{2}\sqrt{n}}A_{1}^{2}\right)  \\
     -\Phi\left(q\left|A_{1}\right|\right)+\Phi\left(-q\left|A_{1}\right|\right) 
\end{array}\right]+O(n^{-3/2})\\
= & E_{A_{1}}\left[-\phi\left(q\left|A_{1}\right|\right)\frac{q\lambda^{2}\left(q^{2}-1\right)^{2}A_{1}^{5}}{2n}\right]+O(n^{-3/2})\\
= & \frac{1}{\sqrt{2\pi}}\frac{\lambda^{2}}{n}\left(-\frac{q\left(q^{2}-1\right)^{2}}{2}\left(\frac{1}{q^{2}+1}\right)^{3}\mu_{5}\right)+O(n^{-3/2}) \\
= & \frac{\lambda^{2}}{n}\left(-q\left(q^{2}-1\right)^{2}\left(\frac{1}{q^{2}+1}\right)^{3}\frac{4}{\pi}\right)+O(n^{-3/2}).
\end{align*}
Here in the second equality, we condition on $A_2$ first and use that $A_0$ and $A_1$ are independent standard normals. Also $\mu_i$ is the $i$-th absolute moment of the standard normal. So we have shown \eqref{eq: K2_b_err}.

For sectioning, we can do a similar computation 
\[
W_{S}=\frac{A_{0}+\frac{\lambda}{\sqrt{2}\sqrt{n}}A_{0}^{2}}{\left|A_{1}\right|\sqrt{\left(1+\frac{\sqrt{2}\lambda}{\sqrt{n}}A_{0}\right)^{2}+\frac{\lambda^{2}A_{1}^{2}}{2n}}}=\frac{A_{0}+\frac{\lambda}{\sqrt{2}\sqrt{n}}A_{0}^{2}}{\left|A_{1}\right|\left(1+\frac{\sqrt{2}\lambda}{\sqrt{n}}A_{0}\right)\left(1+\frac{\lambda^{2}A_{1}^{2}}{4n}\right)}+O_{p}(n^{-3/2}).
\]
and 
\[
W_{S}\leq q\Leftrightarrow A_{0}+\frac{\lambda}{\sqrt{2n}}\left(A_{0}^{2}-2q\left|A_{1}\right|A_{0}\right)\leq q\left|A_{1}\right|+q\left|A_{1}\right|\frac{\lambda^{2}A_{1}^{2}}{4n}.
\]
After some algebra, we get 
\[
W_{S}\leq q\Leftrightarrow A_{0}\leq q\left|A_{1}\right|+\frac{\lambda}{\sqrt{2n}}q^{2}A_{1}^{2}+q\left|A_{1}\right|\frac{\lambda^{2}A_{1}^{2}}{4n}+O_p(n^{-3/2})
\]
and 
\[
W_{S}\geq-q\Leftrightarrow A_{0}\geq-q\left|A_{1}\right|+\frac{\lambda}{\sqrt{2n}}q^{2}A_{1}^{2}-q\left|A_{1}\right|\frac{\lambda^{2}A_{1}^{2}}{4n}+O_p(n^{-3/2}).
\]
Then we have that 
\begin{align*}
 & P(-q\leq W_{S}\leq q)\\
= & P\left(-q\left|A_{1}\right|+q^{2}\frac{\lambda}{\sqrt{2}\sqrt{n}}A_{1}^{2}-q\left|A_{1}\right|\frac{\lambda^{2}A_{1}^{2}}{4n}\leq A_{0}\leq q\left|A_{1}\right|+q^{2}\frac{\lambda}{\sqrt{2}\sqrt{n}}A_{1}^{2}+q\left|A_{1}\right|\frac{\lambda^{2}A_{1}^{2}}{4n}\right)+O(n^{-3/2})\\
= & E_{A_{1}}\left[\Phi\left(q\left|A_{1}\right|+q^{2}\frac{\lambda}{\sqrt{2}\sqrt{n}}A_{1}^{2}+q\left|A_{1}\right|\frac{\lambda^{2}A_{1}^{2}}{4n}\right)-\Phi\left(-q\left|A_{1}\right|+q^{2}\frac{\lambda}{\sqrt{2}\sqrt{n}}A_{1}^{2}-q\left|A_{1}\right|\frac{\lambda^{2}A_{1}^{2}}{4n}\right)\right]+O(n^{-3/2})\\
= & E_{A_{1}}\left[\Phi\left(q\left|A_{1}\right|\right)-\Phi\left(-q\left|A_{1}\right|\right)+\frac{\lambda^{2}}{n}\phi\left(q\left|A_{1}\right|\right)\left(-\frac{q^{5}\left|A_{1}\right|^{5}}{2}+\frac{1}{2}q\left|A_{1}\right|^{3}\right)\right]+O(n^{-3/2})
\end{align*}
Here the second equality follows by conditioning on $A_{1}$. Thus, the coverage
error of sectioning is given by

\begin{align*}
P(-q\leq W_{S}\leq q)-P(-q\leq t_{1}\leq q) & =E_{A_{1}}\left[\frac{\lambda^{2}}{n}\phi\left(q\left|A_{1}\right|\right)\left(-\frac{q^{5}\left|A_{1}\right|^{5}}{2}+\frac{1}{2}q\left|A_{1}\right|^{3}\right)\right]+O(n^{-3/2})\\
 & =\frac{\lambda^{2}}{n}\left(-q^{5}\left(\frac{1}{q^{2}+1}\right)^{3}\frac{4}{\pi}+q\left(\frac{1}{q^{2}+1}\right)^{2}\frac{1}{\pi}\right)+O(n^{-3/2})
\end{align*}
so \eqref{eq: K2_s_err} is proved. The algebra for SB is quite similar to sectioning. Starting from the following expression for the sectioning statistic:  \[
W_{SB}=\frac{A_{0}+\frac{\lambda}{\sqrt{2}\sqrt{n}}A_{0}^{2}}{\left|A_{1}\right|\sqrt{\left(1+\frac{\sqrt{2}\lambda}{\sqrt{n}}A_{0}\right)^{2}}},
\]
We can do similar computations as above and get \eqref{eq: K2_sb_err}. For SJ, we have
$$W_{SJ}\stackrel{d}{=}\frac{A_{0}+\frac{\lambda}{\sqrt{2n}}\left(A_{0}^{2}-A_{1}^{2}\right)}{\left|A_{1}\right|\sqrt{\left(1+\frac{\sqrt{2}\lambda}{\sqrt{n}}A_{0}\right)^{2}}}$$
It follows that $$P(-q\leq W_{SJ}\leq q)=P(-q\left|A_{1}\right|+(q^{2}+1)\frac{\lambda}{\sqrt{2n}}A_{1}^{2}\leq A_{0}\leq q\left|A_{1}\right|+(q^{2}+1)\frac{\lambda}{\sqrt{2n}}A_{1}^{2})$$ which leads to \eqref{eq: K2 sj err}. 
\end{proof}

Lemma \ref{lem: K2_computation} indicates that these three methods have different higher-order coverage errors. More specifically, their leading term ($n^{-1}$ order term) in the error expansion is different.  When $q\geq1$ (which is usually the case; since 1 is the 75-percentile of the $t_{1}$ distribution), the RHS of each of \eqref{eq: K2_b_err}-\eqref{eq: K2 sj err} is negative, which implies that the actual coverage probability is smaller than the nominal coverage probability. With a litte algebra, we can show that RHS of \eqref{eq: K2 sj err} $<$ RHS of \eqref{eq: K2_sb_err} $<$ RHS of \eqref{eq: K2_s_err} $<$ RHS of \eqref{eq: K2_b_err} $<0$. Thus, batching has the smallest higher-order coverage error and SJ has the largest higher-order coverage error. 

However, if the underlying distribution is not normal, then we also need to
consider the error induced by that. The joint density of $\left(\sqrt{n}\bar{X}_{1},\sqrt{n}\bar{X}_{2}\right)$
admits an Edgeworth expansion where the coefficients are determined
by the cumulants of $X$. For simplicity, consider the case when $EX^{3}=EX=0$
and $VarX=1$. Let $\kappa_{4}=EX^{4}-3$ be the 4-th cumulant. Then the density of $\sqrt{n}X_i$ has Edgeworth expansion $p_{X_i}(x) = \phi(x)(1+\frac{1}{24n}\kappa_4 He_4(x)) + O(n^{-3/2})$. Here $He_4$ is the 4-th Hermite polynomial given by $He_4(x)=x^4-6x^2+3$.  Noting that all of $W_B,W_S,W_{SB}$ and $W_{SJ}$ can be expressed as $\frac{\bar{X}_1+\bar{X}_2}{\left\vert\bar{X}_1-\bar{X}_2 \right\vert}+O_p(n^{-1/2})$, we have that the contribution of the error term in the Edgeworth expansion to the coverage error is given by (for both sectioning and
batching):
\begin{align}\label{eq: err_distribution}
& P\left(-q\leq\frac{\bar{X}_1+\bar{X}_2}{\left\vert\bar{X}_1-\bar{X}_2 \right\vert}\leq q  \right) - P\left(-q\leq\frac{U_1+U_2}{\left\vert U_1-U_2 \right\vert}\leq q  \right)\\ \nonumber = & 2n^{-1}\int_{-q\leq f(\mathbf{z})\leq q}\phi(z_{1})\phi(z_{2})\frac{1}{24}\kappa_{4}He_{4}(z_{1})d\mathbf{z}+O(n^{-3/2})
\end{align}
where $f(z_{1},z_{2})=\frac{z_{1}+z_{2}}{\left|z_{1}-z_{2}\right|}$.
Therefore, the coverage errors become the RHS of \eqref{eq: K2_b_err}-\eqref{eq: K2 sj err} plus the above term. Noting
that $\kappa_{4}$ can be positive or negative, we have that after
adding the above term, it could be the case that $0<$ RHS of \eqref{eq: K2 sj err} + \eqref{eq: err_distribution} $<$ \eqref{eq: K2_sb_err} + \eqref{eq: err_distribution} $<$ RHS of \eqref{eq: K2_s_err} + \eqref{eq: err_distribution} $<$ RHS of \eqref{eq: K2_b_err} + \eqref{eq: err_distribution}. If this is the case, then the coverage error of SJ is the smallest.

\section{Expansions for Other Schemes}\label{sec: expansion other schemes}
In this section, we do Steps 2 and 3 of Algorithm \ref{alg: error estimation} for SJ, batching, and SB. We will continue to use the notations introduced in Section \ref{sec: alg}.
\subsection{Sectioned jackknife}

We know that 
\[
W_{SJ}=\frac{\sqrt{nK}\left(\bar{J}-\psi_{0}\right)}{\sqrt{\frac{1}{K-1}\sum_{i=1}^{K}(\sqrt{n}J_{i}-\sqrt{n}\bar{J})^{2}}}
\]
where 
\begin{align*}
J_{i} & =Kf(\bar{\bar{X}})-(K-1)f\left(\frac{K\bar{\bar{X}}-\bar{X}_{i}}{K-1}\right)\\
 & =Kf(m+n^{-1/2}A_{0})-(K-1)f\left(\frac{K\left(m+n^{-1/2}A_{0}\right)-\left(m+n^{-1/2}\left(A_{0}+B_{i}\right)\right)}{K-1}\right)\\
 & =Kf(m+n^{-1/2}A_{0})-(K-1)f\left(m+n^{-1/2}\left(A_{0}-\frac{B_{i}}{K-1}\right)\right)\\
 & =f_{0}+K\left(n^{-1/2}[u,A_{0}]+n^{-1}[v,A_{0},A_{0}]+n^{-3/2}[w,A_{0},A_{0},A_{0}]\right)\\
 & -(K-1)\left(\begin{array}{c}
n^{-1/2}[u,A_{0}-\frac{B_{i}}{K-1}]+n^{-1}[v,A_{0}-\frac{B_{i}}{K-1},A_{0}-\frac{B_{i}}{K-1}]\\
+n^{-3/2}[w,A_{0}-\frac{B_{i}}{K-1},A_{0}-\frac{B_{i}}{K-1},A_{0}-\frac{B_{i}}{K-1}]
\end{array}\right)\\
 & =f_{0}+n^{-1/2}\left[u,A_{0}+B_{i}\right]+n^{-1}\left(K[v,A_{0},A_{0}]-(K-1)[v,A_{0}-\frac{B_{i}}{K-1},A_{0}-\frac{B_{i}}{K-1}]\right)\\
 & +n^{-3/2}\left(K[w,A_{0},A_{0},A_{0}]-(K-1)[w,A_{0}-\frac{B_{i}}{K-1},A_{0}-\frac{B_{i}}{K-1},A_{0}-\frac{B_{i}}{K-1}]\right)
\end{align*}
and 
\begin{align*}
\bar{J} & =\frac{1}{K}\sum_{i}J_{i}\\
 & =f_{0}+n^{-1/2}\left[u,A_{0}\right]+n^{-1}\left(K[v,A_{0},A_{0}]-\frac{K-1}{K}\sum_{i}[v,A_{0}-\frac{B_{i}}{K-1},A_{0}-\frac{B_{i}}{K-1}]\right)\\
 & +n^{-3/2}\left(K[w,A_{0},A_{0},A_{0}]-\frac{K-1}{K}\sum_{i}[w,A_{0}-\frac{B_{i}}{K-1},A_{0}-\frac{B_{i}}{K-1},A_{0}-\frac{B_{i}}{K-1}]\right)
\end{align*}
Therefore, 
\begin{align*}
 & \sum_{i=1}^{K}(\sqrt{n}J_{i}-\sqrt{n}\bar{J})^{2}\\
= & \sum_{i=1}^{K}\left(\begin{array}{c}
[u,B_{i}]-n^{-1/2}\left[(K-1)[v,A_{0}-\frac{B_{i}}{K-1},A_{0}-\frac{B_{i}}{K-1}]-\frac{K-1}{K}\sum_{i}[v,A_{0}-\frac{B_{i}}{K-1},A_{0}-\frac{B_{i}}{K-1}]\right]\\
-n^{-1}\left[\begin{array}{c}
     (K-1)[w,A_{0}-\frac{B_{i}}{K-1},A_{0}-\frac{B_{i}}{K-1},A_{0}-\frac{B_{i}}{K-1}]  \\
     -\frac{K-1}{K}\sum_{i}[w,A_{0}-\frac{B_{i}}{K-1},A_{0}-\frac{B_{i}}{K-1},A_{0}-\frac{B_{i}}{K-1}] 
\end{array}\right]
\end{array}\right)^{2}
\end{align*}
The coefficient for $n^{-1/2}$ is 
\[
\lambda=-2\sum_{i}[u,B_{i}]\left[(K-1)[v,A_{0}-\frac{B_{i}}{K-1},A_{0}-\frac{B_{i}}{K-1}]\right]
\]
The coefficient for $n^{-1}$ is 
\begin{align*}
e & =-2\sum_{i}[u,B_{i}](K-1)[w,A_{0}-\frac{B_{i}}{K-1},A_{0}-\frac{B_{i}}{K-1},A_{0}-\frac{B_{i}}{K-1}]\\
 & +\sum_{i}\left[(K-1)[v,A_{0}-\frac{B_{i}}{K-1},A_{0}-\frac{B_{i}}{K-1}]-\frac{K-1}{K}\sum_{i}[v,A_{0}-\frac{B_{i}}{K-1},A_{0}-\frac{B_{i}}{K-1}]\right]^{2}
\end{align*}
We also have that $a=\left[u,A_{0}\right],b_{1}=K[v,A_{0},A_{0}]-\frac{K-1}{K}\sum_{i}[v,A_{0}-\frac{B_{i}}{K-1},A_{0}-\frac{B_{i}}{K-1}]=[v,A_{0},A_{0}]-\frac{1}{K(K-1)}\sum_{i}[v,B_{i},B_{i}],$
\[
b_{2}=K[w,A_{0},A_{0},A_{0}]-\frac{K-1}{K}\sum_{i}[w,A_{0}-\frac{B_{i}}{K-1},A_{0}-\frac{B_{i}}{K-1},A_{0}-\frac{B_{i}}{K-1}]
\]
\[
b_{1}^{\prime}=2v_{11}A_{0,1}+2\sum_{j=2}^{d}v_{1j}A_{0,j}=2\sum_{j=1}^{d}v_{1j}A_{0,j}\ 
\]
\[
\lambda^{\prime}=-4(K-1)\sum_{i}[u,B_{i}]\left(\sum_{j=1}^{d}v_{1j}(A_{0,j}-\frac{B_{i,j}}{K-1})-\sum_{j=1}^{d}v_{1j}A_{0,j}\right)=4\sum_{i}[u,B_{i}]\sum_{j=1}^{d}v_{1j}B_{i,j}\ 
\]
We observe that the formulas for $b_1^\prime$ and $\lambda^\prime$ are the same as the corresponding formulas for sectioning.

\subsection{Batching}

We have that 
\begin{align*}
W_{B} & =\frac{\sqrt{nK}\left(\frac{1}{K}\sum_{i}f\left(m+n^{-1/2}X_{i}\right)-f_{0}\right)}{\sqrt{\frac{1}{K-1}\sum_{i=1}^{K}\left(\sqrt{n}f\left(m+n^{-1/2}X_{i}\right)-\sqrt{n}\frac{1}{K}f\left(m+n^{-1/2}X_{i}\right)\right)^{2}}}
\end{align*}
Since 
\begin{align*}
f\left(m+n^{-1/2}X_{i}\right) & =f_{0}+n^{-1/2}[u,A_{0}+B_{i}]+n^{-1}[v,A_{0}+B_{i},A_{0}+B_{i}]\\
 & +n^{-3/2}[w,A_{0}+B_{i},A_{0}+B_{i},A_{0}+B_{i}]
\end{align*}
and 
\begin{align*}
\frac{1}{K}\sum_{i}f\left(m+n^{-1/2}X_{i}\right) & =f_{0}+n^{-1/2}[u,A_{0}]+n^{-1}\frac{1}{K}\sum_{i}[v,A_{0}+B_{i},A_{0}+B_{i}]\\
 & +n^{-3/2}\frac{1}{K}\sum_{i}[w,A_{0}+B_{i},A_{0}+B_{i},A_{0}+B_{i}],
\end{align*}
we have $a=[u,A_{0}]$,
\[
b_{1}=\frac{1}{K}\sum_{i}[v,A_{0}+B_{i},A_{0}+B_{i}],
\]
\[
b_{2}=\frac{1}{K}\sum_{i}[w,A_{0}+B_{i},A_{0}+B_{i},A_{0}+B_{i}]
\]
and the denominator is
\[
\sum_{i}\left(\begin{array}{c}
[u,B_{i}]+n^{-1/2}\left([v,A_{0}+B_{i},A_{0}+B_{i}]-\frac{1}{K}\sum_{i}[v,A_{0}+B_{i},A_{0}+B_{i}]\right)\\
+n^{-1}\left([w,A_{0}+B_{i},A_{0}+B_{i},A_{0}+B_{i}]-\frac{1}{K}\sum_{i}[w,A_{0}+B_{i},A_{0}+B_{i},A_{0}+B_{i}]\right)
\end{array}\right)^{2}
\]
Therefore, 
\[
\lambda=2\sum_{i}[u,B_{i}][v,A_{0}+B_{i},A_{0}+B_{i}]
\]
\[
e=2\sum_{i}\left[\begin{array}{c}
     [u,B_{i}][w,A_{0}+B_{i},A_{0}+B_{i},A_{0}+B_{i}]  \\
     +\left([v,A_{0}+B_{i},A_{0}+B_{i}]-\frac{1}{K}\sum_{i}[v,A_{0}+B_{i},A_{0}+B_{i}]\right)^{2} 
\end{array} \right]
\]
The expressions for $\lambda^{\prime}$ and $b_{1}^{\prime}$ are the same
as for sectioning.

\subsection{Sectioning-batching}

Just replace $\lambda$ and $e$ for sectioning with the $\lambda$ and $e$ for
batching. Other steps are the same as before.

\section{An Alternative Algorithm to Estimate Coefficient of the $n^{-1}$ Error for Batching}\label{sec: batching algorithm}

For batching (only), we propose an alternative algorithm given in Algorithm \ref{alg: batching}. It is not hard to see that Theorem \ref{thm: batching_expansion} along with the Edgeworth expansion given in Section 2.3 of \cite{Hall1992} imply the correctness of Algorithm \ref{alg: batching}. To illustrate how to use Algorithm \ref{alg: batching} and compare with Algorithm \ref{alg: error estimation}, we consider the example in Proposition \ref{prop: errorK2}. More precisely, consider $f(x,y)=x+\lambda y^{2}$ where $X,Y\sim N(0,1)$. Then $A_{1}\stackrel{d}{=}\left(X,Y\right)$. The cumulants of $X+\lambda n^{-1/2}Y^{2}$
are given by 
\[
\kappa_{1,n}=\lambda n^{-1/2},\kappa_{2,n}=1+2\lambda^{2}n^{-1},\kappa_{3,n}=O(n^{-3/2}),\kappa_{4,n}=O(n^{-3/2}),
\]
so we have $k_{1,2}=\lambda,k_{2,2}=2\lambda^{2}$. Therefore, 
\[
h_{1}(x)=-\lambda\phi(x),h_{2}(x)=-x\frac{3}{2}\lambda^{2}\phi(x).
\]
Then, following Step 3 of Algorithm \ref{alg: batching},
\[
p_{1}(x)=\lambda x\phi(x),p_{2}(x)=-\frac{3}{2}\lambda^{2}\left(1-x^{2}\right)\phi(x).
\]

\begin{algorithm}[htp]
\caption{An unbiased simulation scheme to compute coefficient of the $n^{-1}$ error for batching}
\label{alg: batching}
\begin{algorithmic}[1]
\Ensure derivatives of $f$ at $EX_1$ up to order 3, cumulants of $X_1$ up to order 4, testing statistic $W_B$. 

\State Let $A_1=\sqrt{n}(\bar{X}_1-EX_1)/Var([\nabla f,X_1-EX_1])$. Algebraically compute the cumulants of $[\nabla f,A_1]+n^{-1/2}[\nabla^2 f,A_1,A_1]+n^{-1}[\nabla^3 f,A_1,A_1,A_1]$ up to order 4 (with residual $O(n^{-3/2})$), in terms of $n$ and the cumulants of $X_1$. Moreover, write the cumulants $\kappa_{i,n},i=1,2,3,4$ (defined as the cumulants of order 1,2,3,4 respectively) as a series of $n^{-1/2}$: $\kappa_{1,n}=n^{-1/2}k_{1,2},\kappa_{2,n}=1+n^{-1}k_{2,2}+O(n^{-3/2}),\kappa_{3,n}=n^{-1/2}k_{3,n}+O(n^{-3/2}),\kappa_{4,n}=n^{-1}k_{4,1}+O(n^{-3/2})$.

\State Let $h_1(x)=-\{k_{1,2}+\frac{1}{6}k_{3,1}(x^2-1)\}\phi(x)$, 
$$h_2(x)=-x\{\frac{1}{2}(k_{2,2}+k_{1,2}^2)+\frac{1}{24}(k_{4,1}+4k_{1,2}k_{3,1})(x^2-3)+\frac{1}{72}k_{3,1}^2(x^4-10x^2+15)\}\phi(x). $$ Here $\phi(\cdot)$ is the pdf of standard normal

\State Let $p_1(x)=h_1^\prime(x)/\phi(x)$ and $p_2(x)=h_2^\prime(x)/\phi(x)$.

\State Generate $Z_1,Z_2,\dots,Z_K$ i.i.d. from standard normal. Derive the error term estimator:
\begin{align*}
 ER_B = & I_C(Z)\left[\sum_{1\leq i<j\leq K}p_{1}(Z_i)p_{1}(Z_j)+\sum_{1\leq i\leq K}p_{2}(Z_i)\right]
\end{align*}
Here, $C$ represents the set of $Z=(Z_1,\dots,Z_K)$ such that $-q\leq\sqrt{K(K-1)}\frac{\bar{Z}}{\sqrt{\sum_{i=1}^{K}\left(Z_i-\bar{Z}\right)^{2}}}\leq q$. 

\lastcon{An unbiased estimator of $c$  in \eqref{eq: c def} given by $ER_B$
}
\end{algorithmic}
\end{algorithm}

We run both Algorithms \ref{alg: error estimation} and \ref{alg: batching} $10^4$ times to estimate the theoretical coverage probabilities. The results are shown in Table \ref{tab: batching comparision}. When $K=2$, the estimated coverage probability of Algorithm \ref{alg: error estimation} is 0.495 which is much lower than the estimated coverage probability (0.788), while Algorithm \ref{alg: batching} gives a close approximation (0.774). The failure of Algorithm \ref{alg: error estimation} in this case verifies Proposition \ref{prop: batching small K}. When $K\geq 3$, Algorithms \ref{alg: error estimation} and \ref{alg: batching} have close coverages. Indeed, the differences between the estimated coverage probabilities of the two algorithms are smaller than the corresponding CI half widths. We also observe that when $K\geq 4$, Algorithm \ref{alg: error estimation} has a shorter CI.  Note that the CI half width is computed as  $ 1.96\frac{\text{empirical standard deviation}}{\sqrt{n_{rep}}}$ where $n_{rep}$ is the number of replications which is $10^4$ for this example. Therefore, the shorter CI implies that Algorithm \ref{alg: error estimation} has a smaller variance. This can be attributed to that the construction of Algorithm \ref{alg: error estimation} employs a conditioning argument which helps reduce the variance. Another observation is that when $K$ is larger, the differences between the actual coverage probabilities and the estimated coverage probabilities become larger. This is because when $K$ is large, the coverage error of batching is quite large so that the approximation of coverage error via expansion is not accurate in this regime.

\begin{table}[]
\caption{Estimated coverage probabilities for batching. ``Actual'' stands for the actual coverage estimated from experimental repetitions. ``Alg 1'' and ``Alg 2'' stand for the estimated coverage probabilities given by Algorithms \ref{alg: error estimation} and \ref{alg: batching} respectively. CI half widths 1 and 2 are the corresponding CI half widths.}
\centering

\begin{tabular}{|c|c|c|c|c|c|}
\hline
K  & Actual & Alg 1 & CI half width 1 & Alg 2 & CI half width 2 \\
\hline
2  & 0.788  & 0.495 & 0.024           & 0.774 & 0.007           \\
3  & 0.765  & 0.742 & 0.013           & 0.730 & 0.008           \\
4  & 0.742  & 0.710 & 0.006           & 0.715 & 0.009           \\
5  & 0.717  & 0.685 & 0.006           & 0.684 & 0.010           \\
6  & 0.695  & 0.649 & 0.005           & 0.644 & 0.011           \\
7  & 0.673  & 0.617 & 0.005           & 0.619 & 0.012           \\
8  & 0.652  & 0.585 & 0.005           & 0.590 & 0.013           \\
9  & 0.632  & 0.555 & 0.005           & 0.561 & 0.014           \\
10 & 0.612  & 0.524 & 0.005           & 0.523 & 0.015           \\
11 & 0.592  & 0.493 & 0.005           & 0.481 & 0.015           \\
12 & 0.573  & 0.468 & 0.006           & 0.450 & 0.016           \\
13 & 0.555  & 0.437 & 0.006           & 0.431 & 0.017           \\
14 & 0.537  & 0.406 & 0.006           & 0.404 & 0.018           \\
15 & 0.520  & 0.376 & 0.006           & 0.378 & 0.019           \\
16 & 0.503  & 0.348 & 0.007           & 0.333 & 0.019           \\
17 & 0.486  & 0.313 & 0.007           & 0.308 & 0.020           \\
18 & 0.471  & 0.289 & 0.007           & 0.281 & 0.021           \\
19 & 0.455  & 0.258 & 0.007           & 0.240 & 0.021           \\
20 & 0.439  & 0.227 & 0.008           & 0.233 & 0.022 \\
\hline
\end{tabular}
\label{tab: batching comparision}
\end{table}

\section{Proof of Theorem \ref{thm: validity}}\label{sec: proof_validity}
\begin{proof}
We consider sectioned jackknife for example. The analysis for other
methods is similar (indeed, easier). Moreover, in this proof we focus on the expansion for coverage of the symmetric CI, i.e., $P(-q\leq W_{SJ}\leq q)$. It will be clear that the proof could be easily adapted to the one-sided setting. We introduce some notations first.
Denote $m:=E_{P_{0}}X$ and $A_{i}=\sqrt{n}\left(\bar{X}_{i}-m\right)$
where $\bar{X}_{i}$ is the $i$-th batch average. Let $A_{0}=\frac{A_{1}+\dots+A_{K}}{K}$.
For $1\leq i\leq K$, let $B_{i}=A_{i}-A_{0}$. Denote $A_{0,j},1\leq j\leq d$
to be the $j$-th coordinate of $A_{0}$. Define $B_{i,j},1\leq i\leq K,1\leq j\leq d$
similarly. Denote $A$ as the $dK$-dimensional vector $(A_0,B_1,B_2,\dots,B_{K-1})$. Denote $A^{\prime}$ as the $\left(dK-1\right)$- dimensional
vector $(A_{0,2},\dots,A_{0,d},B_{1},\dots,B_{K-1})$. Let $\nabla f$ denote the gradient of $f$ at point $EX$.

First, we apply the Edgeworth expansion result in 
Theorem 20.1 of \citet{bhattacharya2010normal} to
approximate the distribution of batch averages (for the ease of reading, we included the statement of this theorem in Theorem \ref{thm: Edgeworth}), we have that there
exist polynomials $p_{j}(a),1\leq j\leq r$, such that (we include more details on this step in Section \ref{subsec: edgeworth details})
\begin{equation}
P(-q\leq W_{SJ}\leq q)=\tilde{P}(-q\leq W_{SJ}\leq q)+O(n^{-(r+1)/2})\label{eq: Edgeworth_approx}
\end{equation}
where under $\tilde{P}$, $A_{i},1\leq i\leq K$ has density $p(x)=\phi_{\Sigma}(x)\left(1+\sum_{i=1}^{r}n^{-i/2}p_{i}(x)\right)$ (Note that $p(x)$ can be negative when $n$ is small. In this case, we interpret $\tilde{P}$ as a signed measure.).
Here $\phi_{\Sigma}(x)$ is the limiting normal distribution of $A_{i}$.
In the sequel, unless otherwise mentioned, each statement is under
the approximated measure $\tilde{P}$.

Although we have the expansion for the distribution of $A_{i}$ under
$\tilde{P}$, this does not easily imply an expansion for the distribution
of $W_{SJ}$. This is because $W_{SJ}$ can not be expressed as a
fixed function of $(A_{1},A_{2},\dots,A_{K})$. Indeed, $\psi(\hat{P}_{(i)})=f\left(m+n^{-1/2}\left(\frac{KA_{0}-A_{i}}{K-1}\right)\right)$,
which is dependent on $n$ even when each $A_{i}$ is given. Our way
to handle this is to reformulate $-q\leq W_{SJ}\leq q$ as $F_{-}^{(n)}\leq A_{0,1}\leq F_{+}^{(n)}$.
In other words, we regard $W_{SJ}$ as a function of $\left(n^{-1/2},A_{0,1},A^{\prime}\right)$,
and then solve for the possible values of $A_{0,1}$ (given $A^{\prime}$
and $n^{-1/2}$) that makes $-q\leq W_{SJ}\leq q$. Essentially, this
is done by applying the implicit function theorem. Heuristically,
this is not difficult. But we will see that to get a valid expansion
for $P\left(-q\leq W_{SJ}\leq q\right)$, more care must be taken
to bound the moments of each term and the residual in the expansion. Let \[
E_{2}=\sum_{i=1}^{K}\left(\left(\nabla f\right)^{\top}B_{i}\right)^{2},F_{+}=\frac{q\sqrt{E_2}}{\sqrt{K(K-1)}}-\sum_{i=2}^d \nabla f_i A_{0,i}, F_{-}=\frac{-q\sqrt{E_2}}{\sqrt{K(K-1)}}-\sum_{i=2}^d \nabla f_i A_{0,i}.
\]
As one can check, when $n=\infty$, the expressions of $F_{+}^{n}$ and $F_{-}^{(n)}$ would be given by $F_{+}$ and $F_{-}$ respectively. The following lemma states that $F_{+}^{(n)}$ and $F_{-}^{n}$ have expansion around $F_{+}$ and $F_{-}$ and the terms in the expansion have bounded moments.
\begin{lem}
\label{lem: taylor_approximation}Suppose that the conditions of Theorem \ref{thm: validity} hold. There exist $F_{-}^{(n)}$ and $F_{+}^{(n)}$
(functions of $A^{\prime}$
and depend on $n$), such that 
\[
\tilde{P}\left(\left\{ -q\leq W_{SJ}\leq q\right\} \Delta\left\{ F_{-}^{(n)}\leq A_{0,1}\leq F_{+}^{(n)}\right\} \right)=O(n^{-(r+1)/2}).
\]
Moreover, the powers $\left(F_{+}^{(n)}-F_{+}\right)^{l}$ and
$\left(F_{-}^{(n)}-F_{-}\right)^{l}$, $l=1,2,\dots,r+1$ have expansion
\[
\left(F_{+}^{(n)}-F_{+}\right)^{l}=\sum_{m=l}^{r}n^{-m/2}\sqrt{E_{2}}\frac{{p}_{l,m}(A^{\prime})}{E_{2}^{m}}+n^{-(r+1)/2}R_{F,l}
\]
where each ${p}_{l,m}(A^{\prime})$ is a polynomial of $A^{\prime}$
and there exist $c>0$ such that $E\left[\left|\sqrt{E_{2}}\frac{p_{l,m}(A^{\prime})}{E_{2}^{m}}\right|^{1+c}\right]<\infty,E\left[\left|R_{F,l}\right|^{1+c}\right]<\infty$. Here, the expectation is taken under the limiting normal distribution of $A^{\prime}$.
\end{lem}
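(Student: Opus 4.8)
The plan is to treat $W_{SJ}$ as a smooth function of the single scalar $A_{0,1}$, with $A'$ and $s:=n^{-1/2}$ as parameters, and to locate the two values of $A_{0,1}$ at which this function crosses $\pm q$. Taylor-expanding $f$ around $m$ to order $r+1$ and collecting powers of $s$, the SJ statistic can be written as $W_{SJ}=g(s,A_{0,1},A')$, where $g$ is a ratio whose numerator is a polynomial in $(s,A_0,B_1,\dots,B_{K-1})$ with leading ($s=0$) term $a=(\nabla f)^\top A_0$, and whose denominator is the square root of a polynomial with leading term $E_2=\sum_i((\nabla f)^\top B_i)^2$. In the coordinates $(A_{0,1},A')$ the quantity $E_2$ is a function of $B_1,\dots,B_{K-1}$ only, hence of $A'$, and is independent of $A_{0,1}$; moreover $\partial g/\partial A_{0,1}$ at $s=0$ equals $\sqrt{K(K-1)}\,\nabla f_1/\sqrt{E_2}\neq 0$ (assume WLOG $\nabla f_1\neq 0$), so $g$ is strictly monotone in $A_{0,1}$ near its leading roots. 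Setting $g=q$ and $g=-q$ and solving for $A_{0,1}$ by the implicit function theorem yields $F_+^{(n)}$ and $F_-^{(n)}$ with $s=0$ values $F_+$ and $F_-$; matching powers of $s$ gives the coefficients $\gamma_m$ as rational functions of $A'$ whose denominators are powers of $E_2$ and whose numerators are polynomials (times at most one factor $\sqrt{E_2}$). Taking $l$-th powers and truncating at order $s^r$ then produces the claimed form with remainder $n^{-(r+1)/2}R_{F,l}$.

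The symmetric-difference estimate follows from monotonicity. On the region where $g$ is monotone in $A_{0,1}$ and the implicit solution is valid, the events $\{-q\le W_{SJ}\le q\}$ and $\{F_-^{(n)}\le A_{0,1}\le F_+^{(n)}\}$ coincide exactly, so the symmetric difference lies in the ``bad'' set where either $|A|$ is large or $E_2$ is small. Gaussian tails make $\{|A|\ge n^{\epsilon}\}$ negligible. For the near-degenerate part I would fix a threshold $\epsilon_n=n^{-(r+1)/(K-1)}$ and split on $\{E_2\le\epsilon_n\}$ versus $\{E_2>\epsilon_n\}$. Writing $(\nabla f)^\top B_i=\eta_i-\bar\eta$ with $\eta_i:=(\nabla f)^\top A_i$ i.i.d. Gaussian, one has $E_2\stackrel{d}{=}\sigma^2\chi^2_{K-1}$ with $\sigma^2=(\nabla f)^\top\Sigma\nabla f>0$, so $\tilde P(E_2\le\epsilon_n)\asymp\epsilon_n^{(K-1)/2}=n^{-(r+1)/2}$; and on $\{E_2>\epsilon_n\}$ the effective perturbation $s/\sqrt{E_2}\le n^{-1/2}\epsilon_n^{-1/2}=n^{-1/2+(r+1)/(2(K-1))}\to 0$ precisely when $K\ge r+3$, so the implicit-function expansion and the monotone identification hold there with remainder of the required order. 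This is exactly the step that consumes the hypothesis $K\ge r+3$.

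For the moment statements I would reduce everything to bounding $E[E_2^{-s}]$ under the limiting normal of $A'$. Since $E_2\stackrel{d}{=}\sigma^2\chi^2_{K-1}$, one has $E[E_2^{-s}]<\infty$ iff $s<(K-1)/2$. Each $p_{l,m}(A')$ is a polynomial in the Gaussian vector $A'$, hence has finite moments of every order, so a H\"older split isolates the negative power of $E_2$ from the polynomial factor, and it then suffices that the power of $E_2$ in each kept term and in $R_{F,l}$ stay below $(K-1)/2$ after multiplication by $1+c$ for some small $c>0$. The structural fact that keeps these powers small is that the odd-order corrections to the denominator (the coefficients $\lambda,e,\dots$) each carry a factor $(\nabla f)^\top B_i$, so by Cauchy--Schwarz they are $O(\sqrt{E_2})$; the division by the $\sqrt{E_2}$ coming from $\partial g/\partial A_{0,1}$ then cancels rather than creates singularities, keeping the relevant power of $E_2$ at most $(r+1)/2<(K-1)/2$, again consistent with $K\ge r+3$.

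I anticipate the main obstacle is the bookkeeping around the degenerate set $\{E_2\approx 0\}$: one must show simultaneously that the algebraic coefficients $\gamma_m$ and their powers have only mild singularities there (via the $(\nabla f)^\top B_i$ cancellations), and that the residual of the implicit-function expansion is integrable against the Edgeworth density uniformly over $\{E_2>\epsilon_n\}$. A secondary technical point is that $\tilde P$ is only a signed measure for small $n$, so the probability bounds must be phrased as integral bounds against $\phi_\Sigma(1+\sum n^{-i/2}p_i)$; since the $p_i$ are polynomials this affects neither the tail nor the $E_2$-threshold estimates, but it must be tracked. The remaining ingredients---monotonicity of $g$, the formal power-series inversion, and the Gaussian tail bounds---are routine once the threshold $\epsilon_n$ is chosen as above.
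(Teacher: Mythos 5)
Your proposal is correct and follows essentially the same route as the paper's proof: you reformulate the coverage event as $F_{-}^{(n)}\leq A_{0,1}\leq F_{+}^{(n)}$, obtain the expansion of the critical values via the implicit function theorem in powers of $n^{-1/2}$, exploit the same Cauchy--Schwarz cancellation (each correction carries a factor $(\nabla f)^{\top}B_i=O(\sqrt{E_2})$), and control all moments through the inverse-chi-squared behavior of $E_2$, which is exactly where $K\geq r+3$ enters (the paper's Lemma \ref{lem: finite_expectation}). The only differences are organizational---your explicit threshold $\epsilon_n=n^{-(r+1)/(K-1)}$ versus the paper's Markov-inequality ``with probability $1-O(n^{-(r+1)/2})$'' statements, and your defining $F_{\pm}^{(n)}$ as exact roots rather than roots of the truncated equation---neither of which changes the substance of the argument.
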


Based on this, we can write $-q\leq W_{SJ}\leq q$ as 
\[
F_{-}^{(n)}(A_{0,2},\dots,A_{0,d},B_{1},\dots,B_{K-1})\leq A_{0,1}\leq F_{+}^{(n)}(A_{0,2},\dots,A_{0,d},B_{1},\dots,B_{K-1}).
\]
Corresponding to $A=(A_{0},B_{1},B_{2},\dots,B_{K-1})$, let $a=(a_{0},b_{1},b_{2},\dots,b_{K-1})$
where each of $a_{0},b_{1},\dots,b_{K-1}$ has $d$ coordinates. Corresponding to $A^\prime$, let $a^{\prime}$
be $a$ with its first coordinate removed, i.e., $a^{\prime}=(a_{0,2},\dots,a_{0,d},b_{1},\dots,b_{K-1})$.
Then, following (\ref{eq: Edgeworth_approx}) and a linear transformation
argument, we have that there exist $p_{j,A}(a),1\leq j\leq r$, such
that 
\begin{align}
\tilde{P}(-q\leq W_{SJ}\leq q)& =\int\phi_{\tilde{\Sigma}}(a)\left(1+\sum_{j=1}^{r}n^{-j/2}p_{j,A}(a)\right)da\nonumber\\ &=\int\phi_{\Sigma^{\prime}}(a^{\prime})\left[\int_{F_{-}^{(n)}}^{F_{+}^{(n)}}\phi_{\tilde{\sigma}_{0}}(a_{0,1}-\tilde{\mu}_0)\left(1+\sum_{j=1}^{r}n^{-j/2}p_{j,A}(a)\right)da_{0,1}\right]da^{\prime}\label{eq: WSJ integration}
\end{align}
Here, $\Sigma$ is the variance of $A$, $\Sigma^{\prime}$ is the variance of $A^{\prime}$,
$\tilde{\sigma}_{0}$ and $\tilde{\mu}_0$ are the conditional variance and expectation of $A_{0,1}$ given $A^{\prime}$, when $A^{\prime}=a^{\prime}$ (under their joint limiting normal distribution), so that we have $\phi_{\Sigma^{\prime}}(a^{\prime})\phi_{\tilde{\sigma}_{0}}(a_{0,1}-\tilde{\mu}_0)=\phi_{\tilde{\Sigma}}(a)$where $\tilde{\Sigma}$ is covariance of $A=(A_0,B_1,\dots,B_{K-1})$.
For the inner integration, we have the following Taylor expansion
with residual
\begin{align}
 & \int_{F_{-}^{(n)}}^{F_{+}^{(n)}}\phi_{\tilde{\sigma}_{0}}(a_{0,1}-\tilde{\mu}_0)\left(1+\sum_{j=1}^{r}n^{-j/2}p_{j,A}(a)\right)da_{0,1}\nonumber \\
= & \int_{F_{-}}^{F_{+}}\phi_{\tilde{\sigma}_{0}}(a_{0,1}-\tilde{\mu}_0)\left(1+\sum_{j=1}^{r}n^{-j/2}p_{j,A}(a)\right)da_{0,1}\nonumber \\
 & +\sum_{m=1}^{r}\frac{1}{(m-1)!}\frac{\partial^{m-1}}{\partial a_{0,1}^{m-1}}\left[\phi_{\tilde{\sigma}_{0}}(a_{0,1}-\tilde{\mu}_0)\left(1+\sum_{j=1}^{r}n^{-j/2}p_{j,A}(a)\right)\right]|_{a_{0,1}=F_{+}}\left(F_{+}^{(n)}-F_{+}\right)^{m}\nonumber \\
 & -\sum_{m=1}^{r}\frac{1}{(m-1)!}\frac{\partial^{m-1}}{\partial a_{0,1}^{m-1}}\left[\phi_{\tilde{\sigma}_{0}}(a_{0,1}-\tilde{\mu}_0)\left(1+\sum_{j=1}^{r}n^{-j/2}p_{j,A}(a)\right)\right]|_{a_{0,1}=F_{-}}\left(F_{-}^{(n)}-F_{-}\right)^{m}\nonumber \\
 & +R_{7}\label{eq: integration_expansion}
\end{align}
Note that the derivatives of $\phi_{\tilde{\sigma}_{0}}(a_{0,1}-\tilde{\mu}_0)\left(1+\sum_{j=1}^{r}n^{-j/2}p_{j,A}(a)\right)$
have the form of $\phi_{\tilde{\sigma}_{0}}(a_{0,1}-\tilde{\mu}_0)$ multiplied by a polynomial of $a$. Therefore,
we have that the residual term can be bounded by 
\[
\left|R_{7}\right|\leq M(a^\prime)\left|F_{+}^{(n)}-F_{+}\right|^{r+1}+M(a^\prime)\left|F_{-}^{(n)}-F_{-}\right|^{r+1}
\]
where $M(a^\prime)$ is a polynomial of $a^\prime$.

Now, put the expansion in Lemma \ref{lem: taylor_approximation}
into (\ref{eq: integration_expansion}), and do the outer integration
for $A^{\prime}$ in \eqref{eq: WSJ integration}. (Lemma \ref{lem: taylor_approximation} indicates
that each coefficient in the expansion of $\left(F_{+}^{(n)}-F_{+}\right)^{l}$
has finite $1+c$ moment under the limiting normal distribution of
$A^{\prime}$. Observe that this still holds when we multiply the
coefficient with a polynomial of $A$, since $A$ has arbitrary order
of moments under $\hat{P}$), we get an expansion as a series of $n^{-1/2}$
as claimed.


The argument for the second part of the theorem, i.e., the error of a symmetric CI is of order $O(n^{-1})$ is given in Section \ref{subsec: symmetric CI}. 
\end{proof}

\subsection{The error of a symmetric CI is of order $n^{-1}$}\label{subsec: symmetric CI}


Consider sectioning first. Since we have shown the validity of the expansion as a series of $n^{-1/2}$ in the first part, to show that the error is of order $O(n^{-1})$ it suffices to show that the order is of order $o(n^{-1/2})$. 
With some Taylor expansion arguments as in Section \ref{sec: alg}, we can see that $W_{S}$ is given by
\begin{align}
W_S & =\frac{\nabla f^{T}\cdot A_{0}+\frac{1}{2\sqrt{nK}}A_{0}^{T}\left[\nabla^{2}f\right]A_{0}}{\sqrt{\frac{1}{K-1}\sum_{k=1}^{K}\left(\frac{\nabla f^{T}\cdot A_{k}}{\sqrt{K}}\right)^{2}}\left(1+\frac{1}{\sqrt{nK}}(A_0^Tc_b+c_a)\right)}+O_{p}\left(n^{-1}\right)\nonumber \\
 & =\frac{\nabla f^{T}\cdot A_{0}+\frac{1}{2\sqrt{nK}}A_{0}^{T}\left[\nabla^{2}f\right]A_{0}}{\sqrt{\frac{1}{K-1}\sum_{k=1}^{K}\left(\frac{\nabla f^{T}\cdot A_{k}}{\sqrt{K}}\right)^{2}}}\left(1-\frac{1}{2\sqrt{nK}}\left(A_{0}^{T}c_b+c_a\right)\right)+O_{p}\left(n^{-1}\right).\label{eq: 2nd_taylor}
\end{align}
where $c_a=\frac{\sum_{k=1}^{K}\frac{\nabla f^{T}\cdot B_{k}}{K}B_{k}^{T}\left[\nabla^{2}f\right]B_{k}}{\sum_{k=1}^{K}\left(\frac{\nabla f^{T}\cdot B_{k}}{\sqrt{K}}\right)^{2}},c_b=2\frac{\sum_{k=1}^{K}\frac{\nabla f^{T}\cdot B_{k}}{K}\left[\nabla^{2}f\right]B_{k}}{\sum_{k=1}^{K}\left(\frac{\nabla f^{T}\cdot B_{k}}{\sqrt{K}}\right)^{2}}$ (both of them are functions of $B_{1},\dots,B_{K}$).
Denote the above function as $g_{n}\left(A_{0},B_{1},\dots,B_{K-1}\right)$
(note that $B_{K}=-\left(B_{1}+\dots+B_{K-1}\right)$ so we do not need to include $B_K$ as an argument of $g_n$). Since $(A_0,B_1,\dots,B_{K-1})$ is a linear transformation of the batch averages, and the batch averages have valid Edgeworth expansions, we have that the joint distribution of $A_0,B_1,\dots,B_{K-1}$ admits a valid multivariate Edgeworth expansion: 
\[
P((A_0,B_1,\dots,B_{K-1})\in B) = \int_{B}\phi_{\tilde{\Sigma}}(\mathbf{z})(1+n^{-1/2}p(\mathbf{z}))d\mathbf{z} +o(n^{-1/2})
\]
for all Borel sets $B$. Here $\tilde{\Sigma}$ denotes the covariance of $(A_0,B_1,\dots,B_{K-1})$, and $p(\mathbf{z})$ is an odd polynomial. For the probability
that $-q\leq W_{S}\leq q$, we have that 
\begin{align*}
 & P(-q\leq W_{S}\leq q)\\
= & P(-q\leq g_{n}\left(B_{0},B_{1},\dots,B_{K-1}\right)\leq q)+o(n^{-1/2})\\
= & \int_{-q\leq g_{n}(\mathbf{z})\leq q}\left(\phi_{\tilde{\Sigma}}(\mathbf{z})+n^{-1/2}\phi_{\tilde{\Sigma}}(\mathbf{z})p\left(\mathbf{z}\right)\right)d\mathbf{z}+o(n^{-1/2})\\
= & \int_{-q\leq g_{n}(\mathbf{z})\leq q}\phi_{\tilde{\Sigma}}(\mathbf{z})d\mathbf{z}+n^{-1/2}\int_{-q\leq g_{\infty}(\mathbf{z})\leq q}\phi_{\tilde{\Sigma}}(\mathbf{z})p(\mathbf{z})+o(n^{-1/2}).
\end{align*}

Here $g_{\infty}(\mathbf{z})=\lim_{n\rightarrow\infty}g_{n}(\mathbf{z})=\frac{\nabla f^{T}\cdot z_{0}}{\sqrt{\frac{1}{K-1}\sum_{k=1}^{K}\left(\frac{\nabla f^{T}\cdot z_{k}}{\sqrt{K}}\right)^{2}}}$ and in the last equality above, we used that $g_{\infty}(\mathbf{z})-g(\mathbf{z})=O(n^{-1/2})$. Also note that $g_{\infty}$ satisfies
 $g_{\infty}(\mathbf{z})=g_{\infty}(-\mathbf{z})$ so $\int_{-q\leq g_{\infty}(\mathbf{z})\leq q}\phi_{\tilde{\Sigma}}(\mathbf{z})p(\mathbf{z})=0$.
Hence from the above displayed equality,
\begin{align}
 & P\left(-q\leq W_{S}\leq q\right)\nonumber \\
= & \int_{-q\leq g_{n}(\mathbf{z})\leq q}\phi_{\tilde{\Sigma}}(\mathbf{z})d\mathbf{z}+O(n^{-1})\nonumber \\
= & P\left(-q\leq t_{K-1}\leq q\right)+\int_{-q\leq g_{n}(\mathbf{z})\leq q}\phi_{\tilde{\Sigma}}(\mathbf{z})d\mathbf{z}-\int_{-q\leq g_{\infty}(\mathbf{z})\leq q}\phi_{\tilde{\Sigma}}(\mathbf{z})d\mathbf{z}+O(n^{-1}).\label{eq: W_B_decomposition}
\end{align}
Here we used $P\left(-q\leq t_{K-1}\leq q\right) = \int_{-q\leq g_{\infty}(\mathbf{z})\leq q}\phi_{\tilde{\Sigma}}(\mathbf{z})d\mathbf{z}$ since the limiting distribution of $W_S$ is $t_{K-1}$. Now it suffices to study the difference between $P(-q\leq g_{n}\left(Z_{0},Z_{1},\dots,Z_{K-1}\right)\leq q)$
and its counterpart as $n\rightarrow\infty$ and show that the difference is $o(n^{-1/2})$, where $Z_{0},Z_{1},\dots,Z_{K-1}$
follows from the limiting normal distribution of $(A_0,B_1,\dots,B_{K-1})$. In particular, we note that $Z_0$ is independent of $Z_{1},\dots,Z_{K-1}$ since $Cov(A_0,B_i)\rightarrow 0$ for each $i=1,2,\dots,K$.  For this probability, we can compute it as follows:
\begin{align*}
 & P(-q\leq g_{n}\left(Z_{0},Z_{1},\dots,Z_{K-1}\right)\leq q)\\
= & P\left(-q\leq\frac{\nabla f^{T}\cdot Z_{0}+\frac{1}{2\sqrt{nK}}Z_{0}^{T}\left[\nabla^{2}f\right]Z_{0}}{\sqrt{\frac{1}{K-1}\sum_{k=1}^{K}\left(\frac{\nabla f^{T}\cdot Z_{k}}{\sqrt{K}}\right)^{2}}}\left(1-\frac{1}{2\sqrt{nK}}\left(Z_{0}^{T}c_b(Z)+c_a(Z)\right)\right)\leq q\right)\\
= & P\left(-q\leq\frac{\nabla f^{T}\cdot Z_{0}+\frac{1}{2\sqrt{nK}}Z_{0}^{T}\left[\nabla^{2}f-c_b(Z)\nabla f^{T}\right]Z_{0}}{\sqrt{\frac{1}{K-1}\sum_{k=1}^{K}\left(\frac{\nabla f^{T}\cdot Z_{k}}{\sqrt{K}}\right)^{2}}}\left(1-\frac{c_a(Z)}{2\sqrt{nK}}\right)\leq q\right)\\
= & P\left(\begin{array}{cc}
     & -q\sqrt{\frac{1}{K-1}\sum_{k=1}^{K}\left(\frac{\nabla f^{T}\cdot Z_{k}}{\sqrt{K}}\right)^{2}}\left({1-\frac{c_a(Z)}{2\sqrt{nK}}}\right)^{-1} \\
    \leq & \nabla f^{T}\cdot Z_{0}+\frac{1}{2\sqrt{nK}}Z_{0}^{T}\left[\nabla^{2}f-c_b(Z)\nabla f^{T}\right]Z_{0} \\
    \leq & q\sqrt{\frac{1}{K-1}\sum_{k=1}^{K}\left(\frac{\nabla f^{T}\cdot Z_{k}}{\sqrt{K}}\right)^{2}}\left({1-\frac{c_a(Z)}{2\sqrt{nK}}}\right)^{-1}
\end{array}\right).
\end{align*}
Here corresponding to $c_a$ and $c_b$, we let $c_a(Z)=\frac{\sum_{k=1}^{K}\frac{\nabla f^{T}\cdot Z_{k}}{K}Z_{k}^{T}\left[\nabla^{2}f\right]Z_{k}}{\sum_{k=1}^{K}\left(\frac{\nabla f^{T}\cdot Z_{k}}{\sqrt{K}}\right)^{2}},c_b=2\frac{\sum_{k=1}^{K}\frac{\nabla f^{T}\cdot Z_{k}}{K}\left[\nabla^{2}f\right]Z_{k}}{\sum_{k=1}^{K}\left(\frac{\nabla f^{T}\cdot Z_{k}}{\sqrt{K}}\right)^{2}}.$ Conditional on $Z_{1},\dots,Z_{K-1}$ (note that the value of $c_a,c_b$ is determined by $Z_{1},\dots,Z_{K-1}$), by normality of the distribution of $Z_0$ and Edgeworth expansion, we
know that the conditional distribution function for $\nabla f^{T}\cdot Z_{0}+\frac{1}{2\sqrt{nK}}Z_{0}^{T}\left[\nabla^{2}f-b\nabla f^{T}\right]Z_{0}$ can be expanded as
$\Phi(q)+n^{-1/2}\tilde{p}_{1}(q)\phi(q)+O(n^{-1})$
 where $\tilde{p}_{1}$ is even (this evenness claim follows since $\nabla f^{T}\cdot Z_{0}+\frac{1}{2\sqrt{nK}}Z_{0}^{T}\left[\nabla^{2}f-b\nabla f^{T}\right]Z_{0}$ is a polynomial with the same form as Theorem 2.1 of \cite{Hall1992}). Thus, conditional on $Z_{1},\dots,Z_{K-1}$,
the above probability is given as (denote $q^{\prime}=q\sqrt{\frac{1}{K-1}\sum_{k=1}^{K}\left(\frac{\nabla f^{T}\cdot Z_{k}}{\sqrt{K}}\right)^{2}}$)
\[
\Phi\left(q^{\prime}\left(1+\frac{c_a(Z)}{2\sqrt{nK}}\right)\right)+n^{-1/2}\tilde{p}_{1}(q^{\prime})\phi(q^{\prime})-\Phi\left(-q^{\prime}\left(1+\frac{c_a(Z)}{2\sqrt{nK}}\right)\right)-n^{-1/2}\tilde{p}_{1}(q^{\prime})\phi(q^{\prime})+O(n^{-1})
\]
which is (by the evenness of $\tilde{p}_{1}$)
\[
\Phi\left(q^{\prime}\right)-\Phi\left(-q^{\prime}\right)+\phi(q^{\prime})q^{\prime}\frac{c_a(Z)}{\sqrt{nK}}+O(n^{-1}).
\]
Taking expectation w.r.t. $(Z_1,\dots,Z_{K-1})$ and note that $E[\Phi\left(q^{\prime}\right)-\Phi\left(-q^{\prime}\right)]=P(-q\leq g_{\infty}(Z_0,Z_1,\dots,Z_{K-1})\leq q)$, we conclude that 
\begin{align*}
 & P(-q\leq g_{n}\left(Z_{0},Z_{1},\dots,Z_{K-1}\right)\leq q)-P(-q\leq g_{\infty}\left(Z_{0},Z_{1},\dots,Z_{K-1}\right)\leq q)\\
= & E_{Z_{1},\dots Z_{K-1}}\left[\phi(q^{\prime})q^{\prime}\frac{a}{\sqrt{nK}}\right]+O(n^{-1})
\end{align*}
Noting that $a$ is odd (and $q^{\prime} $ is even) in $Z_{1},\dots,Z_{k-1}$, we have that
the expectation is 0, so the above difference is indeed $O(n^{-1})$.

To see that $W_{SB}$ has the same property, notice that
\[
nS_{\text{sec}}^2 = nS_{\text{batch}}^2 + \frac{K}{K-1}\left(\sqrt{n}\psi(\hat{P})-\frac{1}{K}\sum_{i=1}^{K}\sqrt{n}\psi(\hat{P}_i) \right)^2 =  nS_{\text{batch}}^2 + O_p(1)
\]
Here the second equality holds since $\sqrt{n}\psi(\hat{P})-\frac{1}{K}\sum_{i=1}^{K}\sqrt{n}\psi(\hat{P}_i)=O(n^{-1/2})$, which can be seen by plugging in expansions \begin{equation*}
\sqrt{nK}(\psi(\hat{P})-\psi(P_0))=\nabla f^{T}\cdot A_{0}+\frac{1}{2\sqrt{nK}}A_{0}^{T}\left[\nabla^{2}f\right]A_{0}+O_p\left(n^{-1}\right)
\end{equation*}
\begin{equation*}
\sqrt{n}(\psi(\hat{P}_{k})-\psi(P_0))=\frac{1}{\sqrt{K}}\nabla f^{T}\left(A_{0}+B_{k}\right)+\frac{n^{-1/2}}{2\sqrt{K}}\left(A_{0}+B_{k}\right)^{T}\left[\nabla^{2}f\right]\left(A_{0}+B_{K}\right)+O_p\left(n^{-1}\right),k=1,2,\dots,K
\end{equation*} and using the equation $\sum_{i=1}^K B_i=0$. This implies $W_{SB}=W_S+O_p(n^{-1})$. Since we have shown $P(-q\leq W_S \leq q) = O(n^{-1})$, the same holds for $W_{SB}$. 

Next, we consider sectioned jackknife. By examining the expansion for $W_{SJ}$ in Section \ref{sec: expansion other schemes}, we have that similar to \eqref{eq: 2nd_taylor},
\begin{align}
 W_{SJ}& =\frac{\nabla f^{T}\cdot A_{0}+\frac{1}{2\sqrt{nK}}A_{0}^{T}\left[\nabla^{2}f\right]A_{0}+n^{-1/2}\tilde{c}_d}{\sqrt{\frac{1}{K-1}\sum_{k=1}^{K}\left(\frac{\nabla f^{T}\cdot A_{k}}{\sqrt{K}}\right)^{2}}}\left(1-\frac{1}{2\sqrt{nK}}\left(A_{0}^{T}\tilde{c}_b+\tilde{c}_a\right)\right)+o_{p}\left(n^{-1/2}\right).
\end{align}
where $\tilde{c}_a$ is odd in $(B_1,\dots,B_K)$ and $\tilde{c}_b,\tilde{c}_d$ are even in $(B_1,\dots,B_K)$. Compared to sectioning, the difference is that we have a new term $\tilde{c}_d$. Following the argument for sectioning, it suffices to show that the difference between
\[
P\left(-\frac{q^\prime}{1-\frac{\tilde{c}_a(Z)}{2\sqrt{nK}}}-n^{-1/2}\tilde{c}_d(Z)\leq\nabla f^{T}\cdot Z_{0}+\frac{1}{2\sqrt{nK}}Z_{0}^{T}\left[\nabla^{2}f-\tilde{c}_b(Z)\nabla f^{T}\right]Z_{0}\leq\frac{q^\prime}{1-\frac{\tilde{c}_a(Z)}{2\sqrt{nK}}}-n^{-1/2}\tilde{c}_d(Z)\right)
\]
and $P(-q\leq g_{\infty}(Z_0,Z_1,\dots,Z_{K-1})\leq q)$ is $o(n^{-1/2})$. But note that the above probability is 
\begin{align*}
    & \Phi\left(q^{\prime}\left(1+\frac{\tilde{c}_a(Z)}{2\sqrt{nK}}\right)-n^{-1/2}\tilde{c}_d(Z)\right)+n^{-1/2}\tilde{p}_{1}(q^{\prime})\phi(q^{\prime}) \\
    & -\Phi\left(-q^{\prime}\left(1+\frac{\tilde{c}_a(Z)}{2\sqrt{nK}}\right)-n^{-1/2}\tilde{c}_d(Z)\right)-n^{-1/2}\tilde{p}_{1}(q^{\prime})\phi(q^{\prime})+O(n^{-1})
\end{align*}
From this, we can see that the contribution of $\tilde{c}_d$ to the $n^{-1/2}$ error is cancelled. Therefore, proceeding as in the proof for sectioning to handle $\tilde{c}_a$, we get the desired result.

\subsection{Proof of Lemma \ref{lem: taylor_approximation}}

First, we give a lemma that helps bound the moments for the inverse
of $E_{2}$. The proof is deferred to Section \ref{subsec: proof_validity_smaller}.
\begin{lem}
\label{lem: finite_expectation}Under our assumptions in Theorem \ref{thm: validity}, we have that
for any $\epsilon<1$, there exists $c>0$ such that $\tilde{E}\left[\left|E_{2}^{-(r+1+\epsilon)/2}\right|^{1+c}\right]<\infty$.
\end{lem}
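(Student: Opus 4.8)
The plan is to reduce $E_2$ to a one-dimensional quadratic form and identify it, up to a constant, with a chi-squared variable whose negative moments of the required order are finite precisely when $K$ is large enough. Write $u=\nabla f$ and $\xi_i=u^\top A_i$. Since $u^\top A_0=\frac{1}{K}\sum_j u^\top A_j=\bar\xi$, we have $E_2=\sum_{i=1}^K(u^\top B_i)^2=\sum_{i=1}^K(\xi_i-\bar\xi)^2$, a quadratic form of rank $K-1$ in $(\xi_1,\dots,\xi_K)$. The nonsingularity of $\Sigma$ together with $\nabla f\neq 0$ guarantees $\sigma_u^2:=u^\top\Sigma u>0$, so under the leading Gaussian part of $\tilde P$ the scalars $\xi_i$ are i.i.d. $N(0,\sigma_u^2)$ and $E_2/\sigma_u^2$ follows a $\chi^2_{K-1}$ law.

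First I would record the exact negative-moment condition for the chi-squared: $E[(\chi^2_m)^{-s}]=2^{-s}\Gamma(m/2-s)/\Gamma(m/2)<\infty$ iff $s<m/2$. With the target exponent $s=(r+1+\epsilon)(1+c)/2$ and $m=K-1$, finiteness requires $(r+1+\epsilon)(1+c)<K-1$. Since $K\geq r+3$ gives $K-1\geq r+2$ while $\epsilon<1$ gives $r+1+\epsilon<r+2$, there is strict slack, so one may choose $c\in\bigl(0,\tfrac{1-\epsilon}{r+1+\epsilon}\bigr)$ to keep $(r+1+\epsilon)(1+c)<r+2\leq K-1$. This is exactly where the hypothesis $K\geq r+3$ enters.

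The remaining work is to verify that the correction (signed) terms of $\tilde P$ do not destroy integrability. Since $\tilde P$ has density $\phi_\Sigma(\cdot)(1+\sum_{i=1}^r n^{-i/2}p_i(\cdot))$ on each $A_i$, I would bound the total-variation integral $\int E_2^{-s}\,|d\tilde P|$ by $\int E_2^{-s}\prod_i\phi_\Sigma(a_i)P(a_i)\,da$ for a fixed polynomial $P\geq 1$ dominating $|1+\sum_i n^{-i/2}p_i|$ uniformly for $n$ large. Because $E_2$ depends on the $a_i$ only through the projections $\xi_i=u^\top a_i$, Fubini lets me integrate out the directions orthogonal to $u$ first; projecting a polynomial-weighted Gaussian onto the line spanned by $u$ again yields a univariate Gaussian times a polynomial, so the integral collapses to $\int_{\mathbb{R}^K}E_2^{-s}\prod_i\phi_{\sigma_u}(\xi_i)Q(\xi_i)\,d\xi$ with $E_2=\sum_i(\xi_i-\bar\xi)^2$. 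Changing to coordinates $(\bar\xi,\eta)$, where $\eta\in\mathbb{R}^{K-1}$ records the centered components, $E_2$ is a positive-definite quadratic form in $\eta$; near its only singularity $\eta=0$ the integrand behaves like $\|\eta\|^{-2s}$ with the polynomial weight bounded, which is integrable over $\mathbb{R}^{K-1}$ iff $2s<K-1$, the same condition as before, while at infinity the Gaussian decay dominates the polynomial and $E_2^{-s}$ is bounded.

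The main obstacle I expect is the bookkeeping needed to certify that the singular part of the integral is governed entirely by the Gaussian leading term, i.e.\ that the polynomial corrections, although they may be negative and depend on $n$, contribute nothing new near $\{E_2=0\}$. This is exactly what the uniform polynomial domination and the marginalization step handle; once the integral is written in the $(\bar\xi,\eta)$ coordinates, the local power counting near $\eta=0$ and the Gaussian tail estimate at infinity are routine. Collecting these pieces yields $\tilde E\bigl[|E_2^{-(r+1+\epsilon)/2}|^{1+c}\bigr]<\infty$ for the chosen $c$, as claimed.
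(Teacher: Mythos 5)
Your proposal is correct, and its core coincides with the paper's proof of Lemma \ref{lem: finite_expectation}: both reduce $E_2$ under the leading Gaussian part to (a positive constant $\nabla f^\top \Sigma \nabla f$ times) a $\chi^2_{K-1}$ variable, both locate the role of $K\geq r+3$ in the negative-moment threshold $E[(\chi^2_{K-1})^{-s}]<\infty$ iff $s<(K-1)/2$, and both pick $c>0$ small enough that $s=(r+1+\epsilon)(1+c)/2$ stays below that threshold. Where you genuinely diverge is the treatment of the signed Edgeworth correction terms. The paper stays coordinate-free: it first proves finiteness of the pure Gaussian integral $\int E_2(a)^{-s}\phi_\Sigma(a_1)\cdots\phi_\Sigma(a_K)\,da$ (and of a slightly higher power), then absorbs the polynomial factors $1+\sum_i n^{-i/2}p_i^{(5)}(a)$ via H\"older's inequality, using that polynomials have moments of all orders under the Gaussian. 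You instead dominate the corrections by a fixed $n$-independent polynomial, marginalize the directions orthogonal to $u=\nabla f$, pass to $(\bar\xi,\eta)$ coordinates, and do radial power counting near the singular set $\{\eta=0\}$, obtaining integrability iff $2s<K-1$. Both routes are valid; yours is more self-contained (no H\"older step, no separate higher-moment lemma) and makes the dimension count behind the hypothesis $K\geq r+3$ completely explicit, while the paper's avoids introducing coordinates at all. One further point in your favor: your requirement $(r+1+\epsilon)(1+c)<K-1$, giving $c<(1-\epsilon)/(r+1+\epsilon)$ in the worst case $K-1=r+2$, is the correct constraint; the paper's stated condition $(r+1+\epsilon)(1+c)/2<r+2$ carries a factor-of-two slip (it should read $(r+1+\epsilon)(1+c)<r+2$), which is harmless for the lemma's conclusion since only the existence of some $c>0$ is needed, but your version is the careful one.
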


Then, we give the proof of Lemma \ref{lem: taylor_approximation}:
\begin{proof}
Consider $F_{+}^{(n)}$ for example. Note that when $W_{SJ}=q$, we
have that $\sqrt{nK}\left(\bar{J}-\psi_{0}\right)=q\sqrt{\frac{1}{K-1}\sum_{i=1}^{K}(\sqrt{n}J_{i}-\sqrt{n}\bar{J})^{2}}.$
We can do a Taylor expansion for each $\psi(\hat{P}_{(i)})=f\left(m+n^{-1/2}\left(\frac{KA_{0}-A_{i}}{K-1}\right)\right)$
and $\psi(\hat{P})$:
\begin{align*}
\psi(\hat{P}_{(i)}) & =\psi_{0}+\sum_{m=1}^{r}n^{-m/2}\sum_{1\leq i_{1}\dots i_{m}\leq d}\nabla f_{i_{1}i_{2}\dots i_{m}}(m)\left(\frac{KA_{0}-A_{i}}{K-1}\right)_{i_{1}}\dots\left(\frac{KA_{0}-A_{i}}{K-1}\right)_{i_{m}}\\
 & +n^{-(r+1)/2}\sum_{1\leq i_{1}\dots i_{m}\leq d}\nabla f_{i_{1}i_{2}\dots i_{r+1}}(\xi)\cdot\left(\frac{KA_{0}-A_{i}}{K-1}\right)_{i_{1}}\dots\left(\frac{KA_{0}-A_{i}}{K-1}\right)_{i_{r+1}}
\end{align*}
for some $\xi$ on the line segment between $m$ and $m+n^{-1/2}\left(\frac{KA_{0}-A_{i}}{K-1}\right)$.
Moreover, the residual $R$ can be bounded by $C\sum_{i=0}^{K}\left\Vert A_{i}\right\Vert ^{r+1}$
where $C$ and $C_{i}$ in the sequel are constants independent of
$n$. By the continuity of the $(r+1)$-th derivative of $f$, we
may pick some $\delta>0$ such that $\sup_{\left\Vert x-m\right\Vert \leq\delta}\left\Vert \nabla^{r+1}f\right\Vert $
is bounded. Note that when $\left\Vert n^{-1/2}\left(\frac{KA_{0}-A_{i}}{K-1}\right)-m\right\Vert \leq\delta$,
the last term in the preceding displayed equation can be bounded by
$Cn^{-(r+1)/2}\sup_{\left\Vert x-m\right\Vert \leq\delta}\left\Vert \nabla^{r+1}f\right\Vert \sum_{i=0}^{K}\left\Vert A_{i}\right\Vert ^{r+1}$.
Since we assumed that $X$ has finite $r+2$ moments, (by Markov's
inequality) we can show that the probability that this $\left\Vert n^{-1/2}\left(\frac{KA_{0}-A_{i}}{K-1}\right)-m\right\Vert >\delta$
is of order $O(n^{-(r+1)/2})$. Therefore, we conclude that with probability
$1-O(n^{-(r+1)/2})$, the residual of the Taylor expansion is bounded
by $Cn^{-(r+1)/2}\sum_{i=0}^{K}\left\Vert A_{i}\right\Vert ^{r+1}$
for some $C$ independent of $n$. Similar expansion holds for $\psi(\hat{P})$.
With these expansions, notice that each of $J_{i}$ and $\bar{J}$
can be written as linear combinations of $\psi(\hat{P}_{(i)})$ and
$\psi(\hat{P})$, we also have an expansion for them:
\[
J_{i}=\psi_{0}+n^{-1/2}\left(\nabla f\right)^{T}A_{i}+\sum_{j=2}^{r}n^{-j/2}p_{i,j}(A)+n^{-(r+1)/2}R_{i}
\]
\[
\bar{J}=\psi_{0}+n^{-1/2}\left(\nabla f\right)^{T}A_{0}+\sum_{j=2}^{r}n^{-j/2}\bar{p}_{j}(A)+n^{-(r+1)/2}\bar{R}
\]
where each $p_{i,j}(A)$ is a polynomial in $A$
of order $j$, $R_{i}$ can be bounded by $C_{1}\sum_{i=0}^{K}\left\Vert A_{i}\right\Vert ^{r+1}$
(with probability $1-O(n^{-(r+1)/2})$), $\bar{p}_{j}$ and $\bar{R}$
are the average of $p_{i,j},R_{i},1\leq i\leq K$ respectively. Therefore,
when $W_{SJ}=q$ or equivalently $\sqrt{nK}\left(\bar{J}-\psi_{0}\right)=q\sqrt{\frac{1}{K-1}\sum_{i=1}^{K}(\sqrt{n}J_{i}-\sqrt{n}\bar{J})^{2}}$,
we have (using the expansion for $J_{i}$ and $\bar{J}$ above)
\begin{equation}
\left|\sqrt{K}\left(\nabla f\cdot A_{0}+\sum_{j=1}^{r}n^{-\frac{j}{2}}\bar{p}_{j+1}(A)\right)-q\sqrt{\frac{E_{2}+\sum_{j=1}^{r}n^{-\frac{j}{2}}p_{j+1}^{(1)}(A)+n^{-\frac{r+1}{2}}R_{2}}{K-1}}\right|\leq n^{-\frac{r+1}{2}}R_{3}\label{eq: Taylor_p}
\end{equation}
where $E_{2}=\text{\ensuremath{\sum_{i=1}^{K}\left(\left(\nabla f\right)^{T}\left(A_{i}-A_{0}\right)\right)^{2}}},p_{2}^{(1)}(A)=\sum_{i=1}^{K}\left(\nabla f\right)^{T}\left(A_{i}-A_{0}\right)q_{i}(A)$,
$q_{i}(A)$ is a polynomial of $A$. For for each $j=3,4,\dots,r+1$,
$p_{j}^{(1)}(A)$ is a polynomial of $A$, and $\left|R_{2}\right|,\left|R_{3}\right|$
can be bounded by $C_{2}\sum_{i=0}^{K}\left\Vert A_{i}\right\Vert ^{r+1}$
(with probability $1-O(n^{-(r+1)/2})$).

Observe that 
\[
\sqrt{E_{2}+\sum_{j=1}^{r}n^{-\frac{j}{2}}p_{j+1}^{(1)}(A)+n^{-\frac{r+1}{2}}R_{2}}=\sqrt{E_{2}}\sqrt{1+\frac{\sum_{j=1}^{r}n^{-\frac{j}{2}}p_{j+1}^{(1)}(A)}{E_{2}}+\frac{n^{-\frac{r+1}{2}}R_{2}}{E_{2}}}
\]
By a Taylor expansion for the composition of $\sqrt{1+x}$ with $1+\frac{\sum_{j=1}^{r}n^{-\frac{j}{2}}p_{j+1}^{(1)}(A)}{E_{2}}+\frac{n^{-\frac{r+1}{2}}R_{2}}{E_{2}}$,
we have that the above can be expanded as follows
\begin{equation}
\sqrt{E_{2}}\left(1+\sum_{j=1}^{r}n^{-\frac{j}{2}}\frac{p_{j+1}^{(2)}(A)}{E_{2}^{j}}+n^{-\frac{r+1}{2}}\frac{R_{4}}{E_{2}^{r+1}}\right).\label{eq: Taylor_denom}
\end{equation}
Here, $p_{j+1}^{(2)}(A)$ can be bounded by $E_{2}^{j/2}$
times a polynomial of $A$ and $R_{4}$ can be bounded by $E_{2}^{(r+1)/2}$
times a polynomial of $A$. (The proof is given in Section \ref{subsec: proof_validity_smaller}).
Therefore, by Lemma \ref{lem: finite_expectation}, each of $\frac{p_{j+1}^{(2)}(A)}{E_{2}^{j}},1\leq j\leq r$
and $\frac{R}{E_{2}^{r+1}}$ has finite $1+c$ moment for some $c>0$. Putting
(\ref{eq: Taylor_denom}) into (\ref{eq: Taylor_p}), we get that
\[
\left|\sqrt{K}\left(\nabla f\cdot A_{0}+\sum_{j=1}^{r}n^{-\frac{j}{2}}\bar{p}_{j+1}(A)\right)-q\sqrt{E_{2}}\left(1+\sum_{j=1}^{r}n^{-\frac{j}{2}}\frac{p_{j+1}^{(2)}(A)}{E_{2}^{j}}\right)\right|\leq n^{-\frac{r+1}{2}}R_{3}.
\]
Therefore, with an error of $(\frac{1}{\sqrt{K}\nabla_1f}+o(1)(n^{-\frac{r+1}{2}}R_{3}$ (note that $E_2$ does not depend on $A_{0,1}$ so the leading term for the derivative of $A_{0,1}$ is $\sqrt{K}\nabla_1f$), we can find
the critical value for $A_{0,1}$ that makes $W_{SJ}=q$ (i.e., $F_{+}^{(n)}$) by solving
\begin{equation}
\sqrt{K}\left(\nabla f\cdot A_{0}+\sum_{j=1}^{r}n^{-\frac{j}{2}}\bar{p}_{j+1}(A)\right)-q\sqrt{E_{2}}\left(1+\sum_{j=1}^{r}n^{-\frac{j}{2}}\frac{p_{j+1}^{(2)}(A)}{E_{2}^{j}}\right)=0\label{eq: implicit function}
\end{equation}
Denote the LHS above as $F(A_{0,1},n^{-1/2})$. Observe that $F(A_{0,1},n^{-1/2})$
is a polynomial whose coefficients are determined by $A^{\prime}.$
We are interested in solving $A_{0,1}$ as a function of $n^{-1/2}$
(denote by $y(x)$) and get the derivatives of $y(\cdot)$ via implicit
function theorem. We have that
\[
F_{x}+F_{y}y_{x}=0\Rightarrow y_{x}=-F_{x}/F_{y},
\]
\[
F_{xx}+2F_{xy}y_{x}+F_{y}y_{xx}=0\Rightarrow y_{xx}=-\left(F_{xx}+F_{xy}y_{x}\right)/F_{y}.
\]
Here $F_{x},F_{y}$ stands for the derivative of $F$ with its first
and second argument, respectively. From (\ref{eq: implicit function}),
we have that $\frac{\partial^{m+l}}{\partial x^{m}\partial y^{l}}F|_{x=0}$
has form $\sqrt{E_{2}}\frac{{p}_{m}^{(3)}(A^{\prime})}{E_{2}^{m}}$
where $\left|{p}_{m}^{(3)}(A^{\prime})\right|$ can be bounded by
$E_{2}^{m/2}$ times a polynomial of $A^{\prime}$ (which follows
from \eqref{eq: Taylor_denom}). Also note that $F_{y}=\sqrt{K}\left(\nabla f\right)_{1}\neq0$.
With this observation, by induction similar to the above displayed
derivations, we can show that $\frac{d^{m}}{dx^{m}}y(x)|_{x=0},1\leq m\leq r+1$
has form $\sqrt{E_{2}}\frac{{p}_{m}^{(4)}(A^{\prime})}{E_{2}^{m}}$
where $\left|{p}_{m}^{(4)}(A^{\prime})\right|$ can also
be bounded by $E_{2}^{m/2}$ times a polynomial of $A^{\prime}$.
Therefore, by Taylor expansion on $y(\cdot)$ around 0, we have 
\[
F_{+}^{(n)}=y(n^{-1/2})=F_{+}+\sum_{m=1}^{r}n^{-m/2}\frac{1}{m!}\sqrt{E_{2}}\frac{\tilde{p}_{m}^{\prime}(A^{\prime})}{E_{2}^{m}}+n^{-(r+1)/2}R_{5}
\]
Moreover, the residual $R_{5}$ has form $\sqrt{E_{2}}\frac{\tilde{p}_{r+1}^{\prime}(A^{\prime})}{E_{2}^{r+1}}$
where $\left|\tilde{p}_{r+1}^{\prime}(A^{\prime})\right|$ can be
bounded by $E_{2}^{(r+1)/2}$ times a polynomial of $A^{\prime}$ (similar to the argument for the form of the residual term in the proof of \eqref{eq: Taylor_denom}).
which has finite $1+c$ moment by Lemma \ref{lem: finite_expectation}.

More generally, for $\left(F_{+}^{(n)}-F_{+}\right)^{l}$, since it
can be written as $\left(y(n^{-1/2})-y(0)\right)^{l}$, with the derivatives
for $y$, we also have its expansion of form (heuristically, we can
see the following expansion by taking the $l$-th power in the preceding
expansion, but that would induce a power of $R_{4}$ which may not
have finite expectation)
\[
\left(F_{+}^{(n)}-F_{+}\right)^{l}=\sum_{m=l}^{r}n^{-m/2}\sqrt{E_{2}}\frac{{p}_{l,m}(A^{\prime})}{E_{2}^{m}}+n^{-(r+1)/2}R_{F,l}.
\]
where $\left|{p}_{l,m}(A^{\prime})\right|$ can
be bounded by $E_{2}^{m/2}$ times a polynomial of $A^{\prime}$ and
the residual $R_{F,l}$ has form $\sqrt{E_{2}}\frac{{p}_{l,r+1}(A^{\prime})}{E_{2}^{r+1}}$
where $\left|{p}_{l,r+1}(A^{\prime})\right|$ can be
bounded by $E_{2}^{(r+1)/2}$ times a polynomial of $A^{\prime}$.
This gives the form of expansion as claimed. The finiteness of $1+c$
moment follows from Lemma (\ref{lem: finite_expectation}) (note that
the existence of $1+c$ moment preserves when we multiply a polynomial
of $A^{\prime}$).
\end{proof}

\subsubsection{Proof of Lemma \ref{lem: finite_expectation}\label{subsec: proof_validity_smaller}}

\begin{proof}
It suffices to show that $E\left[E_{2}^{-(r+1+\epsilon)/2}\right]<\infty$
for any $\epsilon<1$ (once we can show this, the claim holds for
any $c$ such that $(r+1+\epsilon)(1+c)/2<r+2$). Note that the limiting
distribution of $E_{2}^{-1}$ follows a inverse-chi-squared distribution
with $K-1$ degrees of freedom, whose density is proportional to $x^{-K/2-1}e^{-1/2x}$.
When $K\geq r+3$, 
\[
\int_{1}^{\infty}x^{-K/2-1}e^{-1/2x}x^{(r+1+\epsilon)/2}dx\leq\int_{1}^{\infty}x^{-(3-\epsilon)/2}e^{-1/2x}dx<\infty
\]
also, because of the $e^{-1/2x}$ term in the expression of the density,
it is clear that $\int_{0}^{1}x^{-K/2-1}e^{-1/2x}x^{(r+1+\epsilon)/2}dx<\infty$.
Therefore, $\int_{0}^{\infty}x^{-K/2-1}e^{-1/2x}x^{(r+1)/2}dx<\infty$,
which means that under the limiting distribution, $E_{2}^{-(r+1+\epsilon)/2}$
has finite expectation. This means
\[
\int E_{2}\left(a\right)^{-(r+1+\epsilon)/2}\phi_{\Sigma}(a_{1})\dots\phi_{\Sigma}(a_{K})da_{1}\dots da_{K}<\infty.
\]
Here $E_{2}(a_1,\dots,a_K):=\sum_{i=1}^{K}\left(\nabla f(EX)^{\top}(a_i-\bar{a})\right)^2$ is the function such that $E_{2}(A_1,\dots,A_K)=E_{2}$.
Moreover, for any $c$ such that $(r+1+\epsilon)(1+c)/2<r+2$ we still
have
\[
\int\left[E_{2}\left(a\right)^{-(r+1+\epsilon)/2}\right]^{1+c}\phi_{\Sigma}(a_{1})\dots\phi_{\Sigma}(a_{K})da_{1}\dots da_{K}<\infty.
\]
Therefore, for any $p(a)$ that is polynomial in $a$, by Holder's
inequality and the fact that $p(A)$ has an arbitrary order of moments
under the limiting distribution, we have
\begin{equation}
\int E_{2}\left(a\right)^{-(r+1+\epsilon)/2}p(a)\phi_{\Sigma}(a_{1})\dots\phi_{\Sigma}(a_{K})da_{1}\dots da_{K}<\infty.\label{eq: inv_chi_poly}
\end{equation}
Observing that $\tilde{E}\left[E_{2}^{-(r+1+\epsilon)/2}\right]$
can be written as 
\[
\int E_{2}\left(a\right)^{-(r+1+\epsilon)/2}\left(1+\sum_{i}n^{-i/2}p_{i}^{(5)}(a)\right)\phi_{\Sigma}(a_{1})\dots\phi_{\Sigma}(a_{K})da_{1}\dots da_{K},
\]
from (\ref{eq: inv_chi_poly}) we conclude the desired result.
\end{proof}

\begin{proof}[Proof of \eqref{eq: Taylor_denom}]
Let $z(h):=1+\frac{\sum_{j=1}^{r}h^{j}p_{j+1}^{(1)}(A)}{E_{2}}+\frac{h^{r+1}R_{2}}{E_{2}}$
and $r(x):=\sqrt{1+x}$. By Taylor expansion, we have that
\begin{equation}\label{eq: rz expansion}
    r(z(n^{-1/2})) = r(z(0)) + \sum_{k=1}^r\frac{d^k}{dh^k}(r\circ z)(h)|_{h=0}n^{-k/2} + \frac{d^{r+1}}{dh^{r+1}}(r\circ z)(h)|_{h=h^{\prime}}n^{-(r+1)/2}
\end{equation}
for some $0<h^{\prime}<n^{-1/2}$. Therefore, to get \eqref{eq: Taylor_denom}, it suffices to argue that the derivatives of $r\circ z$ have the claimed forms.

By the chain rule, we have
\[
\frac{d}{dh}\left(r\circ z\right)\left(h\right)=r^{\prime}(z(h))z^{\prime}(h)
\]
\[
\frac{d^2}{dh^{2}}\left(r\circ z\right)\left(h\right)=r^{\prime\prime}(z(h))\left(z^{\prime}(h)\right)^{2}+r^{\prime}(z(h))z^{\prime\prime}(h)
\]
In general, by induction, it is not hard to show that $\frac{d}{dh^{k}}\left(r\circ q\right)\left(h\right)$
can be written as 
\begin{equation}
\frac{d^k}{dh^{k}}\left(r\circ q\right)\left(h\right)=\sum_{
    \mathbf{i}=(i_{1},\dots,i_{l}),i_{1}+\dots+i_{l}=k,i_1,\dots,i_l\in\{1,2,\dots,k\}  
}C_{\mathbf{i}}r^{[i_{0}]}(z(h))z^{[i_{1}]}(h)\dots z^{[i_{l}]}(h).\label{eq: derivative_compo}
\end{equation}
Here $r^{[i_0]}$ denote the $i_0$-th derivative of $r$ and $z^{[i_1]},\dots,z^{[i_l]}$ are similarly defined. Note that $\left|p_{2}^{(1)}(A)\right|=\left|\sum_{i=1}^{K}\left(\nabla f\right)^{T}\left(A_{i}-A_{0}\right)q_{i}(A)\right|\leq\sqrt{E_{2}}\left|\sum_{i=1}^{K}\left\Vert \nabla f\right\Vert _{2}q_{i}(A)\right|$ (see the expression for $p_2^{(1)}(A)$ after \eqref{eq: Taylor_p}),
we have $z^{\prime}(0)=\frac{p_{2}^{(1)}(A)}{E_{2}}$ where $p_{2}^{(1)}(A)$
can be bounded by $\sqrt{E_{2}}$ times a polynomial of $A$. For
any $k\geq2$, from the expression of $z(h)$ we also have that $z^{[k]}(0)=\frac{p_{k+1}^{(5)}(A)}{E_{2}^{k}}$
where $p_{k+1}^{(5)}(A)$ can be bounded by $E_2^{k-1}$ times a polynomial of $A$. Note that when $k\geq 2$ we have $k-1\geq k/2$ and $E_2$ itself is bounded by a polynomial of $A$, we also have that $p_{k+1}^{(2)}(A)$ can be bounded by $E_2^{k/2}$ times a polynomial of $A$.
Therefore, for any $i_{1}\dots i_{l}$ such that $i_{1}+\dots+i_{l}=k$,
$z^{[i_{1}]}(0)\dots z^{[i_{l}]}(0)=\frac{p_{\mathbf{i}}^{(6)}(A)}{E_{2}^{k}}$
where $p_{\mathbf{i}}^{(3)}(A)$ can be bounded by $E_{2}^{k/2}$
times a polynomial of A. Also note that $r^{(i_{0})}(z(0))$ is a constant that does not depend on $A$. So
by \eqref{eq: derivative_compo} we have that $\frac{d^k}{dh^{k}}\left(r\circ q\right)\left(0\right)=\frac{p_{k+1}^{(7)}(A)}{E_{2}^{k}}$
where $p_{k+1}^{(7)}(A)$ can be bounded by $E_{2}^{k/2}$ times a
polynomial of A. From this, by \eqref{eq: rz expansion}, we get \eqref{eq: Taylor_denom} except for the residual. 

The additional difficulty for handling the residual is that the derivatives are evaluated at a point that is not 0. But as we will show, the knowledge that $0<h^{\prime}<n^{-1/2}$ suffices to control the residual.
When $k\geq2$, we note that
\[
z^{[k]}(h)=\frac{\sum_{j=k}^{r}{k \choose j}h^{j-k}p_{j+1}^{(1)}(A)}{E_{2}}+{k \choose r+1}\frac{h^{r+1-k}R_{2}}{E_{2}}
\]
When $h<1/2$, we have that
\begin{equation}
\left|z^{[k]}(h)\right|\leq\sum_{j=k}^{r}\frac{{k \choose j}\left|p_{j+1}^{(1)}(A)\right|}{E_{2}}+{k \choose r+1}\frac{R_{2}}{E_{2}}.\label{eq: dz_h}
\end{equation}
The RHS can be bounded by $\frac{p_{k+1}^{(8)}(A)}{E_{2}^{k}}$ where
$p_{k+1}^{(8)}(A)$ can be bounded by $E_{2}^{k/2}$ times a polynomial
of A, as in the $h=0$ case discussed above. When $k=1$, we have
that 
\[
z^{\prime}(h)=z^{\prime}(0)+h\left[\frac{\sum_{j=2}^{r}jh^{j-2}p_{j+1}^{(1)}(A)}{E_{2}}+(r+1)\frac{h^{r-1}R_{2}}{E_{2}}\right]
\]
It follows from Lemma \ref{lem: finite_expectation} that $P(E_{2}^{-1/2}>n^{1/2})=O(n^{-(r+1)/2})$.
Therefore, with probability $1-O(n^{-(r+1)/2})$ we can assume that
$n^{-1/2}<\sqrt{E_{2}}$. Therefore, for any $h<n^{-1/2}$, 
\begin{equation}
\left|z^{\prime}(h)\right|\leq\left|z^{\prime}(0)\right|+\sum_{j=k}^{r}\frac{{k \choose j}\left|p_{j+1}^{(1)}(A)\right|}{E_{2}^{1/2}}+{k \choose r+1}\frac{R_{2}}{E_{2}^{1/2}}.\label{eq: dz_h_1}
\end{equation}
 Since we have shown that $z^{\prime}(0)=\frac{p_{2}^{(1)}(A)}{E_{2}}$
where $p_{2}^{(1)}(A)$ can be bounded by $\sqrt{E_{2}}$ times a
polynomial of $A$, from the above we have that $\left|z^{\prime}(h)\right|\leq\frac{p_{2}^{(8)}(A)}{E_{2}}$
where $p_{2}^{(8)}(A)$ can be bounded by $\sqrt{E_{2}}$ times a
polynomial of $A$. Moreover, note that for any $h<n^{-1/2}$,
\begin{align*}
\left|z(h)-1\right| & =\left|\frac{\sum_{j=1}^{r}h^{j}p_{j+1}^{(1)}(A)}{E_{2}}+\frac{h^{r+1}R_{2}}{E_{2}}\right|\leq\sum_{j=1}^{r}h^{j}\left|\frac{p_{j+1}^{(1)}(A)}{E_{2}}\right|+h^{r+1}\frac{R_{2}}{E_{2}}\\
 & \leq h\frac{\left|\sum_{i=1}^{K}\left\Vert \nabla f\right\Vert _{2}q_{i}(A)\right|}{\sqrt{E_{2}}}+\sum_{j=2}^{r}h^{j}\left|\frac{p_{j+1}^{(1)}(A)}{E_{2}}\right|+h^{r+1}\frac{R_{2}}{E_{2}}\\
 & \leq n^{-\left(1-\frac{r+1}{r+1+\epsilon}\right)/2}\left|\sum_{i=1}^{K}\left\Vert \nabla f\right\Vert _{2}q_{i}(A)\right|+n^{-\left(j-\frac{2(r+1)}{r+1+\epsilon}\right)/2}\sum_{j=2}^{r}\left|p_{j+1}^{(1)}(A)\right|+n^{-\left(r+1-\frac{2(r+1)}{r+1+\epsilon}\right)/2}R_{2}\\
 & (\text{when } n^{-1/2}<E_{2}^{\frac{r+1+\epsilon}{2(r+1)}})
\end{align*}
By Lemma \ref{lem: finite_expectation} we have that $P(n^{-1/2}<E_{2}^{\frac{r+1+\epsilon}{2(r+1)}})=1-O(n^{-(r+1)/2})$ when $\epsilon>0$ is sufficiently small.
Note that the RHS above is smaller than $1/2$ with high probability
(i.e., with probability $1-O(e^{-n^{\alpha}})$ for some $\alpha>0$
since it has the form $n^{-\alpha}$ times polynomials of $A$, which
has moments of arbitrary order under $\tilde{P}$). Therefore, with
probability $1-O(n^{-(r+1)/2})$, we have that $\left|z(h)-1\right|<1/2$.
This gives an absolute bound for $r^{[i_{0}]}(z(h))$. With this and
the bounds for $z^{\prime}(h)$ and $z^{[k]}(h)$, $k\geq2$ that
we derived, by (\ref{eq: derivative_compo}) we have that the residual of \eqref{eq: rz expansion} gives the claimed form of residual in \eqref{eq: Taylor_denom}.
\end{proof}

\subsection{Proof of \eqref{eq: Edgeworth_approx}}\label{subsec: edgeworth details}
Consider the i.i.d. sequence \[\mathrm{X}_i:=(X_{i},X_{n+i},\dots,X_{(K-1)n+i}),i=1,2,\dots,n\] Then, the vector of all batch averages can be seen as the average of $\mathrm{X}_i,i=1,2,\dots,n$. With the imposed conditions on $X_i$, it is not hard to verify that the distribution of $\mathrm{X}_i$ satisfies the conditions in Theorem \ref{thm: Edgeworth}. We regard the indicator $I_{SJ}^{(n)}({\bar{X}}_1,\bar{X}_2,\dots,\bar{X}_K):=I(-q\leq W_{SJ}\leq q)$ as a function of all the batch averages, which is in turn a function of $\bar{\mathrm{X}}$.  Therefore, we may let $f$ in Theorem \ref{thm: Edgeworth} be this function. Now we are in the setting of Theorem \ref{thm: Edgeworth}, so it suffices to show that the residual guaranteed by Theorem \ref{thm: Edgeworth} is of order $O(n^{-(r+1)/2})$. We choose $s=r+3$ in Theorem \ref{thm: Edgeworth}. Then, since $f$ is bounded by 1, we have that $M_{s^{\prime}}(f)\leq 1$ which implies that $M_{s^{\prime}}(f)\delta_1(n)=o(n^{-(r+1)/2})$. Next, we show that the other residual term given in Theorem \ref{thm: Edgeworth}, i.e., $\bar{\omega}_{f}\left(2 e^{-d n}: \Phi_{0, V}\right)=O(n^{-(r+1)/2})$, is bounded by $O(n^{-(r+1)/2})$. Here, $V$ is the product distribution of $K$ r.v. with covariance equal to the covariance of $X_1$. This is heuristically correct since this term represents the oscillation of $f$ in an exponentially small neighborhood.

Consider the function of batch averages induced by the SJ statistic
\[
W_{SJ}(x):=\frac{\sqrt{K}\left(\bar{J}(x)-f(EX)\right)}{\sqrt{\frac{1}{K-1}\sum_{i=1}^{K}(J_{i}(x)-\bar{J}(x))^{2}}}
\]
where $J_i(x)=Kf(\bar{x})-(K-1)f(x_{(i)})$
For its gradient, we have that 
\begin{equation}
\nabla W_{SJ}(x)=\frac{\sqrt{K(K-1)}\nabla\bar{J}}{\sqrt{\sum_{i=1}^{K}(J_{i}(x)-\bar{J}(x))^{2}}}-\frac{\sqrt{K(K-1)}\left(\bar{J}(x)-f(EX)\right)\sum_{i=1}^{K}(J_{i}(x)-\bar{J}(x))\left(\nabla J_{i}-\nabla\bar{J}\right)}{\sum_{i=1}^{K}(J_{i}(x)-\bar{J}(x))^{2}\sqrt{\sum_{i=1}^{K}(J_{i}(x)-\bar{J}(x))^{2}}}\label{eq: dWSJ}
\end{equation}

Since our target is to bound $\omega_{I_{SJ}}(2e^{-dn}:\Phi_{0,V})$ which is the oscillation under the limiting normal distribution, in what follows, we suppose all $X_i$ are normal with variance $\Sigma$. By Gaussian concentration, with probability $1-O(n^{-(r+1)/2})$ we have that
$\left\Vert \bar{X}_{i}-EX\right\Vert \leq\delta$ for each $i=1,2,\dots,K$ where
$\delta$ is a value such that $\sup_{\left\Vert x-EX\right\Vert \leq\delta}\nabla f(x)\leq C$.
Moreover, from the assumption that $Var_{P}X$ is nonsingular and $\nabla f\neq0$,
it is not hard to show that there exists $\lambda>0$ such that $P\left(\sup_{\left\Vert x-X\right\Vert \leq e^{-\gamma n}}\sum_{i=1}^{K}(J_{i}(x)-\bar{J}(x))^{2}<n^{-\lambda}\right)=O(n^{-(r+1)/2})$.
Therefore, from the preceding displayed equation, we conclude that
there exist $\lambda>0$ such that $P\left(\sup_{\left\Vert x-X\right\Vert \leq e^{-\gamma n}}\left\Vert \nabla W_{SJ}(x)\right\Vert \leq n^{\lambda}C\right)=1-O(n^{-(r+1)/2})$.
Therefore, by Lagrange mean value theorem, we have that 
\begin{equation}
P\left(\sup_{\left\Vert x-X\right\Vert \leq e^{-\gamma n}}\left\Vert W_{SJ}(x)-W_{SJ}(X)\right\Vert \leq n^{\lambda}e^{-\gamma n}C\right)=1-O(n^{-(r+1)/2}).\label{eq: WSJ osc bound}
\end{equation}
Pick any $0<\gamma^{\prime}<\gamma$. Note that $P\left(q-e^{-\gamma^{\prime}n}\leq W_{SJ}(X)\leq q+e^{-\gamma^{\prime}n}\right)=O(n^{-(r+1)/2})$
since otherwise $W_{SJ}$ would have unbounded density around $q$.
Similarly $P\left(-q-e^{-\gamma^{\prime}n}\leq W_{SJ}(X)\leq-q+e^{-\gamma^{\prime}n}\right)=O(n^{-(r+1)/2})$.
Note also that $n^{\lambda}e^{-\gamma n}C$ can be bounded by $Ce^{-\gamma^{\prime}n}$.
Therefore, from (\ref{eq: WSJ osc bound}) we get 
\[
P\left(\sup_{\left\Vert x-X\right\Vert \leq e^{-\gamma n}}\left\Vert I(-q\leq W_{SJ}(x)\le q)-I(-q\leq W_{SJ}(X)\leq q)\right\Vert =1\right)=O(n^{-(r+1)/2})
\]
which implies that $\bar{\omega}_{I_{SJ}}(e^{-\gamma n},\Phi_{0,V})=O(n^{-(r+1)/2}).$
\section{Proofs of Other Theorems}\label{sec: proof other}

\begin{proof}[Proof of \eqref{eq: WB integration}]
Denote $Z_{i}=\sqrt{n}\left(\psi(\hat{P}_{i})-\psi\right)/\sigma,i=1,2,\dots,K$.
Let $\tilde{P}$ be the measure (possibly signed when $n$ is
small) such that 
\[
\tilde{P}\left((-\infty,q]\right)=\Phi(q)+\sum_{j=1}^{r}n^{-j/2}p_{j}(q)\phi(q)
\]
Let $P_{0}$ be the measure induced by the distribution of $Z_{i}$,
i.e., $P_{0}\left((-\infty,q]\right)=P(Z_{i}\leq q)$. It suffices
to show that 
\begin{equation}
\left|P_{0}^{\otimes n}\left(\{z_{1},\dots,z_{K}:f(z)\leq q\}\right)-\tilde{P}^{\otimes n}\left(\{z_{1},\dots,z_{K}:f(z)\leq q\}\right)\right|=O(n^{-(r+1)/2})\label{eq: thm1_toshow}
\end{equation}
Here $P^{\otimes n}$ stands for the product measure of $n$ copies
of $P$. Then, note that $f(z_{1},\dots,z_{K})\leq q\iff(K-1)/K\left(z_{1}+\dots+z_{K}\right)^{2}\leq q\sum_{i=1}^{K}\left(z_{i}-\frac{1}{K}\sum_{j}z_{j}\right)^{2}$
which can be formulated as 
\[
z_{1}\in\left[z^{-}(z_{2},\dots,z_{K}),z^{+}(z_{2},\dots,z_{K})\right]
\]
or 
\[
z_{1}\in(-\infty,z^{-}(z_{2},\dots,z_{K})]\cup[z^{+}(z_{2},\dots,z_{K}),\infty)
\]
where $z^{+},z^{-}$ are the two roots of $z_{1}$ that solve $(K-1)/K\left(z_{1}+\dots+z_{K}\right)^{2}=q\sum_{i=1}^{K}\left(z_{i}-\frac{1}{K}\sum_{j}z_{j}\right)^{2}$.
Therefore, applying \eqref{eq: Edgeworth assumption}  at $z^{+}$ and $z^{-}$, by the uniformity
assumption we have that there exists a deterministic $C$ such that
\[
\left|P_{0}(\{z_{1}:f(z_{1},z_{2},\dots,z_{K})\leq q\})-\tilde{P}(\{z_{1}:f(z_{1},z_{2},\dots,z_{K})\leq q\})\right|\leq Cn^{-(r+1)/2},\forall z_{2},\dots,z_{K}.
\]
Therefore, by Fubini's theorem, 
\begin{align*}
 & \left|P_{0}^{\otimes n}\left(\{z_{1},\dots,z_{K}:f(z)\leq q\}\right)-\tilde{P}\times P_{0}^{\otimes(n-1)}\left(\{z_{1},\dots,z_{K}:f(z)\leq q\}\right)\right|\\
= & \left|P_{0}^{\otimes(n-1)}\left(P_{0}(\{z_{1}:f(z_{1},z_{2},\dots,z_{K})\leq q\})-\tilde{P}(\{z_{1}:f(z_{1},z_{2},\dots,z_{K})\leq q\})\right)\right|\\
\leq & P_{0}^{\otimes(n-1)}Cn^{-(r+1)/2}\\
= & Cn^{-(r+1)/2}
\end{align*}
Similarly, for any $i=0,1,\dots,K-1$, we can show that 
\begin{align*}
 & \left|\tilde{P}^{\otimes i}\times P_{0}^{\otimes(n-i)}\left(\{z_{1},\dots,z_{K}:f(z)\leq q\}\right)-\tilde{P}^{\otimes(i+1)}\times P_{0}^{\otimes(n-i-1)}\left(\{z_{1},\dots,z_{K}:f(z)\leq q\}\right)\right|\\
= & \left|\tilde{P}^{\otimes i}\times P_{0}^{\otimes(n-i-1)}\left(P_{0}(\{z_{i+1}:f(z_{1},z_{2},\dots,z_{K})\leq q\})-\tilde{P}(\{z_{i+1}:f(z_{1},z_{2},\dots,z_{K})\leq q\})\right)\right|\\
\leq & \left|\tilde{P}^{\otimes i}\times P_{0}^{\otimes(n-i-1)}\right|Cn^{-(r+1)/2}\\
\leq & Cn^{-(r+1)/2}\left|\tilde{P}\right|^{i}\left(\mathbb{R}\right)
\end{align*}
Here for a signed measure $P$, $|P|$ stands for $P^{+}+P^{-}$ where
$P^{+},P^{-}$ are the positive and negative parts of $P$. By the
definition of $\tilde{P}$, we have $\left|\tilde{P}\right|\left(\mathbb{R}\right)=\int_{-\infty}^{\infty}\left|\frac{d}{dq}\left(\Phi(q)+\sum_{j=1}^{r}n^{-j/2}p_{j}(q)\phi(q)\right)\right|dq$
which can be uniformly bounded over all $n$. Hence $\left|\tilde{P}\right|\left(\mathbb{R}\right)\leq C_{1}<\infty$
where $C_{1}$ does not depend on $n$. Therefore, from the above
we have
\[
\left|\tilde{P}^{\otimes i}\times P_{0}^{\otimes(n-i)}\left(\{z_{1},\dots,z_{K}:f(z)\leq q\}\right)-\tilde{P}^{\otimes(i+1)}\times P_{0}^{\otimes(n-i-1)}\left(\{z_{1},\dots,z_{K}:f(z)\leq q\}\right)\right|\leq CC_{1}^{i}n^{-(r+1)/2}
\]
Summing the above over $i=0,1,\dots K-1$ and applying the triangle inequality,
we get that
\[
\left|P_{0}^{\otimes n}\left(\{z_{1},\dots,z_{K}:f(z)\leq q\}\right)-\tilde{P}^{\otimes n}\left(\{z_{1},\dots,z_{K}:f(z)\leq q\}\right)\right|\leq C\sum_{i=0}^{K-1}C_{1}^{i}n^{-(r+1)/2}
\]
which gives \eqref{eq: thm1_toshow}.

\end{proof}

\begin{proof}[Proof of Proposition \ref{prop: errorK2}]
The notations essentially follows from the notations in the proof
of Theorem 2. But to make things concrete we will introduce them again
for this example. Suppose that the batch averages multiplied by $\sqrt{n}$
are given by $(X_{1},Y_{1})$ and $(X_{2},Y_{2})$ (both of them are
2-d standard normal) Let $A_{0,1}=\frac{X_{1}+X_{2}}{2},A_{0,2}=\frac{Y_{1}+Y_{2}}{2},B_{1,1}=-B_{2,1}=\frac{X_{1}-X_{2}}{2},B_{1,2}=-B_{2,2}=\frac{Y_{1}-Y_{2}}{2}$
(note that $A_{0,1},A_{0,2},B_{1,1},B_{1,2}$ are standard normal
and independent). We have that 
\begin{align*}
W_{S} & =\sqrt{2}\frac{A_{0,1}+n^{-1/2}\left(A_{0,2}\right)^{2}}{\sqrt{\left(\left(B_{1,1}+n^{-1/2}Y_{1}^{2}-n^{-1/2}A_{0,2}^{2}\right)^{2}+\left(B_{2,1}+n^{-1/2}Y_{2}^{2}-n^{-1/2}A_{0,2}^{2}\right)^{2}\right)}}\\
 & =\frac{\sqrt{2}\left[A_{0,1}+n^{-1/2}\left(A_{0,2}\right)^{2}\right]}{\sqrt{\left(2B_{1,1}^{2}+n^{-1/2}A_{0,2}B_{1,1}B_{1,2}+n^{-1}\left(\left((B_{1,2}+A_{0,2})^{2}-A_{0,2}^{2}\right)^{2}+\left((B_{2,2}+A_{0,2})^{2}-A_{0,2}^{2}\right)^{2}\right)\right)}}\\
 & =\sqrt{2}\frac{A_{0,1}+n^{-1/2}\left(A_{0,2}\right)^{2}}{\sqrt{\left(2B_{1,1}^{2}+n^{-1/2}A_{0,2}B_{1,1}B_{1,2}+n^{-1}\left(B_{1,2}^{4}+8B_{1,2}^{2}A_{0,2}^{2}\right)\right)}}
\end{align*}
Therefore, $W_{S}\leq q$ can be written as 
\[
A_{0,1}\leq F_{+}^{(n)}:=\frac{q}{\sqrt{2}}\sqrt{\left(2B_{1,1}^{2}+n^{-1/2}A_{0,2}B_{1,1}B_{1,2}+n^{-1}\left(B_{1,2}^{4}+8B_{1,2}^{2}A_{0,2}^{2}\right)\right)}-n^{-1/2}A_{0,2}^{2}.
\]
Similarly, $W_{S}\geq-q$ is equivalent to 
\[
A_{0,1}\geq F_{-}^{(n)}:=-\frac{q}{\sqrt{2}}\sqrt{\left(2B_{1,1}^{2}+n^{-1/2}A_{0,2}B_{1,1}B_{1,2}+n^{-1}\left(B_{1,2}^{4}+8B_{1,2}^{2}A_{0,2}^{2}\right)\right)}-n^{-1/2}A_{0,2}^{2}.
\]
So we have $P(-q\leq W_{S}\leq q)=P(F_{-}^{(n)}\leq A_{0,1}\leq F_{+}^{(n)})=E\left[\Phi(F_{+}^{(n)})-\Phi(F_{-}^{(n)})\right]$
where $\Phi$ is the c.d.f. of standard normal. Let $F_{+}=\frac{q}{\sqrt{2}}\sqrt{2B_{1,1}^{2}}$
and $F_{-}=-\frac{q}{\sqrt{2}}\sqrt{2B_{1,1}^{2}}$. By the definition
of $t_{1}$ distribution, we have that $P(-q\leq t_{1}\leq q)=E\left[\Phi(F_{+})-\Phi(F_{-})\right]$.
Therefore, the coverage error, i.e., $\left|P(-q\leq W_{S}\leq q)-P(-q\leq t_{1}\leq q)\right|$,
can be expressed as 
\[
\left|E\left[\left(\Phi(F_{+}^{(n)})-\Phi(F_{+})\right)-\left(\Phi(F_{-}^{(n)})-\Phi(F_{-})\right)\right]\right|
\]
which by Taylor expansion can be expressed as
\[
\left|E\left[\phi(F_{+})(F_{+}^{(n)}-F_{+})+\phi^{\prime}(\xi_{+})(F_{+}^{(n)}-F_{+})^{2}-\phi(F_{-})(F_{-}^{(n)}-F_{-})-\phi^{\prime}(\xi_{-})(F_{+}^{(n)}-F_{+})^{2}\right]\right|
\]
for some $\xi_{+},\xi_{-}\in\mathbb{R}$. Note that $F_{+}=-F_{-}$
and $\phi(F_{+})=\phi(F_{-})$. So from the preceding we get that
the coverage error is
\begin{equation}
\left|E\left[\phi(F_{+})(F_{+}^{(n)}-F_{-}^{(n)}-2F_{+})+\phi^{\prime}(\xi_{+})(F_{+}^{(n)}-F_{+})^{2}-\phi^{\prime}(\xi_{-})(F_{-}^{(n)}-F_{-})^{2}\right]\right|.\label{eq: sec K2 er}
\end{equation}

We study $F_{+}^{(n)}-F_{+}$. We observe that 
\begin{align*}
 & \sqrt{2B_{1,1}^{2}+n^{-1/2}A_{0,2}B_{1,1}B_{1,2}+n^{-1}\left(B_{1,2}^{4}+8B_{1,2}^{2}A_{0,2}^{2}\right)}-\sqrt{2B_{1,1}^{2}}\\
= & \frac{n^{-1/2}A_{0,2}B_{1,1}B_{1,2}+n^{-1}\left(B_{1,2}^{4}+8B_{1,2}^{2}A_{0,2}^{2}\right)}{\sqrt{2B_{1,1}^{2}+n^{-1/2}A_{0,2}B_{1,1}B_{1,2}+n^{-1}\left(B_{1,2}^{4}+8B_{1,2}^{2}A_{0,2}^{2}\right)}+\sqrt{2B_{1,1}^{2}}}
\end{align*}
is dominated by (note that $2B_{1,1}^{2}+n^{-1/2}A_{0,2}B_{1,1}B_{1,2}+n^{-1}\left(B_{1,2}^{4}+8B_{1,2}^{2}A_{0,2}^{2}\right)\geq n^{-1}\left(B_{1,2}^{4}+8B_{1,2}^{2}A_{0,2}^{2}\right)-\frac{n^{-1}A_{0,2}^{2}B_{1,2}^{2}}{8}\geq\frac{1}{4}n^{-1}\left(B_{1,2}^{4}+8B_{1,2}^{2}A_{0,2}^{2}\right)$)
\[
\frac{n^{-1/2}\left|A_{0,2}B_{1,1}B_{1,2}\right|}{\sqrt{2B_{1,1}^{2}}}+n^{-1/2}\frac{B_{1,2}^{4}+8B_{1,2}^{2}A_{0,2}^{2}}{\frac{1}{2}\sqrt{B_{1,2}^{4}+8B_{1,2}^{2}A_{0,2}^{2}}}=\frac{n^{-1/2}\left|A_{0,2}B_{1,2}\right|}{\sqrt{2}}+2n^{-1/2}\sqrt{B_{1,2}^{4}+8B_{1,2}^{2}A_{0,2}^{2}}
\]
Therefore
\begin{equation}
\left|F_{+}^{(n)}-F_{+}\right|\leq n^{-1/2}\left(\frac{\left|A_{0,2}B_{1,2}\right|}{\sqrt{2}}+2\sqrt{B_{1,2}^{4}+8B_{1,2}^{2}A_{0,2}^{2}}+A_{0,2}^{2}\right).\label{eq: domi func}
\end{equation}
We note that the RHS is bounded by $n^{-1/2}$ times a polynomial
of normal which have arbitrary order of moments. Also note that the
derivative of normal density is uniformly bounded, we have that $E\left[\phi^{\prime}(\xi_{+})(F_{+}^{(n)}-F_{+})^{2}\right]=O(n^{-1})$.
Similarly we have $E\left[\phi^{\prime}(\xi_{-})(F_{-}^{(n)}-F_{-})^{2}\right]=O(n^{-1})$.
Therefore, by \eqref{eq: sec K2 er} we have that the coverage error
is
\begin{equation}
\left|E\left[\phi(F_{+})(F_{+}^{(n)}-F_{-}^{(n)}-2F_{+})\right]\right|+O(n^{-1}).\label{eq: sec K2 er2}
\end{equation}

Note that conditional on $B_{1,1}$, the expectation $E\left[\phi(F_{+})(F_{+}^{(n)}-F_{-}^{(n)}-2F_{+})|B_{1,1}\right]$
is always positve (which is implied by the positiveness of the second
order derivative in \eqref{eq: sec derivative}). Also note that $\phi(F_{+})$
is uniformly bounded away from 0 when $B_{1,1}$ belongs to a bounded
set. Therefore, to show that the above is $\omega(n^{-1})$, it suffices
to show that 
\[
E\left[F_{+}^{(n)}-F_{-}^{(n)}-2F_{+};\left|B_{1,1}\right|\leq M\right]=\omega(n^{-1}).
\]
Plugging in the expressions for $F_{+}^{(n)},F_{-}^{(n)},F_{+}$,
the above can be restated as: for independent $X,Y,Z\sim N(0,1)$,
we want to show that 
\begin{equation}
\lim_{n\rightarrow\infty}n\mathbb{E}\left[\sqrt{2X^{2}+n^{-1/2}XYZ+n^{-1}\left(Y^{4}+8Y^{2}Z^{2}\right)}-\sqrt{2X^{2}};|X|\leq M\right]=\infty\label{eq: batch K2 lim}
\end{equation}
To show this, we let $f(h)=\sqrt{2X^{2}+hXYZ+h^{2}\left(Y^{4}+8Y^{2}Z^{2}\right)}$.
Then we have that $Ef^{\prime}(0)=E\frac{XYZ}{2\sqrt{2}\left|X\right|}=0$,
and for any $\theta\in[0,h]$ we have that 
\begin{equation}
f^{\prime\prime}(\theta)=\frac{8X^{2}Y^{4}+63X^{2}Y^{2}Z^{2}}{4\left(2X^{2}+\theta XYZ+\theta^{2}\left(Y^{4}+8Y^{2}Z^{2}\right)\right)^{3/2}}\geq\frac{8X^{2}Y^{4}+63X^{2}Y^{2}Z^{2}}{4\left(2X^{2}+h\left|XYZ\right|+h^{2}\left(Y^{4}+8Y^{2}Z^{2}\right)\right)^{3/2}}.\label{eq: sec derivative}
\end{equation}
By Taylor expansion, we have that $f(h)=f(0)+f^{\prime}(0)h+\frac{1}{2}f^{\prime\prime}(\theta_{h})h^{2}$.
Therefore, 
\begin{align*}
E\left[\frac{f(h)-f(0)}{h^{2}};|X|\leq M\right] & =\frac{1}{2}E\left[f^{\prime\prime}(\theta_{h});|X|\leq M\right]\\
 & \geq E\left[\frac{8X^{2}Y^{4}+63X^{2}Y^{2}Z^{2}}{4\left(2X^{2}+h\left|XYZ\right|+h^{2}\left(Y^{4}+8Y^{2}Z^{2}\right)\right)^{3/2}};|X|\leq M\right]
\end{align*}
By monotone convergence, the RHS goes to $\infty$ as $h\rightarrow0$
(note that $E\left[\left|X\right|^{-1};\left|X\right|\leq M\right]=\infty$).
Let $h=n^{-1/2}$, we get \eqref{eq: batch K2 lim}.

To show that the coverage error of sectioning is $o(n^{-1/2})$, from
\eqref{eq: sec K2 er2} it suffices to show that 
\[
\left|E\left[\phi(F_{+})(F_{+}^{(n)}-F_{-}^{(n)}-2F_{+})\right]\right|=o(n^{-1/2}).
\]
From the boundness of $\phi(F_{+})$ and the positiveness of the conditional
distribution on $B_{1,1}$, it suffices to show that $E\left[F_{+}^{(n)}-F_{-}^{(n)}-2F_{+}\right]=o(n^{-1/2})$,
which can be restated as: for independent $X,Y,Z\sim N(0,1)$, we
want to show that
\[
\lim_{n\rightarrow\infty}\sqrt{n}\mathbb{E}\left[\sqrt{2X^{2}+n^{-1/2}XYZ+n^{-1}\left(Y^{4}+8Y^{2}Z^{2}\right)}-\sqrt{2X^{2}}\right]=0
\]
Or equivalently
\[
\lim_{n\rightarrow\infty}E\left[\frac{XYZ+n^{-1/2}\left(Y^{4}+8Y^{2}Z^{2}\right)}{\sqrt{2X^{2}+n^{-1/2}XYZ+n^{-1}\left(Y^{4}+8Y^{2}Z^{2}\right)}+\sqrt{2X^{2}}}\right]=0
\]
This can be shown by dominated convergence with dominating function
given by $\frac{\left|YZ\right|}{\sqrt{2}}+2\sqrt{Y^{4}+8Y^{2}Z^{2}}$
(see the derivation of \eqref{eq: domi func}).

\end{proof}

\begin{proof}[Proof of Theorem \ref{thm: validity dependent}]
Let $\bar{g}_k$ denote the average of $g(X_i)$ for all $X_i$ in the $k$-th section. From the mixing condition, since the gap between sections is of order $n^{\delta}$, we have that for each $k_1\neq k_2$ and measurable sets $A_{k_1},A_{k_2}$, $\left|P(\bar{g}_{k_1}\in A_{k_1},\bar{g}_{k_2}\in A_{k_2})-P(\bar{g}_{k_1}\in A_{k_1})P(\bar{g}_{k_2}\in A_{k_2})\right|\leq \alpha(n^{\delta})=o(n^{-(r+1)/2})$. It is not hard to see that by applying this for $K-1$ times, we will get that for any $A_1,A_2,\dots,A_K$, we have that 
\begin{equation}\label{eq: K weak dependence}
\left|P(\bar{g}_1\in A_1,\bar{g}_2\in A_2,\dots,\bar{g}_K\in A_K)-P(\bar{g}_1\in A_1)\dots P(\bar{g}_K\in A_K)\right|\leq K\alpha(n^\delta)=o(n^{-(r+1)/2}).
\end{equation}
Therefore, to study the joint distribution of $\bar{g}_1,\dots,\bar{g}_K$, it suffices to look at the marginal distributions.

For these marginal distributions, by Theorem 1 of \cite{jensen1989asymtotic}, we have that 
\begin{equation}\label{eq: Edgeworth dependent}
P(\sqrt{n}(\bar{g}_1-Eg(X_1))\in B) = \int_B \phi_{\tilde{\Sigma}}(x)\sum_{j=1}^{r}n^{j/2}q_j(x)dx + O(n^{-(r+1)/2}) 
\end{equation}
uniformly over $B$ satisfying $\phi_{\tilde{\Sigma}}((\delta B)^\epsilon) <c\epsilon$ where $(\delta B)^{\epsilon}:=\{x: d(x,B)<\epsilon,x\notin B\}$ (heuristically, this means the area of the boundary of $B$ is bounded) and $q_j,j=1,2,\dots,r$ are polynomials. Combining this with \eqref{eq: K weak dependence}, and doing integration, we again get \eqref{eq: Edgeworth_approx} with $A_i$ replaced by $\sqrt{n}(\bar{g}_i-Eg(X_1))$. Therefore, following the proof of Theorem \ref{thm: validity}, we get the desired result.
\end{proof}

\begin{proof}[Proof of Theorem \ref{thm: validity dependent 2}]
The proof for the first part is almost identical to the proof of Theorem \ref{thm: validity dependent}. The only difference is that we use the Edgeworth expansion result in \cite{malinovskii1987limit} to conclude that (here all probabilities are taken under the stationary measure)
\begin{equation*}
    \sup_{x\in R}\left|P\left(\frac{\sqrt{a}}{\sigma_F\Sigma_{f,n}}\leq x\right)-\Phi(x)-\sum_{j=1}^rn^{-j/2}\tilde{q}_j(x)\phi(x)\right|=O(n^{-(r+1)/2}).
\end{equation*}
Using the above expansion to take the place of \eqref{eq: Edgeworth dependent}, and proceed as in the proof of Theorem \ref{thm: validity dependent}, we get the desired result. 

For the second part, we note that the coeffcient for the $n^{-1/2}$ term in the Edgeworth expansion given in \cite{malinovskii1987limit} (more precisely, the polynomial $q_1(x)$ in its Theorem 1) is an even polynomial. Therefore, with the same oddness and evenness argument as in the proof of Theorem \ref{thm: validity}, we get the claimed result.
\end{proof}

\begin{proof}[Proof of Theorem \ref{prop: alg correctness}]
Based on the approximation $W=\sqrt{K(K-1)}\frac{[u,A_{0}]+n^{-1/2}b_{1}+n^{-1}b_{2}}{\sqrt{E_{2}+n^{-1/2}\lambda+n^{-1}e}}+O_p(n^{-3/2})$, we may further do Taylor expansion and get that \begin{align*}
W & =\sqrt{K(K-1)}\frac{[u,A_{0}]+n^{-1/2}b_{1}+n^{-1}b_{2}}{\sqrt{E_{2}+n^{-1/2}\lambda+n^{-1}e}}\\
 & =\sqrt{K(K-1)}\frac{[u,A_{0}]+n^{-1/2}b_{1}+n^{-1}b_{2}}{\sqrt{E_{2}}}\left(1+n^{-1/2}\frac{\lambda}{E_{2}}+n^{-1}\frac{e}{E_{2}}\right)^{-1/2}\\
 & =\sqrt{K(K-1)}\frac{[u,A_{0}]+n^{-1/2}b_{1}+n^{-1}b_{2}}{\sqrt{E_{2}}}\left(1-\frac{1}{2}n^{-1/2}\frac{\lambda}{E_{2}}-\frac{1}{2}n^{-1}\frac{e}{E_{2}}+\frac{3}{8}n^{-1}\frac{\lambda^{2}}{E_{2}^{2}}\right)\\
 & =\frac{\sqrt{K(K-1)}}{\sqrt{E_{2}}}\left[a\left(1-\frac{1}{2}n^{-1/2}\frac{\lambda}{E_{2}}-\frac{1}{2}n^{-1}\frac{e}{E_{2}}+\frac{3}{8}n^{-1}\frac{\lambda^{2}}{E_{2}^{2}}\right)+b_{1}n^{-1/2}-b_{1}n^{-1}\frac{1}{2}\frac{\lambda}{c}+b_{2}n^{-1}\right]
\end{align*}

Let $F=\frac{W}{\sqrt{K(K-1)}}$. Let $F_{+}$ and $F_{-}$ denote the value of $A_{0,1}$ such that the value of $W$ is equal to $q$ and $-q$ when $n=\infty$, respectively. Then indeed we have that $F_{+}=\frac{q\sqrt{E_2}}{\sqrt{K(K-1)}}-\sum_{i=2}^d u_i A_{0,i}$ and $F_{-}=\frac{-q\sqrt{E_2}}{\sqrt{K(K-1)}}-\sum_{i=2}^d u_i A_{0,i}$. Denote $F_x,F_y$ as the derivative of $F$ w.r.t. $n^{-1/2}$ and $A_{0,1}$ respectively when $n^{-1/2}=0,A_{0,1}=F_{+}$. Let $F_{xx},F_{xy},F_{yy}$ be the corresponding second-order derivatives (note that we actually have $F_{yy}=0$ since when $n^{-1/2}=0$, the only term is $\sqrt{K(K-1)}\frac{[u,A_0]}{\sqrt{E_2}}$ which is linear in $A_{0,1}$). 
Then $F_{x}=\frac{b_{1}}{\sqrt{E_{2}}}-\frac{1}{2}\frac{\lambda a}{E_{2}\sqrt{E_{2}}},F_{xx}=\frac{1}{\sqrt{E_{2}}}\left(a\left(-\frac{e}{E_{2}}+\frac{3}{4}\frac{\lambda^{2}}{E_{2}^{2}}\right)-b_{1}\frac{\lambda}{c}+2b_{2}\right).$
\[
F_{y}=\frac{a^{\prime}}{\sqrt{E_{2}}}=\frac{u_{1}}{\sqrt{E_{2}}}.
\]
\[
F_{xy}=\frac{b_{1}^{\prime}}{\sqrt{E_{2}}}-\frac{1}{2}\frac{\left(\lambda a\right)^{\prime}}{E_{2}\sqrt{E_{2}}}.
\]

Therefore, 
\[
F_{x}+F_{y}y_{x}=0\Rightarrow y_{x}=-F_{x}/F_{y}
\]
\[
F_{xx}+2F_{xy}y_{x}+F_{yy}y_{x}^{2}+F_{y}y_{xx}=0\Rightarrow y_{xx}=-\left(F_{xx}+2F_{xy}y_{x}\right)/F_{y}
\]
By a second-order Taylor expansion,
we have that when $W = q$,
\[
A_{0,1} = F_{+}^{(n)}=F_{+}+n^{-1/2}y_{x}+\frac{1}{2}n^{-1}y_{xx}+O_p(n^{-3/2})
\]
and similarly when $W = -q$,
\[
A_{0,1} = F_{-}^{(n)}=F_{-}+n^{-1/2}y_{x}^{(-)}+\frac{1}{2}n^{-1}y_{xx}^{(-)}+O_p(n^{-3/2})
\]
where $y_x^{-}$ and $y_{xx}^{(-)}$ are the counterpart of the derivatives at $(n^{-1/2},A_{0,1})=(0,F_{-})$.
Then, we have that
\begin{align*}
P(-q & \leq W\leq q)=\int\int_{F_{-}^{(n)}}^{F_{+}^{(n)}}\phi_{\tilde{\Sigma}}(a)\left(1+\sum_{j=1}^{2}n^{-j/2}p_{j,A}(a)\right)da_{0,1}da^{\prime}+O(n^{-3/2})\\
 & =\int\int_{F_{-}}^{F_{+}}\phi_{\tilde{\Sigma}}(a)\left(1+\sum_{j=1}^{2}n^{-j/2}p_{j,A}(a)\right)da_{0,1}da^{\prime}\\
 & +\int\left(\begin{array}{c}
\left[\phi_{\tilde{\Sigma}}(a)\left(1+n^{-1/2}p_{1,A}(a)\right)\right]|_{a_{0,1}=F_{+}}\left(n^{-1/2}y_{x}+\frac{1}{2}n^{-1}y_{xx}\right)\\
-\left[\phi_{\tilde{\Sigma}}(a)\left(1+n^{-1/2}p_{1,A}(a)\right)\right]|_{a_{0,1}=F_{-}}\left(n^{-1/2}y_{x}^{(-)}+\frac{1}{2}n^{-1}y_{xx}^{(-)}\right)
\end{array}\right)da^{\prime}\\
 & +\frac{1}{2}n^{-1}\int\left(\begin{array}{c}
\frac{\partial}{\partial a_{0,1}}\left[\phi_{\tilde{\Sigma}}(a)\right]|_{a_{0,1}=F_{+}}y_{x}^{2}\\
-\frac{\partial}{\partial a_{0,1}}\left[\phi_{\tilde{\Sigma}}(a)\right]|_{a_{0,1}=F_{-}}y_{x}^{(-)2}
\end{array}\right)da^{\prime}+O(n^{-3/2}).
\end{align*}
Hence, the $n^{-1}$ error term is given by
\begin{align}
 & \int\int_{F_{-}}^{F_{+}}\phi_{\tilde{\Sigma}}(a)\left(p_{2,A}(a)\right)da_{0,1}da^{\prime}+\int\left(\begin{array}{c}
\left[\phi_{\tilde{\Sigma}}(a)\left(p_{1,A}(a)\right)\right]|_{a_{0,1}=F_{+}}\left(y_{x}\right)\\
-\left[\phi_{\tilde{\Sigma}}(a)\left(p_{1,A}(a)\right)\right]|_{a_{0,1}=F_{-}}\left(y_{x}^{(-)}\right)
\end{array}\right)da^{\prime}\nonumber \\
 & +\int\left(\begin{array}{c}
\left[\phi_{\tilde{\Sigma}}(a)\right]|_{a_{0,1}=F_{+}}\left(\frac{1}{2}y_{xx}\right)\\
-\left[\phi_{\tilde{\Sigma}}(a)\right]|_{a_{0,1}=F_{-}}\left(\frac{1}{2}y_{xx}^{(-)}\right)
\end{array}\right)da^{\prime}+\frac{1}{2}\int\left(\begin{array}{c}
\frac{\partial}{\partial a_{0,1}}\left[\phi_{\tilde{\Sigma}}(a)\right]|_{a_{0,1}=F_{+}}y_{x}^{2}\\
-\frac{\partial}{\partial a_{0,1}}\left[\phi_{\tilde{\Sigma}}(a)\right]|_{a_{0,1}=F_{-}}y_{x}^{(-)2}
\end{array}\right)da^{\prime}\label{eq: error4sum}
\end{align}

Note that the joint density of $\left(\sqrt{n}\left(X_{1}-m\right),\dots,\sqrt{n}\left(X_{K}-m\right)\right)=(A_{0}+B_{1},A_{0}+B_{2},\dots,A_{0}+B_{K-1},A_{0}-B_{1}-B_{2}-\dots-B_{K-1})$
is given by 
\[
p(x)=\phi_{\sigma}(x_{1})\dots\phi_{\sigma}(x_{K})\prod_{i}(1+n^{-1/2}p_{1}(x_{i})+n^{-1}p_{2}(x_{i}))
\]
The formulas for $p_{1}(x)$ and $p_{2}(x)$, given in \eqref{eq: Edgeworth formula}, can be found in \cite{skovgaard1986multivariate}, p172.
Therefore, after a linear transformation,
we get that the density of $A=(A_{0},B_{1},\dots,B_{K-1})$ is
\[
p_{A}(a)=p(x(a))\left|det\left(\frac{Dx}{Da}\right)\right|=Kp(x(a))=\phi_{\Sigma}(a)\prod_{i}(1+n^{-1/2}p_{1}(x_{i}(a))+n^{-1}p_{2}(x_{i}(a))) + O(n^{-3/2}).
\]
(Note that, since $\frac{Dx}{Da}=\left(\begin{array}{ccccc}
1 & 1\\
1 &  & 1\\
\dots &  &  & \dots\\
1 &  &  &  & 1\\
1 & -1 & -1 & \dots & -1
\end{array}\right),$ we have that its determinant is $K$.) This gives $p_{1}(a)=p_{1}(x_{1}(a))+\dots+p_{1}(x_{K}(a))$,
and $p_{2}(a)=\sum_{1\leq i<j\leq K}p_{1}(x_{i}(a))p_{1}(x_{j}(a))+\sum_{1\leq i\leq K}p_{2}(x_{i}(a))$.
Therefore, the first term in (\ref{eq: error4sum}) is
\[
E_{C}p_{2}(x)=E_{C}\left[\sum_{1\leq i<j\leq K}p_{1}(x_{i}(A))p_{1}(x_{j}(A))+\sum_{1\leq i\leq K}p_{2}(x_{i}(A))\right]
\]
where C represents the area where
\[
-q\leq\sqrt{K(K-1)}\frac{[u,A_{0}]}{\sqrt{\sum_{i=1}^{K}\left(\begin{array}{c}
[u,A_{i}-A_{0}]\end{array}\right)^{2}}}\leq q.
\]
For the second term of (\ref{eq: error4sum}), we have that

\[
\int\left(\begin{array}{c}
\left[\phi_{\tilde{\Sigma}}(a)\left(p_{1}(a)\right)\right]|_{a_{0,1}=F_{+}}\left(y_{x}\right)\\
-\left[\phi_{\tilde{\Sigma}}(a)\left(p_{1}(a)\right)\right]|_{a_{0,1}=F_{-}}\left(y_{x}^{(-)}\right)
\end{array}\right)da^{\prime}=E\left(\begin{array}{c}
\phi_{\tilde{\sigma}_{0}}(F_{+}-\mu)\left(p_{1}(a)\right)|_{a_{0,1}=F_{+}}\left(y_{x}\right)\\
-\phi_{\tilde{\sigma}_{0}}(F_{-}-\mu)\left(p_{1}(a)\right)|_{a_{0,1}=F_{-}}\left(y_{x}^{(-)}\right)
\end{array}\right)
\]
Here $\tilde{\sigma}_{0}=\left(\sigma_{0}-\sigma_{01}\sigma_{11}^{-1}\sigma_{10}\right)/K$,
$\sigma_{0}$ is the variance of the first dimension of $X$. $\mu=\sigma_{01}\sigma_{11}^{-1}a_{0}^{\prime}$.
The expectation is taken under the limiting normal distribution of
$a^{\prime}$. We explain why the above is true. Observe that $\phi_{\tilde{\sigma}_{0}}\left(\cdot-\mu\right)$
is the conditional density of $A_{0,1}$ given $A^{\prime}$ (under
their joint limiting distribution), which implies $\phi_{\Sigma}(a)=\phi_{\tilde{\sigma}_{0}}(F_{+}-\mu)\phi_{\Sigma^{\prime}}(a^{\prime})$.
Moreover, it follows from the definition of expectation that $\int\left[\phi_{\Sigma^{\prime}}(a^{\prime})g(a^{\prime})\right]da^{\prime}=Eg(A^{\prime})$
where $A^{\prime}$ follows from its limiting normal distribution.
Therefore, we have the preceding displayed equality. Similar argument
holds for the third and last terms of (\ref{eq: error4sum}). Therefore,
the summation of the last three terms of (\ref{eq: error4sum}) is
\begin{align*}
 & E\left(\begin{array}{c}
\phi_{\tilde{\sigma}_{0}}(F_{+}-\mu)\left(p_{1}(a)\right)|_{a_{0,1}=F_{+}}\left(y_{x}\right)\\
-\phi_{\tilde{\sigma}_{0}}(F_{-}-\mu)\left(p_{1}(a)\right)|_{a_{0,1}=F_{-}}\left(y_{x}^{(-)}\right)
\end{array}\right)+E\left(\begin{array}{c}
\phi_{\tilde{\sigma}_{0}}(F_{+}-\mu)\left(\frac{1}{2}y_{xx}\right)\\
-\phi_{\tilde{\sigma}_{0}}(F_{-}-\mu)\left(\frac{1}{2}y_{xx}^{(-)}\right)
\end{array}\right)\\
 & +\frac{1}{2}E\left(\begin{array}{c}
\phi_{\tilde{\sigma}_{0}}(F_{+}-\mu)(-\left(F_{+}-\mu\right)/\tilde{\sigma}_{0})y_{x}^{2}\\
-\phi_{\tilde{\sigma}_{0}}^{\prime}(F_{-}-\mu)(-\left(F_{-}-\mu\right)/\tilde{\sigma}_{0})y_{x}^{(-)2}
\end{array}\right)
\end{align*}
 Here, the expectation is taken under the asymptotic normal distribution
of $A^{\prime}$.
\end{proof}

\begin{proof}[Proof of Proposition \ref{prop: batching small K}]
For batching, the event $P(-q\leq W_{B}\leq q)$ could be written
as (recall that $X_{i},Y_{i},i=1,2$ corresponds to the scaled batch
averages and has standard normal distribution, and are independent)
\[
-\frac{q}{\sqrt{2}}\leq\frac{\frac{X_{1}+X_{2}}{2}+n^{-1/2}\frac{Y_{1}^{2}+Y_{2}^{2}}{2}}{\left|\frac{X_{1}-X_{2}}{2}\right|\left|1+n^{-1/2}\frac{Y_{1}^{2}-Y_{2}^{2}}{X_{1}-X_{2}}\right|}\leq\frac{q}{\sqrt{2}}
\]
the critical points for $(X_{1}+X_{2})/2$ (which corresponds to $F_{+}^{n}$ and $F_{-}^{n}$ in the proof of Theorem \ref{thm: validity}) are given by
\[
\frac{q}{\sqrt{2}}\left|\frac{X_{1}-X_{2}}{2}\right|\left|1+n^{-1/2}\frac{Y_{1}^{2}-Y_{2}^{2}}{X_{1}-X_{2}}\right|-n^{-1/2}\frac{Y_{1}^{2}+Y_{2}^{2}}{2}
\]
and 
\[
-\frac{q}{\sqrt{2}}\left|\frac{X_{1}-X_{2}}{2}\right|\left|1+n^{-1/2}\frac{Y_{1}^{2}-Y_{2}^{2}}{X_{1}-X_{2}}\right|-n^{-1/2}\frac{Y_{1}^{2}+Y_{2}^{2}}{2}
\]

However, when $K=2$, since the probability that $1+n^{-1/2}\frac{Y_1^2-Y_2^2}{X_1-X_2}>0$ is not $o(n^{-1})$, the above can not be approximated by (with error
$o_{p}(n^{-1})$) dropping the absolute value whose resulting value
is
\[
\frac{q}{\sqrt{2}}\left|\frac{X_{1}-X_{2}}{2}\right|\left(1+n^{-1/2}\frac{Y_{1}^{2}-Y_{2}^{2}}{X_{1}-X_{2}}\right)-n^{-1/2}\frac{Y_{1}^{2}+Y_{2}^{2}}{2}
\]
and 
\[
-\frac{q}{\sqrt{2}}\left|\frac{X_{1}-X_{2}}{2}\right|\left(1+n^{-1/2}\frac{Y_{1}^{2}-Y_{2}^{2}}{X_{1}-X_{2}}\right)-n^{-1/2}\frac{Y_{1}^{2}+Y_{2}^{2}}{2}.
\]
On the other hand, dropping the absolute value operation exactly corresponds
to what is done using the current expansion scheme:
\begin{align*}
\left|1+n^{-1/2}\frac{Y_{1}^{2}-Y_{2}^{2}}{X_{1}-X_{2}}\right| & =\sqrt{1+2n^{-1/2}\frac{Y_{1}^{2}-Y_{2}^{2}}{X_{1}-X_{2}}+n^{-1}\left(\frac{Y_{1}^{2}-Y_{2}^{2}}{X_{1}-X_{2}}\right)^{2}}\\
 & =1+n^{-1/2}\frac{Y_{1}^{2}-Y_{2}^{2}}{X_{1}-X_{2}}+0+M\frac{n^{-3/2}}{\left(X_{1}-X_{2}\right)^{3}}.
\end{align*}
Though the residual has a coefficient $n^{-3/2}$, its contribution is actually larger since $\frac{1}{(X_1-X_2)^3}$ does not have finite expectation.
\end{proof}

\begin{proof}[Proof of Theorem \ref{thm: K_large}]
As $K\rightarrow\infty$, $S_{\text{batch}}\rightarrow \sqrt{n\text{Var}\psi(\hat{P}_1)}$. But $\sqrt{nK}\left(\frac{1}{K}\sum_{i}\psi\left(\hat{P}_{i}\right)-\psi\right)\rightarrow\text{sign}(E\psi(\hat{P}_{1})-\psi)\cdot\infty$. Thus $W_B$ either converges to $\infty$ or $-\infty$ which implies $P(-q\leq W_B\leq q)\rightarrow 0$. Similarly, since $$\left|S_{\text{sec}}^2-\frac{1}{K-1}\sum_{i=1}^{K}\left(\psi\left(\hat{P}_{i}\right)-\psi\right)^{2}\right|\leq \frac{K}{K-1}(\psi(\hat{P})-\psi)^2=o_p(1),$$ {by the strong law of large numbers and Slutsky's theorem we have} $S_{\text{sec}}\rightarrow \sqrt{E(\psi(\hat{P}_1)-\psi)^2}$ ({as $K\rightarrow\infty$}). Also note that by the differentiability of $\psi$ implies the asymptotic result $\sqrt{nK}\left(\psi(\hat{P})-\psi\right)\Rightarrow N(0,\sigma^{2})$. Therefore, we get the claim for $P(-q\leq W_S \leq q)$. The claim that $P(-q\leq W_{SJ}\leq q)\rightarrow \Phi(q)-\Phi(-q)$ can be seen by regarding each batch as a multidimensional data and applying the consistency of the usual jackknife (e.g. Theorem 2.3 of \cite{Shao1995}).
\end{proof}

\section{Edgeworth Expansion for Smooth Function Models}
For completeness, we introduce the Edgeworth expansion result in \cite{bhattacharya2010normal}. Let $M_{s}(f):=\begin{cases}
\sup_{x\in R^{k}}(1+\left\Vert x\right\Vert ^{s})^{-1}\left|f(x)\right| & s>0\\
\omega_{f}(R^{k})=\sup_{x,y\in R^{k}}\left|f(x)-f(y)\right| & s=0
\end{cases}.$ Let $\omega_{f}(\epsilon,x):=\sup_{d(x^{\prime},x)\leq\epsilon}f(x^{\prime})-\inf_{d(x^{\prime},x)\leq\epsilon}f(x^{\prime})$,\\ $\bar{\omega}_{f}(\epsilon,\mu):=\int\omega_{f}(\epsilon,x)d\mu(x)$. Heuristically, $\omega_{f}(\epsilon,x)$ measures the oscillation of $f(x)$ in a $\epsilon$-neighborhood of $x$, which is then averaged over $x$ w.r.t. measure $\mu$ to give $\bar{\omega}_{f}(\epsilon,\mu)$.
\begin{thm}\label{thm: Edgeworth}
Let $\left\{X_{{n}}: {n}>1\right\}$ be an i.i.d. sequence of random vectors with values in $\mathbb{R}^{{k}}$ whose common distribution ${Q}_{1}$ satisfies Cramer's condition \eqref{eq: cramer}. Assume that ${Q}_{1}$ has mean zero and a finite $s$-th absolute moment for some integer ${s}>3 .$ Let ${V}$ denote the covariance matrix of ${Q}_{1}$ and $\chi_{\nu}$ its $v t h$ cumulant $(3<|\nu|<{s})$. Then for every real-valued, Borel-measurable function f on $R^{k}$ satisfying
$$
M_{s^{\prime}}(f)<\infty
$$
for some ${s}^{\prime}, 0 \leqslant {~s}^{\prime} \leqslant {s}$, one has
$$
\begin{array}{l}
\left|\int f d\left(Q_{n}-\sum_{r=0}^{s-2} n^{-j / 2} p_{j}\left(-\Phi_{0, V}:\left\{\chi_{\nu}\right\}\right)\right)\right| \\
<M_{s^{\prime}}(f) \delta_{1}(n)+c(s, k) \bar{\omega}_{f}\left(2 e^{-\gamma n}: \Phi_{0, V}\right)
\end{array}
$$
where ${Q}_{{n}}$ is the distribution of ${n}^{-1 / 2}\left({X}_{1}+\cdots+{X}_{{n}}\right), \gamma$ is a suitable positive constant, and
$$
\delta_{1}(n)=o\left(n^{-(s-2) / 2}\right), \quad(n \rightarrow \infty)
$$
Moreover, $c(s, k)$ depends only on s and ${k}$, and the quantities $\gamma, \delta_{1}({n})$ do not depend on ${f}$.
\end{thm}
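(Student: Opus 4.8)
The plan is to reproduce the classical Fourier-analytic argument of \cite{bhattacharya2010normal}, whose core idea is to compare $Q_n$ with the signed measure $\xi_n:=\sum_{r=0}^{s-2}n^{-r/2}P_r(-\Phi_{0,V};\{\chi_\nu\})$ at the level of characteristic functions, and then transfer this frequency-domain control to the functional $f\mapsto\int f\,dQ_n$ by means of a smoothing inequality. The only structural hypotheses used are Cramer's condition \eqref{eq: cramer}, the finite $s$-th absolute moment, and the mild regularity of $f$ encoded through $M_{s'}(f)$ and the oscillation $\bar\omega_f$.

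First I would expand the characteristic function. Writing $\hat Q_1$ for the characteristic function of $Q_1$, the finiteness of moments up to order $s$ lets me Taylor-expand $\log\hat Q_1(t/\sqrt n)$ near the origin in terms of the cumulants $\chi_\nu$; raising to the $n$-th power and regrouping by powers of $n^{-1/2}$ gives, for $\|t\|$ in a central region,
$$\hat Q_n(t)=e^{-\frac12 t^\top V t}\Big(1+\sum_{r=1}^{s-2}n^{-r/2}\pi_r(it)\Big)+R_n(t),$$
where the polynomials $\pi_r$ are chosen so that $e^{-\frac12 t^\top V t}\pi_r(it)$ is exactly the Fourier--Stieltjes transform of the Edgeworth term $P_r(-\Phi_{0,V};\{\chi_\nu\})$, and the Taylor remainder satisfies $\int_{\|t\|\le c\sqrt n}\|t\|^{-1}|R_n(t)|\,dt=o(n^{-(s-2)/2})$. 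This estimate is the source of the factor $\delta_1(n)$.

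Next I would split the frequency axis into a central part $\|t\|\le c\sqrt n$ and a tail $\|t\|>c\sqrt n$. On the central part, inserting the remainder bound into an Esseen-type smoothing inequality produces the $M_{s'}(f)\delta_1(n)$ contribution, the factor $M_{s'}(f)$ arising because the smoothing inequality charges the deviation of $f$ from its smoothed version against the polynomial-growth envelope of $f$. On the tail, Cramer's condition supplies a constant $\rho:=\sup_{\|\theta\|\ge\varepsilon}|\hat Q_1(\theta)|<1$, whence $|\hat Q_n(t)|=|\hat Q_1(t/\sqrt n)|^{n}$ is exponentially small once $\|t\|\ge\varepsilon\sqrt n$; this is precisely what forces the smoothing radius to be chosen of size $2e^{-\gamma n}$ and hence contributes the term $c(s,k)\,\bar\omega_f(2e^{-\gamma n}:\Phi_{0,V})$. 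Adding the two contributions yields the stated inequality, with $c(s,k)$ depending only on $s,k$ and with $\gamma,\delta_1(n)$ independent of $f$ by construction.

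The main obstacle is the generality of $f$: it is only Borel measurable with at most polynomial growth controlled by $M_{s'}(f)$, so smoothness or integration-by-parts is unavailable, and one must instead argue entirely through the averaged oscillation. The delicate technical heart is therefore the smoothing inequality valid for this function class, which convolves the measures with a compactly supported smooth kernel of width $\sim e^{-\gamma n}$ and bounds the convolution error by $\bar\omega_f(2e^{-\gamma n}:\Phi_{0,V})$. Making this work requires coordinating the kernel width with the exponentially small tail estimate, and dominating the oscillation by the Gaussian $\Phi_{0,V}$ so that the polynomial envelope remains integrable; carrying out these coordinated choices while keeping every constant independent of $f$ is the crux of the argument.
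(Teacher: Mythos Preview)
The paper does not prove this theorem. It is quoted verbatim from \cite{bhattacharya2010normal} (Theorem 20.1) purely for the reader's convenience, with the prefatory sentence ``For completeness, we introduce the Edgeworth expansion result in \cite{bhattacharya2010normal}.'' There is therefore no proof in the paper against which to compare your proposal.

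That said, your sketch is a faithful outline of the Bhattacharya--Rao argument: cumulant expansion of the characteristic function on a central ball, Cram\'er's condition to kill the tail $\|t\|>c\sqrt n$ at an exponential rate, and the smoothing inequality (their Corollary 11.5 / Lemma 11.6) to pass from Fourier control to control of $\int f\,d(Q_n-\xi_n)$ for merely measurable $f$ with polynomial growth, with the oscillation term $\bar\omega_f$ absorbing the mollification error. For the purposes of the present paper nothing more is needed than the statement itself, which is invoked in Section~\ref{subsec: edgeworth details} to justify \eqref{eq: Edgeworth_approx}.
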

\end{document}